\newcommand{\andf}{\quad\hbox{and}\quad}
\newcommand{\with}{\quad\hbox{with}\quad}
\def\Supp{\mathop{\rm Supp}\nolimits\ }
\newcommand{\newcom}{\newcommand}
\def\longformule#1#2{
\displaylines{ \qquad{#1} \hfill\cr \hfill {#2} \qquad\cr } }
\def\inte#1{
\displaystyle\mathop{#1\kern0pt}^\circ }
\newcom{\al}{\alpha}
\newcom{\de}{\delta}
\newcom{\Th}{\Theta}
\newcom{\be}{\beta}
\newcom{\s}{\sigma}
\newcom{\eps}{\epsilon}
\newcom{\ve}{\varepsilon}
\newcom{\ga}{\gamma}
\newcom{\Ga}{\Gamma}
\newcom{\ka}{\kappa}
\newcom{\Lam}{\Lambda}
\newcom{\lam}{\lambda}
\newcom{\vp}{\varphi}
\newcom{\om}{\omega}
\newcom{\Sig}{\Sigma}
\newcom{\sig}{\sigma}
\newcom{\tht}{\theta}
\newcom{\tri}{\triangle}
\newcom{\oo}{\infty}
\newcom{\h}{{\rm h}}
\newcom{\vphi}{\varphi}
\newcom{\cB}{{\mathcal B}}
\newcom{\cC}{{\mathcal C}}
\newcom{\cD}{{\mathcal D}}
\newcom{\cF}{{\mathcal F}}
\newcom{\cL}{{\mathcal L}}
\newcom{\cM}{{\mathcal M}}
\newcom{\cP}{{\mathcal P}}
\newcom{\cS}{{\mathcal S}}
\newcom{\cQ}{{\mathcal Q}}
\newcom{\cT}{{\mathcal T}}
\newcom{\cY}{{\mathcal Y}}
\newcom{\cZ}{{\mathcal Z}}
\newcom{\R}{\Bbb R}
\newcom{\T}{\Bbb T}
\newcom{\N}{\Bbb N}
\newcom{\Z}{\Bbb Z}
\newcom{\C}{\Bbb C}
\newcom{\E}{\Bbb E}
\let\wh=\widehat
\def\dive{\mathop{\rm div}\nolimits}
\let\e=\varepsilon
\def\ue{u^\varepsilon}
\def\ve{v^\varepsilon}
\def\up{u_{\Psi}}
\def\vp{v_{\Psi}}
\def\ekt{e^{\frak{K}t}}
\def\ektp{e^{\frak{K}t'}}
\newcom{\f}{\frac}
\newcom{\dint}{\displaystyle\int}
\newcom{\dsum}{\displaystyle\sum}
\newcom{\dlim}{\displaystyle\lim}
\newcom{\ov}{\overline}
\newcom{\wt}{\widetilde}
\newcom{\pa}{\partial}
\newcom{\p}{\partial}
\newcom\na{\nabla}
\newcom{\D}{\Delta}
\newcom\rto{\rightarrow}
\newcom\lto{\leftarrow}
\newcom\mto{\mapsto}
\newcom{\disp}{\displaystyle}
\newcom{\non}{\nonumber}
\newcom{\no}{\noindent}
\newcom{\QED}{$\square$}
\def\eqdefa{\buildrel\hbox{\footnotesize def}\over =}
\newcommand{\beq}{\begin{equation}}
\newcommand{\eeq}{\end{equation}}
\newcommand{\ben}{\begin{eqnarray}}
\newcommand{\een}{\end{eqnarray}}
\newcommand{\beno}{\begin{eqnarray*}}
\newcommand{\eeno}{\end{eqnarray*}}
\newtheorem{Def}{Definition}[section]
\newtheorem{thm}{Theorem}[section]
\newtheorem{lem}{Lemma}[section]
\newtheorem{rmk}{Remark}[section]
\newtheorem{prop}{Proposition}[section]
\renewcommand{\theequation}{\thesection.\arabic{equation}}
\begin{document}
\title[Hydrostatic approximation of the Navier-Stokes equations]
{On the hydrostatic approximation of the Navier-Stokes equations in a thin strip}

\author{Marius Paicu}
\address{Universit\'e  Bordeaux \\
 Institut de Math\'ematiques de Bordeaux\\
F-33405 Talence Cedex, France} \email{marius.paicu@math.u-bordeaux.fr}

\author{Ping Zhang}%
\address{Academy of
Mathematics $\&$ Systems Science and  Hua Loo-Keng Key Laboratory of
Mathematics, The Chinese Academy of Sciences, Beijing 100190, China, and School of Mathematical Sciences,
University of Chinese Academy of Sciences, Beijing 100049, China.
} \email{zp@amss.ac.cn}

\author{Zhifei Zhang}
\address{School of  Mathematical Science, Peking University, Beijing 100871,
P. R. CHINA} \email{zfzhang@math.pku.edu.cn}

\date{\today}

\begin{abstract}
In this paper, we first prove the global well-posedness of a scaled anisotropic  Navier-Stokes system  and the hydrostatic Navier-Stokes system in a 2-D striped  domain with small analytic data in the tangential variable. Then we justify the limit from the anisotropic Navier-Stokes system to the hydrostatic Navier-Stokes system
with analytic data.
\end{abstract}
\maketitle

\noindent {\sl Keywords:}  Incompressible Navier-Stokes Equations, Hydrostatic approximation,

\qquad\qquad Radius of analyticity.

\vskip 0.2cm
\noindent {\sl AMS Subject Classification (2000):} 35Q30, 76D03  \

\renewcommand{\theequation}{\thesection.\arabic{equation}}
\setcounter{equation}{0}
\section{Introduction}\label{sect1}

This paper is concerned with the study of the Navier-Stokes system in a thin-striped domain  and
 the hydrostatic approximation of these equations when the depth of the domain and the viscosity  converge to zero
  simultaneously in a related way.  This is a classical model in geophysical fluid dynamics where  the vertical dimension of the domain is very small compared with the  horizontal  dimension of the domain. In this case, the viscosity is not isotropic and we have to use the anisotropic Navier-Stokes system with a ``turbulent" viscosity.  The formal limit thus obtained is the hydrostatic Navier-Stokes equations which are currently used as a standard model to describes the atmospheric flows and also oceanic flows in oceanography (see \cite{Pe, Pi}).

\smallskip

When we consider Dirichlet boundary conditions on the top and the  bottom of a 2-D striped domain, we are able to prove the global well-posedness
of both the anisotropic Navier-Stokes system and  the hydrostatic/Prandtl approximate equations when the initial data is small and analytic in the tangential variable. This should be regarded as a global Cauchy-Kowalevskaya theorem for small analytic data, which originates from \cite{Ch04}. The proof of this type of
 results requires the  control of the loss of the radius of the analyticity of the solution. Taking the advantage of the Poincar\'e inequality in the the strip, we are able to control the analyticity of the solution globally in time.  We  also rigorously  prove the convergence of the anisotropic Navier-Stokes system  to the hydrostatic/Prandtl equations in the natural framework of the analytic data in the tangential variable. We  now present a precise description of the problem that we shall investigate.

 \smallskip

We consider two-dimensional incompressible Navier-Stokes equations in a thin strip: $\cS^\e\eqdefa \bigl\{(x,y)\in\R^2:\ 0<y<\e \ \bigr\},$
\begin{equation}\label{NS}
 \quad\left\{\begin{array}{l}
\displaystyle \p_tU+U\cdot\na U-\e^2\D U+\na P=0\quad \ \mbox{in}\ \ \cS^\e\times ]0,\infty[,\\
\displaystyle \dive U=0,
\end{array}\right.
\end{equation}
where $U(t,x,y)$ denotes the velocity of the fluid and $P(t,x,y)$ denotes the scalar pressure function which guarantees the divergence free condition of the velocity field $U$.
We complement the system \eqref{NS} with the non-slip boundary condition
\beno
U|_{y=0}=U|_{y=\e}=0,
\eeno
and the initial condition
\beno
U|_{t=0}=\left(u_0\bigl(x,\f{y}\e\bigr), \e v_0\bigl(x,\f{y}\e\bigr) \right)=U_0^\e \quad  \mbox{in} \ \ \cS^\e.
\eeno

As in \cite{BL, Lions96}, we  write
\beq \label{S1eq3}
U(t,x,y)=\Bigl(u^\e\bigl(t,x,\f{y}\e\bigr), \e v^\e\bigl(t,x,\f{y}\e\bigr) \Bigr) \andf P(t,x,y)=p^\e\bigl(t,x,\f{y}\e\bigr).
\eeq
Let $\cS\eqdefa \bigl\{(x,y)\in\R^2:\ 0<y<1\bigr\}$.
Then the system \eqref{NS} becomes the following scaled anisotropic Navier-Stokes system:
\begin{equation}\label{S1eq4}
 \quad\left\{\begin{array}{l}
\displaystyle \p_tu^\e+u^\e\p_x u^\e+v^\e\p_yu^\e-\e^2\p_x^2u^\e-\p_y^2u^\e+\p_xp^\e=0\ \ \mbox{in} \ \cS\times ]0,\infty[,\\
\displaystyle \e^2\left(\p_tv^\e+u^\e\p_x v^\e+v^\e\p_yv^\e-\e^2\p_x^2v^\e-\p_y^2v^\e\right)+\p_yp^\e=0,\\
\displaystyle \p_xu^\e+\p_yv^\e=0,\\
\displaystyle \left(u^\e, v^\e\right)|_{t=0}=\left(u_0, v_0\right),
\end{array}\right.
\end{equation}
together with the boundary condition
\beq \label{S1eq5}
\left(u^\e, v^\e\right)|_{y=0}=\left(u^\e, v^\e\right)|_{y=1}=0.
\eeq

Formally taking $\e\to 0$ in the system \eqref{S1eq4}, we obtain the hydrostatic Navier-Stokes/Prandtl equations:
\begin{equation}\label{S1eq6}
 \quad\left\{\begin{array}{l}
\displaystyle \p_tu+u\p_x u+v\p_yu-\p_y^2u+\p_xp=0\ \ \mbox{in} \ \cS\times ]0,\infty[,\\
\displaystyle\p_yp=0\\
\displaystyle \p_xu+\p_yv=0,\\
\displaystyle u|_{t=0}=u_0,
\end{array}\right.
\end{equation}
together with the boundary condition
\beq \label{S1eq7}
\left(u, v\right)|_{y=0}=\left(u, v\right)|_{y=1}=0.
\eeq

The goal of this paper is to justify the limit from the system \eqref{S1eq4} to the system \eqref{S1eq6}. The first step is to establish the well-posedness of the two system. Similar to the Prandtl equation, the nonlinear term $v\p_yu$ in \eqref{S1eq6} will lead to one derivative loss in the $x$ variable in the process of energy estimates. Thus, it is natural to work with analytic data in order to overcome this difficulty if we don't impose extra structural assumptions on the initial data \cite{GD, Renard09}. Indeed,  for the data which is analytic in $x,y$ variables,
Sammartino and Caflisch \cite{Caf} established the local
well-posedness result of \eqref{S1eq6} in the upper half space. Later, the analyticity in $y$
variable was removed by Lombardo, Cannone and Sammartino in
\cite{Can}. The main argument used in  \cite{Caf, Can} is to apply
the abstract Cauchy-Kowalewskaya (CK) theorem. We also mention a
 well-posedness result of Prandtl system for a class of data with
Gevrey regularity \cite{GM}. Lately,
 for a class of convex data, G\'erard-Varet, Masmoudi and Vicol \cite{GMV} proved the well-posedness of the  system \eqref{S1eq6}  in the Gevrey class.

\smallskip

Now let us state our main results.\smallskip

 The first result  is the global well-posedness of the system \eqref{S1eq4} with small analytic data in $x$ variable. The main interesting point is that the smallness of data is independent of $\e$ and there holds the global uniform estimate \eqref{S3eq18} with respect to the parameter $\e$.

\begin{thm}\label{thm1.1}
{\sl Let $a>0.$ We assume that the initial data satisfies
\beq\label{S1eq8}
\bigl\|e^{a|D_x|}(u_0,\e v_0)\bigr\|_{\cB^{\f12}}\leq c_0a
\eeq
for some $c_0$ sufficiently small. Then the system \eqref{S1eq4} has a unique global solution $(u,v)$ so that
\beq \label{S3eq18}
\begin{split}
\|\ekt(\up^\e,\e\vp^\e)\|_{\wt{L}^\infty(\R^+;\cB^{\f12})}
+&\|\ekt\p_y(\up^\e,\e\vp^\e)\|_{\wt{L}^2(\R^+;\cB^{\f12})}\\
+&\e^2\bigl\|\ekt(\up^\e,\e\vp^\e)\|_{\wt{L}^2(\R^+;\cB^{\f32})} \leq C\bigl\|e^{a |D_x|}(u_0,\e v_0)\bigr\|_{\cB^{\f12}},
\end{split}
\eeq
where $(\up^\e,\vp^\e)$ will be given by \eqref{S3eq1} and the constant $\frak{K}$ is determined by Poincar\'e inequality on the strip $\cS$ (see \eqref{S3eq5a}), and the functional spaces will be presented in Section \ref{sect2}. }
\end{thm}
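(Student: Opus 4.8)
The plan is to establish the uniform-in-$\e$ a priori bound \eqref{S3eq18} by a weighted energy estimate in the tangentially analytic class, and then to upgrade it to a global solution through a continuation argument built on a local Cauchy--Kowalevskaya theory. First I would quantify the loss of tangential radius of analyticity with a time-dependent weight: fix a large constant $\lambda$, let $\theta(t)$ be a nondecreasing function to be determined, and set $\Psi(t,\xi)=(a-\lambda\theta(t))|\xi|$, so that $\up^\e,\vp^\e$ of \eqref{S3eq1} are the velocities carrying the tangential Fourier multiplier $e^{\Psi(t,D_x)}$. Since $\Psi$ acts only in $x$, the pair $(\up^\e,\vp^\e)$ is still divergence free and solves \eqref{S1eq4} up to one extra term: differentiating the weight produces the damping $\lambda\dot\theta(t)|D_x|\up^\e$, whose sign is favourable and which will be used to absorb the tangential derivative loss of the nonlinearity.

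Next I would run the energy estimate on the coupled unknown $(\up^\e,\e\vp^\e)$, after localizing in tangential frequency with a dyadic block $\Delta_k^h$ and inserting the time weight $\ekt$. Testing the $\up^\e$ equation against $\ekt^2\up^\e$ and the $\vp^\e$ equation against $\ekt^2\vp^\e$, block by block, the two pressure contributions recombine, after integration by parts and using $\p_x\up^\e+\p_y\vp^\e=0$, into an integral of $p$ against $\dive(\up^\e,\vp^\e)=0$, and hence drop out entirely; this is exactly why the natural energy is measured on $(\up^\e,\e\vp^\e)$, the $\e^2$ prefactor of the second equation turning its dissipation into that of $\e\vp^\e$. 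The vertical dissipation $\|\ekt\p_y(\up^\e,\e\vp^\e)\|_{\cB^{\f12}}$, together with the Poincar\'e inequality on the strip $\cS$ with constant $\frak{K}$ (see \eqref{S3eq5a}), controls the growth produced by $\frac{d}{dt}\ekt^2=2\frak{K}\ekt^2$; this is the mechanism that lets the exponential weight persist for all time and even forces decay. The horizontal dissipation yields the $\e^2\|\cdot\|_{\cB^{\f32}}$ term, and the weight furnishes the good quantity proportional to $\lambda\dot\theta$ times a tangential $\cB^{1}$-norm.

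The core of the argument, and the principal obstacle, is the convection terms, in particular the Prandtl-type term $v^\e\p_y u^\e$. Incompressibility and the boundary condition \eqref{S1eq5} give $v^\e=-\int_0^y\p_x u^\e\,dy'$, whence $\|v^\e\|_{L^\infty_y}\lesssim\|\p_x u^\e\|_{L^2_y}$, so this term genuinely loses one tangential derivative, which is precisely the difficulty that dictates the analytic framework. I would estimate it in $\cB^{\f12}$ by a Bony decomposition in the $x$ variable, exploiting the subadditivity $|\xi|\le|\xi-\eta|+|\eta|$ of the phase, which bounds $e^{\Psi}$ applied to a tangential product by the product of the separately weighted factors, thereby allowing the single $x$-derivative to be shared as two half-derivatives. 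Choosing $\theta$ so that $\dot\theta(t)$ equals the tangential analytic norm governing the loss, the trilinear terms are bounded by $C\dot\theta(t)$ times the sum of the weight and dissipation functionals standing on the left, only the two factors bearing the recovered derivative needing the $\ekt$ weight; this is what keeps every constant independent of $\e$. The terms $u^\e\p_x u^\e$ and the $\e^2$-weighted nonlinearities of the $v^\e$ equation are treated the same way but are less singular.

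Assembling these estimates --- integrating in time and performing the Chemin--Lerner summation in tangential frequency that defines the $\wt{L}^p(\R^+;\cB^s)$ norms --- gives a closed inequality bounding the whole left-hand side of \eqref{S3eq18} by $\bigl\|e^{a|D_x|}(u_0,\e v_0)\bigr\|_{\cB^{\f12}}$ plus a term of size $\lambda^{-1}$ times itself, so the nonlinearity is absorbed once $\lambda$ is large. It then remains to control $\theta$: the Poincar\'e-driven factor forces $\|\up^\e(t)\|_{\cB^{\f12}}\lesssim e^{-\frak{K}t}$, so that $\theta(\infty)=\int_0^\infty\dot\theta\,dt\lesssim\frak{K}^{-1}\bigl\|e^{a|D_x|}(u_0,\e v_0)\bigr\|_{\cB^{\f12}}\lesssim\frak{K}^{-1}c_0 a$. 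Hence for $c_0$ small, depending on $\lambda$ and $\frak{K}$ but not on $\e$, a continuity/bootstrap argument shows $\lambda\theta(t)\le a/2$ for all $t$, so the analyticity radius $a-\lambda\theta(t)\ge a/2$ never degenerates and \eqref{S3eq18} holds globally. Global existence and uniqueness then follow by combining this uniform bound with a local solvability result in the analytic class through a standard continuation. The decisive and most delicate point throughout is the term $v^\e\p_y u^\e$: one must show that its one-derivative tangential loss is matched \emph{exactly} by the analytic gain $\lambda\dot\theta|D_x|$, and that this matching survives with constants independent of $\e$.
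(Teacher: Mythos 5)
Your proposal is correct and follows essentially the same route as the paper: a time-dependent analytic radius $a-\lambda\eta(t)$ consumed at a rate tied to the dissipation norm, a dyadically localized energy estimate on $(\up^\e,\e\vp^\e)$ with the $\ekt$ weight justified by the Poincar\'e inequality on the strip, Bony decomposition plus subadditivity of the phase to split the lost tangential derivative in $v^\e\p_yu^\e$, absorption of the trilinear terms into the $\lambda\dot\eta|D_x|$ damping for $\lambda$ large, and a continuity argument showing $\lambda\eta(\infty)\le a/2$ under the smallness condition \eqref{S1eq8}. The only cosmetic difference is that the paper fixes $\dot\eta(t)=\e\|\p_x\ue_\Psi\|_{\cB^{1/2}}+\|\p_y\ue_\Psi\|_{\cB^{1/2}}$ explicitly and records the nonlinear bounds in the time-weighted Chemin--Lerner norms $\wt{L}^2_{t,\dot\eta(t)}(\cB^{s+\frac12})$, which is exactly the quantitative form of your ``$C\dot\theta$ times the left-hand side'' statement.
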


The second result is the global well-posedness of the hydrostatic Navier-Stokes system \eqref{S1eq6} with small analytic data in $x$ variable.  We remark that similar global result seems open for the Prandtl equation, where only a lower bound of the lifespan to the solution was obtained (see \cite{ZZ}).

\begin{thm}\label{th1.2}
{\sl Let $a>0.$ We assume that the initial data satisfies
\beq\label{S1eq8a}
\bigl\|e^{a|D_x|}u_0\bigr\|_{\cB^{\f12}}\leq c_1a
\eeq
for some $c_1$ sufficiently small and there holds the compatibility condition $\pa_x\int_0^1u_0dy=0$. Then the system (\ref{S1eq6}) has a  unique global solution $u$ so that
\beq\label{S1eq12}
\|\ekt u_\Phi\|_{\wt{L}^\infty(\R^+;\cB^{\f12})}+\|\ektp\p_y u_\Phi\|_{\wt{L}^2(\R^+;\cB^{\f12})} \leq C\bigl\|e^{a |D_x|}u_0\bigr\|_{\cB^{\f12}},
\eeq where $u_\Phi$ will be determined by \eqref{S4eq1}.
Furthermore, if $e^{a |D_x|}u_0\in \cB^\f52, e^{a |D_x|}\pa_yu_0\in \cB^\f32$ and
\beq\label{S1eq14}
\bigl\|e^{a|D_x|}u_0\bigr\|_{\cB^{\frac12}}\leq \frac{c_2a}{1+\bigl\|e^{a|D_x|}u_0\bigr\|_{\cB^{\frac32}}}
\eeq
for some $c_2$ sufficiently small,  then exists a positive constant $C$ so that for $\lam=C^2\bigl(1+\bigl\|e^{a|D_x|}u_0\bigr\|_{\cB^{\f32}}\bigr)$ and $1\le s\le \f52$, one has
\beq \label{S1eq13}
\begin{split}
&\|\ekt u_\Phi\|_{\wt{L}^\infty(\R^+;\cB^{s})}+\|\ekt\p_y u_\Phi\|_{\wt{L}^2(\R^+;\cB^{s})} \leq C\bigl\|e^{a|D_x|}u_0\bigr\|_{\cB^{s}},\\
&\|\ekt(\pa_tu)_\Phi\|_{\wt{L}^2(\R^+;\cB^{\f32})}+\|\ekt\p_y^2u_\Phi\|_{\wt{L}^2(\R^+;\cB^{\f32})} \leq C\big(\bigl\|e^{a|D_x|}\pa_yu_0\bigr\|_{\cB^{\f32}}+\bigl\|e^{a|D_x|}u_0\bigr\|_{\cB^{\f52}}\big).
\end{split}
\eeq}
\end{thm}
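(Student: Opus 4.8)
The plan is to slave both the pressure and the vertical velocity to $u$, and then to run a weighted energy estimate in which the loss of one tangential derivative coming from $v\p_yu$ is absorbed by the shrinking of the analytic radius. Since $\p_yp=0$ forces $p=p(t,x)$, integrating the first equation of \eqref{S1eq6} over $y\in]0,1[$ and using \eqref{S1eq7} together with the compatibility condition $\p_x\int_0^1u_0\,dy=0$---which is propagated by the flow---determines $\p_xp$ as an explicit (nonlocal) functional of $u$; likewise the incompressibility condition and $v|_{y=0}=0$ give $v(t,x,y)=-\int_0^y\p_xu(t,x,y')\,dy'$. Thus \eqref{S1eq6} reduces to a single equation for $u$ whose only genuine difficulty is that $v$ carries one more $x$-derivative than $u$.

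First I would introduce the analytic weight $\Phi(t,\xi)=\bigl(a-\lam\tht(t)\bigr)|\xi|$ and set $u_\Phi\eqdefa e^{\Phi(t,D_x)}u$, where the rate of loss of analyticity $\tht$ is fixed self-consistently through
\[
\dot\tht(t)=\|\p_xu_\Phi(t)\|_{\cB^{\f12}},\qquad \tht(0)=0,
\]
in the spirit of the Cauchy-Kowalevskaya scheme of \cite{Ch04}. Applying $e^{\Phi(t,D_x)}$ to the reduced equation and carrying out an $\cB^{\f12}$ estimate on $u_\Phi$, the time derivative falling on the weight produces a favorable damping term proportional to $\lam\dot\tht$ in the top tangential norm of $u_\Phi$, while the subadditivity $\Phi(t,\xi)\le\Phi(t,\xi-\eta)+\Phi(t,\eta)$ turns the Fourier convolutions coming from $u\p_xu$ and $v\p_yu$ into products of $\cB^{\f12}$-type norms. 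The worst contribution $(v\p_yu)_\Phi$ costs exactly one $x$-derivative, which is matched against this gain; choosing $c_1$ small then forces $a-\lam\tht(t)\ge a/2$ for all time, so that the radius never degenerates.

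The global-in-time control comes from the Poincar\'e inequality on the strip: the Dirichlet condition \eqref{S1eq7} yields $\|u_\Phi\|_{\cB^{\f12}}\lesssim\|\p_yu_\Phi\|_{\cB^{\f12}}$, so the vertical dissipation dominates $\frak{K}\|u_\Phi\|_{\cB^{\f12}}^2$. Testing the equation against $\ekt u_\Phi$ converts the growth factor into precisely this Poincar\'e gain, and after absorbing the (small) nonlinear terms, integration in time yields \eqref{S1eq12}. A standard approximation, compactness and continuation argument then upgrades these a priori bounds to existence and uniqueness of the global solution.

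For \eqref{S1eq13} I would propagate $\cB^s$ regularity for $1\le s\le\f52$ by the same weighted scheme, now using the $\cB^{\f12}$ bound just obtained to keep one factor in every bilinear term small; this fixes $\lam=C^2\bigl(1+\|e^{a|D_x|}u_0\|_{\cB^{\f32}}\bigr)$, and the stronger smallness \eqref{S1eq14} is exactly what keeps $\lam\tht(t)<a$ for this larger $\lam$. Once $u_\Phi$ and $\p_yu_\Phi$ are controlled in $\cB^{\f52}$, the bounds on $\p_y^2u$ and $\p_tu$ in $\wt{L}^2(\R^+;\cB^{\f32})$ are read off the equation $\p_tu=\p_y^2u-u\p_xu-v\p_yu-\p_xp$, each term being estimated by the already-established $\cB^{\f52}$ and $\cB^{\f32}$ norms of $u$ and $\p_yu$. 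The recurring obstacle is the derivative loss in $v\p_yu$: closing every estimate hinges on calibrating the single good term $\lam\dot\tht$ produced by the decaying analytic radius so that it swallows this loss uniformly in time, which is precisely why smallness of the analytic data is indispensable.
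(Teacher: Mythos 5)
Your overall architecture (slaving $p$ and $v$ to $u$, an analytic weight $\Phi=(a-\lam\tht(t))|\xi|$ losing radius at a rate $\dot\tht$, the Poincar\'e inequality on the strip converting the vertical dissipation into the exponential gain $\ekt$) is exactly that of the paper. But there is a genuine gap in your choice $\dot\tht(t)=\|\p_xu_\Phi(t)\|_{\cB^{\f12}}$. Globality requires $\lam\int_0^\infty\dot\tht(t)\,dt<a$, and with your choice this means controlling $\int_0^\infty\|u_\Phi(t)\|_{\cB^{\f32}}\,dt$; the hydrostatic system \eqref{S1eq6} has \emph{no horizontal viscosity}, so the $\cB^{\f12}$ energy estimate produces only $\|\ekt\p_yu_\Phi\|_{\wt{L}^2(\R^+;\cB^{\f12})}\lesssim\|e^{a|D_x|}u_0\|_{\cB^{\f12}}$ and yields no globally integrable control of any quantity carrying a full extra $x$-derivative. (This is precisely where the hydrostatic case differs from the scaled system \eqref{S1eq4}, whose weight $\dot\eta$ contains $\e\|\p_xu^\e_\Psi\|_{\cB^{\f12}}$ \emph{with the prefactor} $\e$, paid for by the $\e^2\p_x^2$ dissipation.) The paper instead takes $\dot\tht(t)=\|\p_yu_\Phi(t)\|_{\cB^{\f12}}$, which \emph{is} globally integrable by Cauchy--Schwarz against $e^{-2\frak{K}t}$; what makes this weaker weight sufficient is the Poincar\'e-type bound $\|\D_k^{\rm h}u_\Phi\|_{L^\infty}\lesssim 2^{k/2}\|\D_k^{\rm h}\p_yu_\Phi\|_{L^2}$ together with $v=-\int_0^y\p_xu\,dy'$, which let every paraproduct piece of $u\p_xu$ and $v\p_yu$ be dominated by $\|u_\Phi\|^2_{\wt{L}^2_{t,\dot\tht}(\cB^{s+\f12})}$ for this $\dot\tht$ (Lemmas 3.1--3.2). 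Without this substitution your scheme closes only under a smallness hypothesis at the $\cB^{\f32}$ level, which proves a strictly weaker statement than \eqref{S1eq12}.

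Two smaller points. First, you do not explain why $\lam$ must be inflated to $C^2\bigl(1+\|e^{a|D_x|}u_0\|_{\cB^{\f32}}\bigr)$ for $s>1$: the reason is that the term $T^{\rm h}_{v}\p_yu$ can no longer be closed by the $s\le 1$ argument and produces a factor $\|u_\Phi\|_{\wt{L}^\infty_t(\cB^{\f32})}^{1/2}\|\p_yu_\Phi\|_{\wt{L}^2_t(\cB^{s})}$ which, after Young's inequality, must be absorbed by the damping term $\lam\dot\tht$; this is also the source of the strengthened smallness \eqref{S1eq14}. Second, you cannot obtain both $\p_tu$ and $\p_y^2u$ in $\wt{L}^2(\R^+;\cB^{\f32})$ by ``reading them off the equation'': that is circular. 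The paper first proves an independent energy estimate for $\p_yu$ (Proposition 4.2, which needs its own integration by parts on $T^{\rm h}_{\p_y^2u}v$) to control $\p_y^2u_\Phi$, and only then recovers $\p_tu$ from the equation via the product laws.
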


The third result is concerning the convergence from the scaled anisotropic  Navier-Stokes system \eqref{S1eq4}  to the hydrostatic Navier-Stokes system \eqref{S1eq6}.

\begin{thm}\label{thm3}
{\sl Let $a>0$ and $(u_0^\e, v_0^\e)$ satisfy \eqref{S1eq8}. Let $u_0$ satisfy $e^{a|D_x|}u_0\in \cB^\f12\cap \cB^\f52, e^{a|D_x|}\pa_yu_0\in \cB^\f32,$ and  there holds  \eqref{S1eq14}
for some $c_2$ sufficiently small and the compatibility condition
$\pa_x\int_0^1u_0dy=0.$ Then we have
\beq\label{S1eq15}
\begin{split}
\|(w^1_\Th,\e w^2_\Th)\|_{\wt{L}^\infty_t(\cB^{\f12})}
+\|\p_y(w^1_\Th,&\e w^2_\Th)\|_{\wt{L}^2_t(\cB^{\f12})}+\e\|(w^1_\Th,\e w^2_\Th)\|_{\wt{L}^2_t(\cB^{\f32})}\\
&\leq C\Bigl(\bigl\|e^{a|D_x|}(u_0^\e-u_0,\e (v_0^\e-v_0))\bigr\|_{\cB^{\f12}}+
M\e\Bigr).
\end{split}
\eeq
Here $w^1\eqdefa u^\e-u,\ w^2\eqdefa v^\e-v$ and $v_0$ is determined from $u_0$ via $\p_xu_0+\p_yv_0=0$ and $v_0|_{y=0}=v_0|_{y=1}=0,$
and $(w^1_\Th,\e w^2_\Th)$ will be given by \eqref{theta}.
}
\end{thm}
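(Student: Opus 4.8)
The plan is to study the system satisfied by the difference $w^1=u^\e-u$, $w^2=v^\e-v$, obtained by subtracting \eqref{S1eq6} from \eqref{S1eq4}. Writing $u^\e\p_xu^\e-u\p_xu=u^\e\p_xw^1+w^1\p_xu$ and $v^\e\p_yu^\e-v\p_yu=v^\e\p_yw^1+w^2\p_yu$, one sees that $w^1$ obeys a hydrostatic equation transported by the divergence-free field $(u^\e,v^\e)$, forced by the lower-order terms $w^1\p_xu+w^2\p_yu$, by the pressure difference $\p_x(p^\e-p)$, and by two genuinely $\e$-dependent remainders: the horizontal viscosity $\e^2\p_x^2u^\e$ and the vertical-momentum contribution to $p^\e$. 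Since $(u^\e,v^\e)$ and $(u,v)$ satisfy the same incompressibility constraint with identical vanishing boundary data, $w^2=-\int_0^y\p_xw^1\,dy'$ is slaved to $w^1$, and I would use this relation throughout to eliminate $w^2$.

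First I would isolate the $\e$-dependence of the pressure. Integrating the second equation of \eqref{S1eq4} in $y$ gives $p^\e(t,x,y)=\bar p^\e(t,x)+\e^2P^\e(t,x,y)$, so that $\p_x(p^\e-p)=\p_x(\bar p^\e-p)+\e^2\p_xP^\e$. The horizontal part $\bar p^\e-p$ depends only on $(t,x)$; tested against $w^1$ in the (weighted, dyadic) energy identity it produces no contribution, because at each tangential frequency $\int_0^1\p_yw^2\,dy=[w^2]_{y=0}^{y=1}=0$ by incompressibility and the boundary conditions. The remaining $\e^2\p_xP^\e$ is a true source; splitting likewise $\e^2\p_x^2u^\e=\e^2\p_x^2w^1+\e^2\p_x^2u$, I would retain $-\e^2\p_x^2w^1$ as the horizontal dissipation that reproduces the term $\e\|(w^1_\Th,\e w^2_\Th)\|_{\wt{L}^2_t(\cB^{\f32})}$ on the left of \eqref{S1eq15}, and treat $\e^2\p_x^2u$ as a source. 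Both sources are bounded in $\wt{L}^2_t(\cB^{\f12})$ by the higher-regularity estimate \eqref{S1eq13} for the limit $u$ and by the uniform bound \eqref{S3eq18} for $(u^\e,v^\e)$ (together with its time-derivative consequences for $\e^2\p_tv^\e=\e\,\p_t(\e v^\e)$), which accounts for the factor $M\e$ in \eqref{S1eq15}.

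The heart of the argument is a weighted $\cB^{\f12}$ energy estimate after applying the analytic multiplier $e^{\Th(t,D_x)}$ of \eqref{theta}, where $\Th(t,\xi)=(a-\lambda\theta(t))|\xi|$ quantifies the loss of tangential analyticity. Carrying out the Littlewood--Paley-in-$x$ estimate on $w^1_\Th$, the operator $-\p_y^2$ together with the Poincar\'e inequality \eqref{S3eq5a} supplies the vertical dissipation and the exponential weight $\ekt$, the term $-\e^2\p_x^2w^1$ supplies the $\e$-weighted $\cB^{\f32}$ dissipation, and the time derivative of the phase produces the crucial gain $\lambda\dot\theta(t)\,\||D_x|^{\f12}w^1_\Th\|^2$. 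I would then bound the transport and forcing terms by the tangential product and commutator laws in $\cB^{s}$, turning each derivative falling on a coefficient $u^\e$ or $u$ into a power of $|D_x|$ that is absorbed either by this analytic gain or by the vertical dissipation, the coefficients themselves being controlled through \eqref{S3eq18} and \eqref{S1eq13}.

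Finally I would integrate in time, choose $\lambda=C^2(1+\|e^{a|D_x|}u_0\|_{\cB^{\f32}})$ as in \eqref{S1eq13} so that the analytic gain and the dissipation dominate the linear-in-$w$ contributions, and use the smallness \eqref{S1eq8} and \eqref{S1eq14} to absorb the quadratic-in-$w$ terms; this leaves a linear Gronwall inequality whose forcing is precisely $\|e^{a|D_x|}(u_0^\e-u_0,\e(v_0^\e-v_0))\|_{\cB^{\f12}}+M\e$, giving \eqref{S1eq15}. I expect the main obstacle to be the transport term $v^\e\p_yw^1$ and the forcing $w^2\p_yu$: since $v^\e=-\int_0^y\p_xu^\e\,dy'$ and $w^2=-\int_0^y\p_xw^1\,dy'$ each carry one tangential derivative, these terms lose one $x$-derivative, which the degenerate horizontal viscosity $\e^2\p_x^2$ cannot control uniformly in $\e$. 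Closing them rests on the analytic gain $\lambda\dot\theta\,\||D_x|^{\f12}w^1_\Th\|^2$, and the delicate point is to prove, through the commutator estimates in the analytic class, that their contribution is bounded by $\dot\theta(t)$ times the squared weighted $\cB^{\f12}$ norm with a constant that a large choice of $\lambda$ can defeat.
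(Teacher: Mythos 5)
Your overall architecture --- subtract the two systems, introduce an analytic multiplier with a time-decreasing tangential radius, run a weighted dyadic energy estimate, and absorb the derivative-losing terms by the gain coming from the shrinking phase --- is indeed the paper's strategy, but two of your specific choices would break the argument as written.

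First, the phase. You take $\Th(t,\xi)=(a-\lam\tht(t))|\xi|$ with $\tht$ from \eqref{def:theta}, i.e.\ a radius shrinking only at the rate $\|\p_yu_\Phi\|_{\cB^{1/2}}$ dictated by the limit solution. The difference equation, however, is transported by $(u^\e,v^\e)$: the terms $u^\e\p_xw^1$ and $v^\e\p_yw^1$ lose a tangential derivative measured by $\|\p_yu^\e_\Psi\|_{\cB^{1/2}}$ (and $\e\|\p_xu^\e_\Psi\|_{\cB^{1/2}}$), not by $\dot\tht$; moreover, to apply the product estimates of Lemmas \ref{lem3.1}--\ref{lem3.3} to $(u^\e\p_xw^1)_\Th$ one needs $\Th\le\Psi$ pointwise, which your choice does not guarantee. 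The paper therefore introduces the combined rate $\dot{\zeta}(t)=\|(\p_yu^\e_{\Psi},\e\p_xu^\e_{\Psi})(t)\|_{\cB^{1/2}}+\|\p_yu_\Phi(t)\|_{\cB^{1/2}}$ and a new parameter $\mu\ge\lam$ (eventually $\mu=C^2M^2$), ensuring both $\Th\le\min(\Psi,\Phi)$ and that the gain $\mu\dot\zeta\,2^k\|\D_k^\h(w^1_\Th,\e w^2_\Th)\|_{L^2}^2$ dominates every transport contribution. Without this, your final ``large $\lam$ defeats the constant'' step fails precisely on the terms you identify as the main obstacle.

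Second, the vertical momentum equation. Writing $p^\e=\bar p^\e+\e^2P^\e$ and killing $\p_x(\bar p^\e-p)$ via $\int_0^1\p_yw^2\,dy=0$ is legitimate and, after integration by parts, equivalent to the paper's observation that the pressure gradient is orthogonal to the divergence-free pair $(w^1,w^2)$. But you then declare $\e^2\p_xP^\e$ a ``true source'' to be bounded using ``time-derivative consequences'' of \eqref{S3eq18} for $\e^2\p_tv^\e$. No uniform-in-$\e$ bound on $\p_tv^\e$ (equivalently on $\p_tu^\e$ in high tangential norms) follows from Theorem \ref{thm1.1}, and extracting one from the equation reintroduces the pressure circularly. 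The paper avoids this by keeping the pair $(w^1,\e w^2)$ in the energy: $\e^2\p_tw^2$ stays on the left as $\f12\f{d}{dt}\|\e w^2\|_{L^2}^2$, and the only time derivative in the remainder $R^2_\e$ is $\e^2\p_tv$ of the \emph{limit}, controlled by $\|(\p_tu)_\Phi\|_{\wt{L}^2(\R^+;\cB^{3/2})}$ from \eqref{S1eq13}. Relatedly, eliminating $w^2$ entirely and estimating only $w^1_\Th$ does not recover $\|\e w^2_\Th\|_{\wt{L}^\infty_t(\cB^{1/2})}$ on the left of \eqref{S1eq15}, since $\e\|w^2_\Th\|_{\cB^{1/2}}\lesssim\e\|w^1_\Th\|_{\cB^{3/2}}$ is only controlled in $L^2$ in time by the horizontal dissipation. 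Both gaps are repairable, but only by reorganizing the proof into essentially the form the paper uses.
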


We remark that without the smallness conditions \eqref{S1eq8} and \eqref{S1eq14}, we can prove the convergence of the system
\eqref{S1eq4} to the system \eqref{S1eq6} on a fixed time interval $[0,T].$

\medbreak
We end this introduction by the  notations that  will be used in all that follows. For~$a\lesssim b$, we mean that there is a
uniform constant $C,$ which may be different on different lines,
such that $a\leq Cb$.  We denote by $(a|b)_{L^2}$ the $L^2(\cS)$
inner product of $a$ and $b$.
We designate
by $L^p_T(L^q_{\rm h}(L^r_{\rm v}))$ the space $L^p(]0,T[;
L^q(\R_{x};L^r(\R_y))).$ Finally, we denote by $(d_k)_{k\in\Z}$ (resp. $(d_k(t))_{k\in\Z}$) to be a generic
element of $\ell^1(\Z)$ so that $\sum_{k\in\Z}d_k=1$ (resp. $\sum_{k\in\Z}d_k(t)=1$).

\medskip

\renewcommand{\theequation}{\thesection.\arabic{equation}}
\setcounter{equation}{0}

\section{Littlewood-Paley theory and functional framework}\label{sect2}

In the rest of this paper, we shall frequently  use Littlewood-Paley
decomposition in the horizontal variable $x$. Let us recall from
\cite{BCD} that \beq
\begin{split}
&\Delta_k^{\rm h}a=\cF^{-1}(\varphi(2^{-k}|\xi|)\widehat{a}),\qquad
S^{\rm h}_ka=\cF^{-1}(\chi(2^{-k}|\xi|)\widehat{a}),
\end{split} \label{1.3a}\eeq where $\cF
a$ and $\widehat{a}$  denote the partial  Fourier transform of
the distribution $a$ with respect to $x$ variable,  that is, $
\widehat{a}(\xi,y)=\cF_{x\to\xi}(a)(\xi,y),$
  and $\chi(\tau),$ ~$\varphi(\tau)$ are
smooth functions such that
 \beno
&&\Supp \varphi \subset \Bigl\{\tau \in \R\,/\  \ \frac34 \leq
|\tau| \leq \frac83 \Bigr\}\andf \  \ \forall
 \tau>0\,,\ \sum_{j\in\Z}\varphi(2^{-j}\tau)=1,\\
&&\Supp \chi \subset \Bigl\{\tau \in \R\,/\  \ \ |\tau|  \leq
\frac43 \Bigr\}\quad \ \ \ \andf \  \ \, \chi(\tau)+ \sum_{j\geq
0}\varphi(2^{-j}\tau)=1.
 \eeno

Let us also  recall the functional spaces we are going to use.

\begin{Def}\label{def1.2}
{\sl  Let~$s$ in~$\R$. For~$u$ in~${S}_h'(\cS),$ which
means that $u$ belongs to ~$S'(\cS)$ and
satisfies~$\lim_{k\to-\infty}\|S_k^{\rm h}u\|_{L^\infty}=0,$ we set
$$
\|u\|_{\cB^{s}}\eqdefa\big\|\big(2^{ks}\|\Delta_k^{\rm h}
u\|_{L^{2}}\big)_{k\in\Z}\bigr\|_{\ell ^{1}(\Z)}.
$$
\begin{itemize}

\item
For $s\leq \frac{1}{2}$, we define $ \cB^{s}(\cS)\eqdefa
\big\{u\in{S}_h'(\cS)\;\big|\; \|
u\|_{\cB^{s}}<\infty\big\}.$

\item
If $k$ is  a positive integer and if~$\frac{1}{2}+k< s\leq
\frac{3}{2}+k$, then we define~$ \cB^{s}(\cS)$  as the subset
of distributions $u$ in~${S}_h'(\cS)$ such that
$\p_x^k u$ belongs to~$ \cB^{s-k}(\cS).$
\end{itemize}
}
\end{Def}

In  order to obtain a better description of the regularizing effect
of the diffusion equation, we need to use Chemin-Lerner
type spaces $\widetilde{L}^{\lambda}_T(\cB^{s}(\cS))$.
\begin{Def}\label{def2.2}
{\sl Let $p\in[1,\,+\infty]$ and $T\in]0,\,+\infty]$. We define
$\widetilde{L}^{p}_T(\cB^{s}(\cS))$ as the completion of
$C([0,T]; \,S(\cS))$ by the norm
$$
\|a\|_{\widetilde{L}^{p}_T(\cB^{s})} \eqdefa \sum_{k\in\Z}2^{ks}
\Big(\int_0^T\|\Delta_k^{\rm h}\,a(t) \|_{L^2}^{p}\,
dt\Big)^{\frac{1}{p}}
$$
with the usual change if $p=\infty.$ }
\end{Def}

In order to overcome the difficulty that one can not use Gronwall
type argument in the framework of Chemin-Lerner space, we  need to use the time-weighted
Chemin-Lerner norm, which was introduced by the first two authors in
\cite{PZ1}.

\begin{Def}\label{def1.1} {\sl Let $f(t)\in L^1_{\mbox{loc}}(\R_+)$
be a nonnegative function. We define \beq \label{1.4}
\|a\|_{\wt{L}^p_{t,f}(\cB^{s})}\eqdefa
\sum_{k\in\Z}2^{ks}\Bigl(\int_0^t f(t')\|\D_k^{\rm
h}a(t')\|_{L^2}^p\,dt'\Bigr)^{\f1p}. \eeq}
\end{Def}

 \medbreak
For the convenience of the readers, we recall the following anisotropic
Bernstein type lemma from \cite{CZ1, Pa02}.

\begin{lem} \label{lem:Bern}
 {\sl Let $\cB_{\rm h}$ be a ball
of~$\R_{\rm h}$, and~$\cC_{\rm h}$  a ring of~$\R_{\rm
h}$; let~$1\leq p_2\leq p_1\leq \infty$ and ~$1\leq q\leq \infty.$
Then there holds:

\smallbreak\noindent If the support of~$\wh a$ is included
in~$2^k\cB_{\rm h}$, then
\[
\|\partial_{x}^\alpha a\|_{L^{p_1}_{\rm h}(L^{q}_{\rm v})} \lesssim
2^{k\left(|\al|+\left(\f 1 {p_2}-\f 1 {p_1}\right)\right)}
\|a\|_{L^{p_2}_{\rm h}(L^{q}_{\rm v})}.
\]

\smallbreak\noindent If the support of~$\wh a$ is included
in~$2^k\cC_{\rm h}$, then
\[
\|a\|_{L^{p_1}_{\rm h}(L^{q}_{\rm v})} \lesssim
2^{-kN} \|\partial_{x}^N a\|_{L^{p_1}_{\rm
h}(L^{q}_{\rm v})}.
\]
}
\end{lem}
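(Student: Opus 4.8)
The plan is to prove both inequalities by reducing them to a convolution in the horizontal variable $x$ alone, carrying the vertical variable $y$ along as a parameter, and then invoking Young's convolution inequality in $x$ together with a scaling computation for the rescaled kernels.

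For the first inequality, I would fix once and for all a function $\phi\in C_c^\infty(\R_{\rm h})$ that equals $1$ on the ball $\cB_{\rm h}$. Since $\wh a(\xi,y)$ is supported in $2^k\cB_{\rm h}$, one has $\wh a(\xi,y)=\phi(2^{-k}\xi)\wh a(\xi,y)$, whence
\[
\widehat{\pa_x^\al a}(\xi,y)=(i\xi)^\al\phi(2^{-k}\xi)\wh a(\xi,y)=2^{k|\al|}\psi_\al(2^{-k}\xi)\wh a(\xi,y),\qquad \psi_\al(\eta)\eqdefa (i\eta)^\al\phi(\eta).
\]
Taking the inverse Fourier transform in $x$ turns this multiplier into a convolution, $\pa_x^\al a=2^{k|\al|}\,g_k*_x a$ with $g_k(z)=2^k g(2^k z)$ and $g\eqdefa\cF^{-1}\psi_\al$. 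Because $\psi_\al$ is smooth and compactly supported, $g$ is Schwartz, so $\|g\|_{L^r}<\infty$ for every $r\in[1,\infty]$, and a direct dilation computation gives $\|g_k\|_{L^r_{\rm h}}=2^{k(1-\f1r)}\|g\|_{L^r}$.

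Next I would choose $r$ by $\f1r=1+\f1{p_1}-\f1{p_2}$, which lies in $[0,1]$ precisely because $p_2\le p_1$, apply Young's inequality in $x$ for each fixed $y$, and then take the $L^q$ norm in $y$; since the kernel bound is independent of $y$, the mixed norm survives and
\[
\|\pa_x^\al a\|_{L^{p_1}_{\rm h}(L^q_{\rm v})}\le 2^{k|\al|}\,\|g_k\|_{L^r_{\rm h}}\,\|a\|_{L^{p_2}_{\rm h}(L^q_{\rm v})}.
\]
Since $1-\f1r=\f1{p_2}-\f1{p_1}$, the kernel factor is exactly $2^{k(\f1{p_2}-\f1{p_1})}$, which combines with $2^{k|\al|}$ to give the asserted power. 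For the second inequality the point is that the ring $\cC_{\rm h}$ is bounded away from the origin: I would fix $\wt\phi\in C_c^\infty(\R_{\rm h}\setminus\{0\})$ equal to $1$ on $\cC_{\rm h}$ and write $\wh a(\xi,y)=\wt\phi(2^{-k}\xi)\wh a(\xi,y)=2^{-kN}\theta(2^{-k}\xi)\widehat{\pa_x^N a}(\xi,y)$, where $\theta(\eta)\eqdefa\wt\phi(\eta)/(i\eta)^N$ is smooth and compactly supported because $\wt\phi$ vanishes near $\eta=0$. As before this gives $a=2^{-kN}\wt g_k*_x\pa_x^N a$ with $\wt g_k(z)=2^k\wt g(2^kz)$ and $\wt g\eqdefa\cF^{-1}\theta\in L^1$; now Young's inequality with the $L^1$ kernel (whose norm $\|\wt g_k\|_{L^1}=\|\wt g\|_{L^1}$ is scale-invariant) preserves the exponent $p_1$ and yields $\|a\|_{L^{p_1}_{\rm h}(L^q_{\rm v})}\lesssim 2^{-kN}\|\pa_x^N a\|_{L^{p_1}_{\rm h}(L^q_{\rm v})}$.

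The only genuinely careful point is the mixed-norm structure: the convolution acts in $x$ only, so Young's inequality must be applied slicewise in $y$ and the resulting estimate combined by Minkowski's inequality in the $L^q_{\rm v}$ norm. I expect this to be the sole technical subtlety; everything else is the exact dilation bookkeeping of the rescaled kernels $g_k$ and $\wt g_k$, which are inverse Fourier transforms of fixed smooth, compactly supported (away from the origin, in the second case) multipliers and hence have finite, $k$-independent $L^r$ norms up to the explicit scaling factors.
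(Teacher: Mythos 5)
The paper itself contains no proof of this lemma: it is recalled verbatim from \cite{CZ1, Pa02}, where it is established by exactly the scheme you propose (spectral cut-off turned into a rescaled convolution kernel, then Young's inequality in the horizontal variable). In substance your argument is correct and is the intended one: the dilation identity $\|g_k\|_{L^r_{\rm h}}=2^{k(1-\f1r)}\|g\|_{L^r}$, the choice $\f1r=1+\f1{p_1}-\f1{p_2}\in[0,1]$ (legitimate precisely because $p_2\le p_1$), and the reduction of the ring case to a scale-invariant $L^1$ kernel via $\theta(\eta)=\wt\phi(\eta)/(i\eta)^N$, smooth because $\wt\phi$ vanishes near the origin, are all right.

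One step is stated in the wrong order, and it is exactly the point you flag as the only careful one, so let me be precise. As written --- ``apply Young's inequality in $x$ for each fixed $y$, and then take the $L^q$ norm in $y$'' --- you control the $L^q_{\rm v}(L^{p_1}_{\rm h})$ norm, with the norms in the opposite order to the statement; passing from this to $L^{p_1}_{\rm h}(L^q_{\rm v})$ by Minkowski's inequality for mixed norms is only possible when $q\le p_1$, which is not assumed (here $q\in[1,\infty]$ is arbitrary). The repair is to apply the two inequalities in the reverse order: first use Minkowski's integral inequality on the convolution, for each fixed $x$,
\[
\bigl\|(g_k*_x a)(x,\cdot)\bigr\|_{L^q_{\rm v}}\le \int_{\R}\bigl|g_k(x-x')\bigr|\,\bigl\|a(x',\cdot)\bigr\|_{L^q_{\rm v}}\,dx'
=\Bigl(|g_k|*_x \bigl\|a\bigr\|_{L^q_{\rm v}}\Bigr)(x),
\]
which is valid for every $q\in[1,\infty]$, and only then apply scalar Young's inequality in $x$ to $|g_k|\in L^r_{\rm h}$ and the function $x'\mapsto\|a(x',\cdot)\|_{L^q_{\rm v}}\in L^{p_2}_{\rm h}$. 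This yields the asserted mixed-norm bound with the same constants, and the remainder of your argument, including the ring case with the $k$-independent $L^1$ kernel, goes through unchanged.
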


In the following context, we shall constantly use  Bony's decomposition (see \cite{Bo}) for
the horizontal variable: \ben\label{Bony} fg=T^{\rm h}_fg+T^{\rm
h}_{g}f+R^{\rm h}(f,g), \een where \beno T^{\rm h}_fg\eqdefa \sum_kS^{\rm
h}_{k-1}f\Delta_k^{\rm h}g,\andf R^{\rm
h}(f,g)\eqdefa\sum_k{\Delta}_k^{\rm h}f\widetilde{\Delta}_{k}^{\rm h}g \eeno
with $\widetilde{\Delta}_k^{\rm h}g\eqdefa
\displaystyle\sum_{|k-k'|\le 1}\Delta_{k'}^{\rm h}g$.

\renewcommand{\theequation}{\thesection.\arabic{equation}}
\setcounter{equation}{0}
\section{Global well-posedness of the  system \eqref{S1eq4}}\label{sect1}

In this section, we establish the global well-posedness of the scaled anisotropic
 Navier-Stokes system \eqref{S1eq4} with small analytic data.

\begin{proof}[Proof of Theorem \ref{thm1.1}]
As in \cite{Ch04, CGP, CMSZ, mz1, mz2, ZZ},  for any locally bounded function
$\Psi$ on $\R^+\times \R$, we define \beq\label{S3eq1}
\ue_{\Psi}(t,x,y)\eqdefa\cF_{\xi\to
x}^{-1}\bigl(e^{\Psi(t,\xi)}\widehat{u}^\e(t,\xi,y)\bigr). \eeq We
introduce a key quantity $\eta(t)$ to describe the evolution of
the analytic band of $u^\e:$
\begin{equation}\label{S3eq2}
 \quad\left\{\begin{array}{l}
\displaystyle \dot{\eta}(t)=\e\|\p_x \ue_\Psi(t)\|_{\cB^{\f12}}+\|\p_y\ue_\Psi(t)\|_{\cB^{\f12}},\\
\displaystyle \eta|_{t=0}=0.
\end{array}\right.
\end{equation}
Here  the phase function $\Psi$ is defined by
\beq\label{S3eq3} \Psi(t,\xi)\eqdefa (a-\lam \eta(t))|\xi|. \eeq

In the rest of this section, we shall prove that under the assumption of \eqref{S1eq8}, there holds the {\it a priori} estimate
 \eqref{S3eq18} for smooth enough solutions of \eqref{S1eq4}, and neglect the regularization procedure. For simplicity,
 we shall neglect the script $\e.$ Then in view of \eqref{S1eq4} and \eqref{S3eq1}, we observe that
$(u_\Psi, v_\Psi)$ verifies
\beq\label{S3eq4}
\left\{
\begin{array}{ll}
\p_tu_{\Psi}+\lam\dot{\eta}(t)|D_x|u_{\Psi}+\left(u\p_xu\right)_{\Psi}+\left(v\p_y u\right)_{\Psi}-\e^2\p_x^2 u_{\Psi}
-\p_y^2u_{\Psi}+\p_x p_{\Psi}=0, \\
\e^2\left(\p_tv_{\Psi}+\lam\dot{\eta}(t)|D_x|v_{\Psi}+\left(u\p_xv\right)_{\Psi}+\left(v\p_y v\right)_{\Psi}-\e^2\p_x^2 v_{\Psi}
-\p_y^2v_{\Psi}\right)+\p_y p_{\Psi}=0,\\
\p_x u_{\Psi}+\p_yv_{\Psi} =0\quad\mbox{for}  \quad (t,x,y)\in\R_+\times\cS,\\
\left(u_{\Psi}, v_{\Psi}\right)|_{y=0}=\left(u_\Psi, v_\Psi\right)|_{y=1}=0,
\end{array}
\right.
 \eeq where $|D_x|$
denotes the Fourier multiplier with symbol $|\xi|.$

By applying the dyadic operator
$\D_k^{\rm h}$ to \eqref{S3eq4} and then taking the $L^2$ inner
product of the resulting equation with $\left(\D_k^{\rm
h}u_\Psi, \D_k^\h v_\Psi\right),$  we find \beq \label{S3eq5}
\begin{split}
\f12\f{d}{dt}&\bigl\|\D_k^{\rm h}(u_\Psi, \e v_\Psi)(t)\bigr\|_{L^2}^2+\lam\dot{\eta}\bigl(|D_x|\D_k^{\rm
h}(u_\Psi,\e v_\Psi)\ |\ \D_k^{\rm h}(u_\Psi,\e v_\Psi)\bigr)_{L^2}\\
&+\e^2\bigl\|\p_x\D_k^\h (u_\Psi, \e v_\Psi)\bigr\|_{L^2}^2+\bigl\|\p_y\D_k^\h (u_\Psi, \e v_\Psi)\bigr\|_{L^2}^2\\
=&-\bigl(\D_k^\h\left(u\p_xu\right)_\Psi | \D_k^\h u_\Psi\bigr)_{L^2}-\bigl(\D_k^\h\left(v\p_yu\right)_\Psi | \D_k^\h u_\Psi\bigr)_{L^2}\\
&-\e^2\bigl(\D_k^\h\left(u\p_xv\right)_\Psi | \D_k^\h v_\Psi\bigr)_{L^2}-\e^2\bigl(\D_k^\h\left(v\p_yv\right)_\Psi | \D_k^\h v_\Psi\bigr)_{L^2},
\end{split}
\eeq
where we used the fact that $\p_x u_\Psi+\p_y v_\Psi =0,$  so that
\beno
\bigl(\na\D_k^{\rm h} p_\Psi\ |\ \D_k^{\rm
h}(u_\Psi, v_\Psi)\bigr)_{L^2}=0.\eeno

While due to $\left(u_{\Psi}, v_{\Psi}\right)|_{y=0}=\left(u_\Psi, v_\Psi\right)|_{y=1}=0,$ by applying Poincar\'e inequality, we have
\beq \label{S3eq5a}
\frak{K}\|\D_k(u_\Psi,\e v_\Psi)\|_{L^2}^2\leq \frac12\bigl\|\p_y\D_k(u_\Psi, \e v_\Psi)\bigr\|_{L^2}^2.
\eeq
Then by  using Lemma \ref{lem:Bern} and by multiplying \eqref{S3eq5} by $e^{2\frak{K}t}$ and then  integrating the resulting inequality over $[0,t],$ we achieve
\beq \label{S3eq6}
\begin{split}
&\f12\|\ektp\D_k^{\rm h}(u_\Psi, \e v_\Psi)\|_{L^\infty_t(L^2)}^2+\lam2^k\int_0^t\dot{\eta}(t')\|\ektp\D_k^{\rm
h}(u_\Psi,\e v_\Psi)(t')\|_{L^2}^2\,dt'\\
&+\f12\int_0^te^{2\frak{K}t'}\Bigl(\|\D_k^\h\p_y u_\Psi\|_{L^2}^2+c\e^2 \bigl(2^{2k}\bigl(\|\D_k^\h u_\Psi\|_{L^2}^2+\e^2\|\D_k^\h v_\Psi\|_{L^2}^2\bigr)+\|\D_k^\h \p_y v_\Psi\|_{L^2}^2\bigr)\Bigr)\,dt'\\
&\leq  \bigl\|e^{a|D_x|}\D_k^\h(u_0,\e v_0)\bigr\|_{L^2}^2+\int_0^t\bigl|\bigl(\ektp\D_k^\h\left(u\p_xu\right)_\Psi | \ektp\D_k^\h u_\Psi\bigr)_{L^2}\bigr|\,dt'\\
&\ +\int_0^t\bigl|\bigl(\ektp\D_k^\h\left(v\p_yu\right)_\Psi | \ektp\D_k^\h  u_\Psi\bigr)_{L^2}\bigr|\,dt'
+\e^2\int_0^t\bigl|\bigl(\ektp\D_k^\h\left(u\p_xv\right)_\Psi | \ektp\D_k^\h v_\Psi\bigr)_{L^2}\bigr|\,dt'\\
&\ \qquad\qquad\qquad\qquad\qquad\qquad\qquad\qquad\quad\ \ +\e^2
\int_0^t\bigl|\bigl(\ektp\D_k^\h\left(v\p_yv\right)_\Psi | \ektp\D_k^\h  v_\Psi\bigr)_{L^2}\bigr|\,dt'.
\end{split}
\eeq

In what follows, we shall always assume that $t<T^\ast$ with
$T^\ast$ being determined by \beq\label{eq3.3} T^\ast\eqdefa
\sup\bigl\{\ t>0,\ \ \eta(t) <a/\lam\bigr\}. \eeq So that by
virtue of \eqref{S3eq3}, for any $t<T^\ast,$ there holds the
following convex inequality \beq\label{eq3.4} \Psi(t,\xi)\leq
\Psi(t,\xi-\eta)+\Psi(t,\eta)\quad\mbox{for}\quad \forall\
\xi,\eta\in \R. \eeq

The estimate of \eqref{S3eq6} relies on the following lemmas.

\begin{lem}\label{lem3.1}
{\sl For any $s\in]0,1]$ and $t\leq T^\ast,$ there holds
\beq \label{S3eq7}
\int_0^t\bigl|\bigl(\ektp\D_k^\h(u\p_x w)_\Psi\ |\ \ektp\D_k^\h w_\Psi\bigr)_{L^2}\bigr|\,dt'\lesssim d_k^2
2^{-2ks}\|\ektp w_\Psi\|_{\wt{L}^2_{t,\dot{\eta}(t)}(\cB^{s+\frac12})}^2.
\eeq}

\end{lem}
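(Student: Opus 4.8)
The plan is to estimate the trilinear term $\int_0^t|(\ektp\D_k^\h(u\p_xw)_\Psi\mid\ektp\D_k^\h w_\Psi)_{L^2}|\,dt'$ by exploiting two structural facts: the divergence-free condition $\p_xu_\Psi+\p_yv_\Psi=0$, which lets me integrate by parts and convert the $x$-derivative into a more favourable form, and the convexity inequality \eqref{eq3.4}, which controls the exponential weight $e^{\Psi}$ after applying the Fourier multiplier to a product. The first step is to decompose the product $u\p_xw$ using Bony's horizontal decomposition \eqref{Bony}, writing $u\p_xw=T^\h_u\p_xw+T^\h_{\p_xw}u+R^\h(u,\p_xw)$. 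I would handle the paraproduct and remainder pieces separately, since each localizes the frequencies in a different way and thus dictates which factor carries the $2^k$ derivative cost.

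\smallskip

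For each of the three Bony pieces I would proceed as follows. After applying $\D_k^\h$ and the weight $e^{\Psi}$, the key analytic input is that the operator $e^{\Psi(t,D_x)}$ acting on a horizontal product is controlled, via \eqref{eq3.4}, by the convolution of the weighted factors; concretely, $|e^{\Psi}\wh{(fg)}|\lesssim (e^{\Psi}|\wh f|)\ast(e^{\Psi}|\wh g|)$ pointwise in frequency, so the weight can be distributed onto each factor as $f_\Psi$ and $g_\Psi$. I would then estimate the resulting bilinear frequency-localized expressions using the anisotropic Bernstein inequalities of Lemma \ref{lem:Bern} in the horizontal variable together with the one-dimensional Sobolev embedding $L^\infty_y\hookrightarrow H^{1/2}_y$ in the vertical variable to pass between $L^\infty_\h(L^2_\v)$ and $L^2_\h(L^2_\v)$ norms. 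The gain of $d_k^22^{-2ks}$ comes from summing the Littlewood-Paley localizations: each factor contributes a $\cB^{s+1/2}$ norm, the extra $x$-derivative on $w$ is absorbed by one half-derivative from each of the two $w$-factors giving the $s+\f12$ regularity, and the remaining derivative is compensated by the time-weight $\dot\eta(t')$ precisely because $\dot\eta$ is built in \eqref{S3eq2} from the $\cB^{1/2}$ norms of $\p_xu_\Psi$ and $\p_yu_\Psi$. This is exactly why the right-hand side is a $\wt L^2_{t,\dot\eta}(\cB^{s+1/2})$ norm rather than an ordinary $\wt L^2_t$ norm: the lost tangential derivative is paid for by the decay of the analyticity radius.

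\smallskip

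The main obstacle will be handling the $T^\h_{\p_xw}u$ paraproduct and the remainder $R^\h(u,\p_xw)$, where the derivative $\p_x$ falls on the high-frequency factor $w$ rather than on the low-frequency factor $u$. Here a naive estimate loses a full horizontal derivative with no obvious way to recover it. The remedy is to first integrate by parts in $x$ on the full term $(u\p_xw_\Psi\mid w_\Psi)$, using $\p_xu_\Psi+\p_yv_\Psi=0$ to rewrite $u\p_xw=\p_x(uw)-(\p_xu)w$ and then trade the $x$-derivative against the available $\dot\eta$-weighted $\cB^{1/2}$-norm of $\p_xu_\Psi$; the half-derivative symmetry of the two $w$ factors then distributes the remaining regularity evenly. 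I would keep careful track of the constant so that it is genuinely uniform in $k$ and $\e$, which is essential for the later summation over $k\in\Z$ that produces the generic $\ell^1$ sequence $(d_k)$ with $\sum_k d_k=1$.
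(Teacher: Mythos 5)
Your skeleton --- Bony's horizontal decomposition, distribution of the weight $e^{\Psi}$ onto the factors via the convexity inequality \eqref{eq3.4}, and absorption of the loss into the time-weighted Chemin--Lerner norm --- is the same as the paper's. But the core of your plan for the terms you single out as difficult contains a genuine gap. First, you have the frequency roles reversed: in $T^\h_{\p_xw}u$ the derivative sits on the \emph{low}-frequency factor $S^\h_{k'-1}\p_xw$, not on a high-frequency $w$, so no integration by parts is needed --- one simply bounds $\|S^\h_{k'-1}\p_xw_\Psi\|_{L^\infty_\h(L^2_{\rm v})}$ by $\sum_{\ell\le k'-2}2^{3\ell/2}\|\D^\h_\ell w_\Psi\|_{L^2}$ and the sum converges precisely because $s\le1$; the remainder $R^\h(u,\p_xw)$ is handled the same way and forces $s>0$. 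Second, and more seriously, your proposed remedy of trading $\p_xu_\Psi$ against the weight $\dot\eta$ cannot work: by \eqref{S3eq2}, $\dot\eta$ contains only $\e\|\p_x\ue_\Psi\|_{\cB^{1/2}}$, so absorbing a full $\|\p_xu_\Psi\|_{\cB^{1/2}}$ into $\dot\eta$ costs a factor $\e^{-1}$ and destroys the uniformity in $\e$ that \eqref{S3eq18} asserts; moreover the same lemma is reused in Section \ref{sect4} with the weight $\dot\tht=\|\p_yu_\Phi\|_{\cB^{1/2}}$, which has no $\p_xu$ component at all. (Also, once $\D_k^\h$ and $e^{\Psi}$ are inserted, $\bigl(\D_k^\h(u\p_xw)_\Psi\,|\,\D_k^\h w_\Psi\bigr)$ is no longer of the symmetric form to which $\int u\,\p_xw\,w=-\f12\int(\p_xu)\,w^2$ applies without commutator estimates, and the divergence-free condition plays no role in this lemma.)

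The other missing ingredient is the estimate that makes the whole scheme close, namely \eqref{S3eq8}: by $\|f\|_{L^\infty_{\rm v}}\lesssim\|f\|_{L^2_{\rm v}}^{1/2}\|\p_yf\|_{L^2_{\rm v}}^{1/2}$ combined with the Poincar\'e inequality coming from the Dirichlet conditions at $y=0,1$, one gets $\|\D_k^\h u_\Psi\|_{L^\infty}\lesssim2^{k/2}\|\D_k^\h\p_yu_\Psi\|_{L^2}\lesssim d_k(t)\|\p_yu_\Psi\|_{\cB^{1/2}}$, hence $\|S^\h_{k'-1}u_\Psi\|_{L^\infty}\lesssim\|\p_yu_\Psi\|_{\cB^{1/2}}\le\dot\eta(t)$. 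This is what allows the entire $u$-factor in $T^\h_u\p_xw$ to disappear into the time weight, leaving the derivative $2^{k'}$ to be split as $2^{k'/2}\cdot2^{k'/2}$ between the two $w$-factors --- which is exactly why $\cB^{s+1/2}$ appears on the right-hand side of \eqref{S3eq7}. Your appeal to an embedding ``$L^\infty_y\hookrightarrow H^{1/2}_y$'' is stated in the wrong direction, and the critical embedding $H^{1/2}\hookrightarrow L^\infty$ fails in one dimension in any case; the Poincar\'e step, which your write-up omits, is not cosmetic but essential.
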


\begin{lem}\label{lem3.2}
{\sl For any $s\in]0,1]$ and $t\leq T^\ast,$ there holds
\beq \label{S3eq9}
\int_0^t\bigl|\bigl(\ektp\D_k^\h(v\p_y u)_\Psi\ |\ \ektp\D_k^\h u_\Psi\bigr)_{L^2}\bigr|\,dt'\lesssim d_k^2
2^{-2ks}\|\ektp u_\Psi\|_{\wt{L}^2_{t,\dot{\eta}(t)}(\cB^{s+\frac12})}^2.
\eeq}
\end{lem}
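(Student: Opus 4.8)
The plan is to reduce the genuinely Prandtl-type term $v\p_y u$ to the setting of Lemma \ref{lem3.1} by using incompressibility to remove $v$ and by storing the dangerous factor $\p_y u$ in the time weight $\dot\eta$. First, since $\p_x u+\p_y v=0$ and $v|_{y=0}=0$ force $v=-\int_0^y\p_x u\,dy'$, and the analytic multiplier $e^{\Psi(t,|D_x|)}$ acts only on the $x$ variable, one has $\D_\ell^\h v_\Psi=-\int_0^y\p_x\D_\ell^\h u_\Psi\,dy'$ and hence, by the horizontal Bernstein inequality of Lemma \ref{lem:Bern} together with $L^1(0,1)\hookrightarrow L^2(0,1)$,
\[
\|\D_\ell^\h v_\Psi\|_{L^2_\h(L^\infty_{\rm v})}\le\|\p_x\D_\ell^\h u_\Psi\|_{L^2_\h(L^1_{\rm v})}\lesssim 2^\ell\|\D_\ell^\h u_\Psi\|_{L^2}.
\]
This is the device that compensates the loss of one horizontal derivative: the vertical velocity is controlled by a full $\p_x$-derivative of $u$. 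Throughout I would distribute the weight $e^\Psi$ across products by means of the convex inequality \eqref{eq3.4}, which in Fourier variables gives $|\widehat{(fg)_\Psi}|\le\bigl(e^\Psi|\hat f|\bigr)*\bigl(e^\Psi|\hat g|\bigr)$ and thus lets me treat $(v\p_y u)_\Psi$ as though the weight acted separately on $v$ and on $\p_y u$.

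Next I would apply Bony's decomposition \eqref{Bony} in $x$, writing $v\p_y u=T^\h_v\p_y u+T^\h_{\p_y u}v+R^\h(v,\p_y u)$, localize with $\D_k^\h$, and pair against $\D_k^\h u_\Psi$. In each bilinear piece the factor $\p_y u$ enters through a single dyadic block, and I would absorb it into $\dot\eta$ via $2^{\ell/2}\|\D_\ell^\h(\p_y u)_\Psi\|_{L^2}\le d_\ell(t)\|\p_y u_\Psi\|_{\cB^{\f12}}\le d_\ell(t)\dot\eta(t)$, which is exactly the $\|\p_y u_\Psi\|_{\cB^{\f12}}$ contribution to $\dot\eta$ in \eqref{S3eq2}. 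Because $\ektp$ is a scalar depending only on $t'$, I would attach its two copies (from the two sides of the inner product in \eqref{S3eq6}) to the two surviving $u_\Psi$ factors --- the test block $\D_k^\h u_\Psi$ and the block produced from $v$ by the estimate above --- leaving the $\p_y u$ block unweighted, in agreement with the definition of $\dot\eta$; a Cauchy--Schwarz in $t'$ against the single power of $\dot\eta$ then yields $\|\ektp u_\Psi\|^2_{\wt{L}^2_{t,\dot\eta}(\cB^{s+\f12})}$. A direct power count closes each piece: the test block contributes $2^{-k(s+\f12)}$ and the block built from $v$ carries one extra horizontal derivative (either $2^{k(1-s)}$ from a low-frequency sum in $T^\h_v\p_y u$, or $2^{k(\f12-s)}$ from a single high block in $T^\h_{\p_y u}v$ and $R^\h$), while extracting $\dot\eta$ costs at most $2^{-k/2}$, so that for the two paraproducts the exponents already sum to $2^{-2ks}$; for $R^\h(v,\p_y u)$ one sums over $k'\ge k$ and gains the factor $2^{k/2}$ from estimating $\D_k^\h$ of the product in $L^2_\h$ through its $L^1_\h$ norm, which again reproduces $2^{-2ks}$. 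Collecting the sequences $(d_\ell)$ by Young's inequality for series produces the announced prefactor $d_k^2\,2^{-2ks}$.

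The main obstacle is precisely this term $v\p_y u$, the source of the one-derivative loss typical of Prandtl-type systems; everything hinges on the two compensations above --- rewriting $v=-\int_0^y\p_x u\,dy'$ (so that the missing horizontal derivative is paid for by the $\cB^{s+\f12}$ regularity and by the weight-splitting) and recognizing $\|\p_y u_\Psi\|_{\cB^{\f12}}$ as part of $\dot\eta$. I expect the only delicate point to be the summation of the dyadic series: convergence of the low-frequency sums in the two paraproducts requires $1-s\ge0$, while convergence of the high-high sum in $R^\h(v,\p_y u)$ requires $s>0$, and together these pin down exactly the range $s\in\,]0,1]$ asserted in the lemma. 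No boundary contribution arises, since both the weight and the dyadic cut-offs act only in $x$ and the representation $v=-\int_0^y\p_x u\,dy'$ already encodes $v|_{y=0}=0$, the companion condition $v|_{y=1}=0$ amounting to the compatibility relation $\p_x\int_0^1u\,dy=0$.
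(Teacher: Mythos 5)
Your proposal is correct and follows essentially the same route as the paper: Bony's decomposition of $v\p_yu$ in the horizontal variable, the substitution $v=-\int_0^y\p_xu\,dy'$ to convert $v$ into one horizontal derivative of $u$ (the paper's estimate \eqref{S3eq11}), absorption of $\|\p_yu_\Psi\|_{\cB^{\f12}}$ into the time weight $\dot\eta$ followed by Cauchy--Schwarz in $t'$, and the same dyadic power count pinning down $s\in\,]0,1]$ (with $s\le1$ needed for the low-frequency sums as in \eqref{S3eq12} and $s>0$ for the remainder term). The only cosmetic imprecision is that in $T^\h_{\p_yu}v$ the factor $\p_yu$ enters through $S^\h_{k'-1}$ rather than a single block, but the same $\cB^{\f12}$ bound applies, so nothing is affected.
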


\begin{lem}\label{lem3.3}
{\sl For $t\leq T^\ast,$ there holds
\beq \label{S3eq10}
\e^2\int_0^t\bigl|\bigl(\ektp\D_k^\h(v\p_y v)_\Psi\ |\ \ektp\D_k^\h v_\Psi\bigr)_{L^2}\bigr|\,dt'\lesssim d_k^2
2^{-k}\bigl\|\ektp (u_\Psi, \e v_\Psi)\bigr\|_{\wt{L}^2_{t,\dot{\eta}(t)}(\cB^{1})}^2.
\eeq}
\end{lem}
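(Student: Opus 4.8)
The plan is to estimate the trilinear quantity by Bony's decomposition in the tangential variable, transfer the analytic weight onto the individual factors via the convex inequality \eqref{eq3.4}, and then arrange things so that exactly one factor is absorbed into the time weight $\dot\eta$ while the remaining two reconstruct the $\cB^1$ norm on the right of \eqref{S3eq10}. Writing $v\p_y v=T^\h_v\p_y v+T^\h_{\p_y v}v+R^\h(v,\p_y v)$ by \eqref{Bony} and applying $\D_k^\h$, the spectral supports restrict the internal sum to $|k'-k|\le 4$ for the two paraproducts and to $k'\gtrsim k$ for the remainder. Since $e^{\Psi(t,\xi)}\le e^{\Psi(t,\xi-\eta)}e^{\Psi(t,\eta)}$ for $t\le T^\ast$, the Fourier transform of each piece of $(v\p_y v)_\Psi$ is dominated in modulus by the convolution of $e^\Psi|\widehat v|$ with $e^\Psi|\widehat{\p_y v}|$, so that all estimates may be carried out as if the weight were split across the factors; that is, I may treat the three factors as $v_\Psi$, $\p_y v_\Psi$ and the test $v_\Psi$.

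The structural input — available because $\p_x u_\Psi+\p_y v_\Psi=0$ and $v_\Psi|_{y=0}=v_\Psi|_{y=1}=0$ — is twofold. First, the vertical derivative is traded for a tangential one through $\p_y v_\Psi=-\p_x u_\Psi$, which turns the dangerous $\p_y$ into the derivative that the analytic weight controls and, at frequency $2^k$, promotes a $\p_y v$-block to a $u$-block at the level of $\cB^1$. Second, the boundary conditions give the trace and Poincar\'e bounds $\|\D_k^\h v_\Psi\|_{L^\infty_{\rm v}}\lesssim\|\D_k^\h\p_y v_\Psi\|_{L^2_{\rm v}}=\|\D_k^\h\p_x u_\Psi\|_{L^2_{\rm v}}$ and $\|\D_k^\h v_\Psi\|_{L^2_{\rm v}}\lesssim\|\D_k^\h\p_x u_\Psi\|_{L^2_{\rm v}}$, so that any factor $v$ placed in $L^\infty_{\rm v}$ or estimated by Poincar\'e is again converted into $\p_x u$. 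I emphasize that the vertical $L^\infty$ bound must always be applied to $v$ itself (which vanishes on $\pa\cS$) and never to $\p_y v$, which need not. Tracking powers of $\e$, the prefactor $\e^2$ splits as one $\e$ fed into the factor that becomes the weight — a low-frequency (or summed) $v$, respectively $\p_y v$, giving $\e\|\p_x u_\Psi\|_{\cB^{\f12}}\le\dot\eta$ after the frequency sum — and one $\e$ placed on the test factor, which then reads $\e v_\Psi$ and contributes $\|\e v_\Psi\|_{\cB^1}$; the third factor, being a $\p_y v$- or a Poincar\'e-converted $v$-block, supplies $\|u_\Psi\|_{\cB^1}$. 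Each Bony piece is thus bounded pointwise in $t'$ by $\dot\eta(t')\,d_k^2\,2^{-k}\|u_\Psi\|_{\cB^1}\|\e v_\Psi\|_{\cB^1}$, both factors being controlled by $\|(u_\Psi,\e v_\Psi)\|_{\cB^1}$.

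Concretely, for $T^\h_v\p_y v$ I would put $S^\h_{k'-1}v_\Psi$ in $L^\infty_{\rm h}(L^\infty_{\rm v})$, combining Lemma \ref{lem:Bern} in the horizontal frequency with the trace bound in the vertical to produce $\e\|\p_x u_\Psi\|_{\cB^{\f12}}\le\dot\eta$, while $\D_{k'}^\h\p_y v_\Psi=-\D_{k'}^\h\p_x u_\Psi$ and the test $\D_k^\h v_\Psi$ stay in $L^2$ and give, after the remaining $\e$ is absorbed, $\|u_\Psi\|_{\cB^1}$ and $\|\e v_\Psi\|_{\cB^1}$. For the symmetric paraproduct $T^\h_{\p_y v}v$ the weight is generated by the low-frequency factor $S^\h_{k'-1}\p_y v_\Psi=-S^\h_{k'-1}\p_x u_\Psi$ directly, whereas the high-frequency factor $\D_{k'}^\h v$ is a bare vertical velocity that I convert to $\p_x u$ by the trace bound above; the remainder $R^\h(v,\p_y v)$ is handled by summing over $k'\ge k-N_0$, using Lemma \ref{lem:Bern} in the horizontal frequency and the $\ell^1$ structure of $\cB^1$ to recover the factor $2^{-k}$ and the generic summable sequence $d_k$ with $\sum_k d_k=1$. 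Since $\ektp$ commutes with $\D_k^\h$, multiplying through by $\ektp$ is harmless, and a Cauchy--Schwarz in $t'$ against the measure $\dot\eta(t')\,dt'$ turns the two square factors into $\|\ektp(u_\Psi,\e v_\Psi)\|_{\wt L^2_{t,\dot\eta}(\cB^1)}^2$, which is \eqref{S3eq10}.

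The main obstacle is the combined $\e$- and derivative-bookkeeping: after the convex inequality one is left with three factors that are all vertical velocities, yet only two powers of $\e$ and a single right-hand side of the homogeneous form $\|(u_\Psi,\e v_\Psi)\|_{\cB^1}^2$ are available, so one must verify in each Bony piece that the factor destined for the weight really produces $\e\|\p_x u_\Psi\|_{\cB^{\f12}}$ and not a mismatched norm, that the leftover $\e$ lands on the factor that becomes $\e v_\Psi$, and that the third factor is converted to $u$ at the $\cB^1$ level using only the divergence-free relation together with the trace and Poincar\'e inequalities — in particular never placing $\p_y v$ in $L^\infty_{\rm v}$. A secondary technical point is the handling of the generic sequences $(d_k)$ and the interchange of the time integral with the $\ell^1$ summation defining $\cB^1$, which is what ultimately yields the clean time-weighted norm.
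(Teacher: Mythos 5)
Your proposal is correct and follows essentially the same route as the paper's proof: Bony decomposition of $v\p_yv$ in the horizontal variable, the convex inequality \eqref{eq3.4} to distribute the analytic weight, the identities $\p_yv=-\p_xu$ and $v=-\int_0^y\p_xu\,dy'$ together with Poincar\'e/trace bounds in $y$ to convert vertical velocities into $u$-blocks, one power of $\e$ feeding the weight $\e\|\p_xu_\Psi\|_{\cB^{\f12}}\le\dot\eta$ and the other landing on a $v$-factor, followed by Cauchy--Schwarz against $\dot\eta(t')\,dt'$. The only (immaterial) differences are in which factor of each Bony piece is designated as the weight and the precise $L^p_{\rm h}(L^q_{\rm v})$ placements --- e.g.\ in $T^{\rm h}_{\p_yv}v$ the paper keeps $\D_{k'}^{\rm h}v_\Psi$ as $\e v_\Psi$ in $L^2$ and uses the $\|\p_yu_\Psi\|_{\cB^{\f12}}$ part of $\dot\eta$, while you convert it to $\p_xu$ and use the $\e\|\p_xu_\Psi\|_{\cB^{\f12}}$ part; both bookkeepings close to the same bound.
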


Let us admit the above lemmas for the time being and continue our proof. Indeed, thanks to Lemmas \ref{lem3.1}-\ref{lem3.3},
we deduce from \eqref{S3eq6} that
\beno
\begin{split}
\f12\|&\ektp\D_k^{\rm h}(u_\Psi, \e v_\Psi)\|_{L^\infty_t(L^2)}^2+\lam2^k\int_0^t\dot{\eta}(t')\|\ektp\D_k^{\rm
h}(u_\Psi,\e v_\Psi)(t')\|_{L^2}^2\,dt'\\
&+\f{c}2\int_0^te^{2\frak{K}t'}\Bigl(\|\D_k^\h\p_y(u_\Psi,\e v_\Psi)\|_{L^2}^2+\e^22^{2k}\|\D_k^\h (u_\Psi, \e v_\Psi)\|_{L^2}^2\Bigr)\,dt'\\
\leq & \bigl\|e^{a |D_x|}\D_k^\h(u_0,\e v_0)\bigr\|_{L^2}^2+Cd_k^22^{-k}\bigl\|\ektp(u_\Psi, \e v_\Psi)\bigr\|_{\wt{L}^2_{t,\dot{\eta}(t)}(\cB^{1})}^2.
\end{split}
\eeno
By multiplying the above inequality by $2^k$ and then
taking square root of the resulting inequality,  and  finally by summing up the resulting ones over $\Z,$ we find that for $t\leq T^\ast$
\beno
\begin{split}
\|\ektp&(u_\Psi,\e v_\Psi)\|_{\wt{L}^\infty_t(\cB^{\f12})}+\sqrt{\lam}\|\ektp(u_\Psi,\e v_\Psi)\|_{\wt{L}^2_{t,\dot\eta(t)}(\cB^{1})}
+c\|\ektp\p_y (u_\Psi,\e v_\Psi)\|_{\wt{L}^2_t(\cB^{\f12})}\\
&+c\e^2\bigl\|\ektp(u_\Psi,\e v_\Psi)\|_{\wt{L}^2_t(\cB^{\f32})} \leq \bigl\|e^{a |D_x|}(u_0,\e v_0)\bigr\|_{\cB^{\f12}}
+C\|\ektp(u_\Psi,\e v_\Psi)\|_{\wt{L}^2_{t,\dot\eta(t)}(\cB^{1})}.
\end{split}
\eeno
Taking $\lam =C^2$ in the above inequality leads to
\beq \label{S3eq18a}
\begin{split}
\|\ektp(u_\Psi,\e v_\Psi)&\|_{\wt{L}^\infty_t(\cB^{\f12})}
+c\|\ektp\p_y (u_\Psi,\e v_\Psi)\|_{\wt{L}^2_t(\cB^{\f12})} \\
&+c\e^2\bigl\|\ektp(u_\Psi,\e v_\Psi)\|_{\wt{L}^2_t(\cB^{\f32})}\leq \bigl\|e^{a |D_x|}(u_0,\e v_0)\bigr\|_{\cB^{\f12}}\quad\mbox{for}\ t\leq T^\ast.
\end{split}
\eeq
Then for $t\leq T^\ast,$ we deduce from \eqref{S3eq2} that
\beno
\begin{split}
\eta(t)=&\int_0^t\bigl(\e\|\p_x \ue_\Psi(t')\|_{\cB^{\f12}}+\|\p_y\ue_\Psi(t')\|_{\cB^{\f12}}\bigr)\,dt'\\
\leq &\Bigl(\int_0^te^{-2\frak{K}t'}\,dt'\Bigr)^{\f12}\Bigl(\int_0^t\bigl(\e\|\ektp\p_x \ue_\Psi(t')\|_{\cB^{\f12}}+\|\ektp\p_y\ue_\Psi(t')\|_{\cB^{\f12}}\bigr)^2\,dt'\Bigr)^{\f12}\\
\leq &C\bigl\|\ektp(\e\p_x u_\Psi^\e, \p_yu_\Psi^\e)\bigr\|_{\wt{L}^2_t(\cB^{\frac12})}\\
\leq &C\bigl\|e^{a|D_x|}(u_0,\e v_0)\bigr\|_{\cB^{\f12}}.
\end{split}
\eeno
In particular, if we take $c_0$ in \eqref{S1eq8} to be so small that
\beq\label{S3eq19}
C\bigl\|e^{a|D_x|}(u_0,\e v_0)\bigr\|_{\cB^{\f12}}\leq \f{a}{2\lam},
\eeq
we deduce by a continuous argument that $T^\ast$ determined by \eqref{eq3.3} equals $+\infty$ and \eqref{S3eq18} holds. This completes the proof of Theorem \ref{thm1.1}.
\end{proof}

Now let us present the proof of Lemmas \ref{lem3.1} to \ref{lem3.3}. Indeed, we observe that it amounts to prove these lemmas for $\frak{K}=0.$ Without loss of generality, we may assume that $\widehat{u}\ge 0$ and $\widehat{v}\ge0$ (and similar assumption for the proof of the product law in the rest of this paper, one may check \cite{CGP} for detail).

\begin{proof}[Proof of Lemma \ref{lem3.1}] We first get, by applying Bony's decomposition \eqref{Bony}   for the horizontal variable to $u\p_xw$, that
\beno
u\p_xw=T^\h_{u}\p_xw+T^\h_{\p_xw}u+R^h(u,\p_xw).
\eeno
Accordingly, we shall handle the following three terms:\smallskip

\no $\bullet$ \underline{Estimate of
$\int_0^t\bigl(\D_k^{\rm h}(T^\h_{u}\p_xw)_\Psi\ |\ \D_k^{\rm
h}w_\Psi\bigr)_{L^2}\,dt'$}

Considering the support properties to the Fourier transform of the terms in $T^\h_{u}\p_xw,$ we infer
\beno
\begin{split}
\int_0^t\bigl|\bigl(\D_k^{\rm h}(T^\h_{u}\p_xw)_\Psi\ |&\ \D_k^{\rm
h}w_\Psi\bigr)_{L^2}\bigr|\,dt'\\
\lesssim & \sum_{|k'-k|\leq 4}\int_0^t\|S_{k'-1}^\h u_\Psi(t')\|_{L^\infty}
\|\D_{k'}^\h\p_xw_\Psi(t')\|_{L^2}\|\D_k^\h w_\Psi(t')\|_{L^2}\,dt'.
\end{split}
\eeno
However, it follows  from Lemma \ref{lem:Bern} and Poincar\'e inequality that
\beq\label{S3eq8}
\begin{split}
\|\D_k^\h u_\Psi(t)\|_{L^\infty}\lesssim & 2^{\frac{k}2}\|\D_k^\h u_\Psi(t)\|_{L^2_\h (L^\infty_{\rm v})}\\
\lesssim& 2^{\frac{k}2}\|\D_k^\h u_\Psi(t)\|_{L^2}^{\f12}\|\D_k^\h \p_y u_\Psi(t)\|_{L^2}^{\f12}\\
\lesssim & 2^{\frac{k}2} \|\D_k^\h \p_y u_\Psi(t)\|_{L^2}\lesssim d_j(t)\|\p_y u_\Psi(t)\|_{\cB^{\f12}},
\end{split}
\eeq
so that
\beno
\|S_{k'-1}^\h u_\Psi(t)\|_{L^\infty}\lesssim \|\p_yu_\Psi(t)\|_{\cB^{\f12}},
\eeno
which implies that
\beno
\begin{split}
\int_0^t\bigl|\bigl(\D_k^{\rm h}(T^\h_{u}\p_xw)_\Psi\ |&\ \D_k^{\rm
h}w_\Psi\bigr)_{L^2}\bigr|\,dt'\\
\lesssim & \sum_{|k'-k|\leq 4}2^{k'}\int_0^t\|\p_yu_\Psi(t)\|_{\cB^{\f12}}
\|\D_{k'}^\h w_\Psi(t)\|_{L^2}\|\D_k^\h w_\Psi(t')\|_{L^2}\,dt'.
\end{split}
\eeno
Applying H\"older inequality and using Definition \ref{def1.1} gives
\beno
\begin{split}
\int_0^t\bigl|\bigl(\D_k^{\rm h}(T^\h_{u}\p_xw)_\Psi\ |\ \D_k^{\rm
h}w_\Psi\bigr)_{L^2}\bigr|\,dt'
\lesssim & \sum_{|k'-k|\leq 4}2^{k'}\Bigl(\int_0^t\|\p_yu_\Psi(t')\|_{\cB^{\f12}}\|\D_{k'}^\h w_\Psi(t')\|_{L^2}^2\,dt'\Bigr)^{\f12}\\
&\qquad\qquad\times\Bigl(\int_0^t\|\p_yu_\Psi(t')\|_{\cB^{\f12}} \|\D_k^\h w_\Psi(t')\|_{L^2}^2\,dt'\Bigr)^{\f12}\\
\lesssim & d_k2^{-2ks}\|w_\Psi\|_{\wt{L}^2_{t,\dot{\eta}(t)}(\cB^{s+\frac12})}^2\Bigl(\sum_{|k'-k|\leq 4}d_{k'}2^{(k-k')\left(s-\f12\right)}\Bigr)\\
\lesssim & d_k^22^{-2ks}\|w_\Psi\|_{\wt{L}^2_{t,\dot{\eta}(t)}(\cB^{s+\frac12})}^2.
\end{split}
\eeno

\no $\bullet$ \underline{Estimate of
$\int_0^t\bigl(\D_k^{\rm h}(T^\h_{\p_xw}u)_\Psi\ |\ \D_k^{\rm
h}w_\Psi\bigr)_{L^2}\,dt'$}

Again considering the support properties to the Fourier transform of the terms in $T^\h_{\p_xw}u$ and thanks to  \eqref{S3eq8},
we have
\begin{align*}
\int_0^t\bigl|\bigl(&\D_k^{\rm h}(T^\h_{\p_xw}u)_\Psi\ |\ \D_k^{\rm
h}w_\Psi\bigr)_{L^2}\bigr|\,dt'\\
\lesssim & \sum_{|k'-k|\leq 4}\int_0^t\|S_{k'-1}^\h \p_xw_\Psi(t')\|_{L^\infty_\h(L^2_{\rm v})}
\|\D_{k'}^\h u_\Psi(t')\|_{L^2_\h(L^\infty_{\rm v})}\|\D_k^\h w_\Psi(t')\|_{L^2}\,dt'\\
\lesssim & \sum_{|k'-k|\leq 4}2^{-\frac{k'}2}\int_0^td_{k'}(t)\|S_{k'-1}^\h\p_xw_\Psi(t')\|_{L^\infty_\h(L^2_{\rm v})}\|\p_yu_\Psi(t')\|_{\cB^{\f12}}
\|\D_k^\h w_\Psi(t')\|_{L^2}\,dt'\\
\lesssim & \sum_{|k'-k|\leq 4}d_{k'}2^{-\frac{k'}2}\Bigl(\int_0^t\|S_{k'-1}^\h\p_xw_\Psi(t')\|_{L^\infty_\h(L^2_{\rm v})}^2\|\p_yu_\Psi(t')\|_{\cB^{\f12}}\,dt'\Bigr)^{\frac12}\\
&\qquad\qquad\qquad\qquad\qquad\times \Bigl(\int_0^t\|\D_k^\h w_\Psi(t')\|_{L^2}^2\|\p_yu_\Psi(t')\|_{\cB^{\f12}}\,dt'\Bigr)^{\frac12}.
\end{align*}
Yet we observe from Definition \ref{def1.1} and $s\leq 1$ that
\beno
\begin{split}
\Bigl(\int_0^t&\|S_{k'-1}^\h\p_xw_\Psi(t')\|_{L^\infty_\h(L^2_{\rm v})}^2\|\p_yu_\Psi(t')\|_{\cB^{\f12}}\,dt'\Bigr)^{\frac12}\\
\lesssim
&\sum_{\ell\leq k'-2}2^{\frac{3\ell}2}\Bigl(\int_0^t\|\D_\ell^\h w_\Psi(t')\|_{L^2}^2\|\p_yu_\Psi(t')\|_{\cB^{\f12}}\,dt'\Bigr)^{\frac12}\\
\lesssim
&\sum_{\ell\leq k'-2}d_\ell 2^{\ell(1-s)}\|w_\Psi\|_{\wt{L}^2_{t,\dot{\eta}(t)}(\cB^{s+\frac12})}\\
\lesssim &2^{k'(1-s)}\|w_\Psi\|_{\wt{L}^2_{t,\dot{\eta}(t)}(\cB^{s+\frac12})}.
\end{split}
\eeno
So that it comes out
\beno
\begin{split}
\int_0^t\bigl|\bigl(\D_k^{\rm h}(T^\h_{\p_xw}u)_\Psi\ |\ \D_k^{\rm
h}w_\Psi\bigr)_{L^2}\bigr|\,dt' \lesssim & d_k^22^{-2ks}\|w_\Psi\|_{\wt{L}^2_{t,\dot{\eta}(t)}(\cB^{s+\frac12})}^2.
\end{split}
\eeno

\no $\bullet$ \underline{Estimate of
$\int_0^t\bigl(\D_k^{\rm h}(R^\h(u,\p_xw))_\Psi\ |\ \D_k^{\rm
h}w_\Psi\bigr)_{L^2}\,dt'$}\vspace{0.2cm}

Again considering the support properties to the Fourier transform of the terms in $R^\h(u,\p_xw),$ we get, by applying lemma
\ref{lem:Bern} and \eqref{S3eq8}, that
\beno
\begin{split}
\int_0^t\bigl|&\bigl(\D_k^{\rm h}(R^\h(u,\p_xw))_\Psi\ |\ \D_k^{\rm
h}w_\Psi\bigr)_{L^2}\bigr|\,dt'\\
\lesssim &2^{\f{k}2}\sum_{k'\geq k-3}\int_0^t\|\wt{\D}_{k'}^\h u_\Psi(t')\|_{L^2_\h(L^\infty_{\rm v})}\|{\D}_{k'}^\h \p_x w_\Psi(t')\|_{L^2}\|\D_k^\h w_\Psi(t')\|_{L^2}\,dt'\\
\lesssim & 2^{\f{k}2}\sum_{k'\geq k-3}2^{\f{k'}2}\int_0^t\|\p_yu_\Psi(t')\|_{\cB^{\f12}}\|{\D}_{k'}^\h  w_\Psi(t')\|_{L^2}\|\D_k^\h w_\Psi(t')\|_{L^2}\,dt'.
\end{split}
\eeno
Applying H\"older inequality and using Definition \ref{def1.1} yields
\beno
\begin{split}
\int_0^t\bigl|&\bigl(\D_k^{\rm h}(R^\h(u,\p_xw))_\Psi\ |\ \D_k^{\rm
h}w_\Psi\bigr)_{L^2}\bigr|\,dt'\\
\lesssim & 2^{\f{k}2}\sum_{k'\geq k-3}2^{\f{k'}2}\Bigl(\int_0^t\|{\D}_{k'}^\h  w_\Psi(t')\|_{L^2}^2\|\p_yu_\Psi(t')\|_{\cB^{\f12}}\,dt'\Bigr)^{\f12}
\\
&\qquad\qquad\qquad\qquad\times \Bigl(\int_0^t\|\D_k^\h w_\Psi(t')\|_{L^2}^2\|\p_yu_\Psi(t')\|_{\cB^{\f12}}\,dt'\Bigr)^{\f12}\\
\lesssim &d_k2^{-2{k}s} \|w_\Psi\|_{\wt{L}^2_{t,\dot{\eta}(t)}(\cB^{s+\frac12})}^2\Bigl(\sum_{k'\geq k-3}d_{k'}2^{(k-k')s}\Bigr)\\
\lesssim &d_k^22^{-2ks}\|w_\Psi\|_{\wt{L}^2_{t,\dot{\eta}(t)}(\cB^{s+\frac12})}^2,
\end{split}
\eeno
where we used the fact that $s>0$ in the last step.

By summing up the above estimates, we conclude the proof of \eqref{S3eq7}.
\end{proof}

\begin{rmk}\label{rmk3.1}
In the  particular case when $w=u$ in \eqref{S3eq7}, \eqref{S3eq7} holds for any $s>0,$ that is
\beq \label{S3eq7a}
\int_0^t\bigl|\bigl(\ektp\D_k^\h(u\p_x u)_\Psi\ |\ \ektp\D_k^\h u_\Psi\bigr)_{L^2}\bigr|\,dt'\lesssim d_k^2
2^{-2ks}\|\ektp u_\Psi\|_{\wt{L}^2_{t,\dot{\eta}(t)}(\cB^{s+\frac12})}^2.
\eeq
It follows from the proof of Lemma \ref{lem3.1} that we only need to prove
\beq \label{S3eq7b}
\int_0^t\bigl|\bigl(\D_k^{\rm h}(T^\h_{\p_xu}u)_\Psi\ |\ \D_k^{\rm
h}u_\Psi\bigr)_{L^2}\bigr|\,dt' \lesssim d_k^22^{-2ks}\|u_\Psi\|_{\wt{L}^2_{t,\dot{\eta}(t)}(\cB^{s+\frac12})}^2\quad\mbox{for any}\ s>0.
\eeq
Indeed in view of \eqref{S3eq8},
we infer
\beno
\begin{split}
\int_0^t\bigl|\bigl(&\D_k^{\rm h}(T^\h_{\p_xu}u)_\Psi\ |\ \D_k^{\rm
h}u_\Psi\bigr)_{L^2}\bigr|\,dt'\\
\lesssim & \sum_{|k'-k|\leq 4}\int_0^t\|S_{k'-1}^\h \p_xu_\Psi(t')\|_{L^\infty}
\|\D_{k'}^\h u_\Psi(t')\|_{L^2}\|\D_k^\h u_\Psi(t')\|_{L^2}\,dt'\\
\lesssim & \sum_{|k'-k|\leq 4}2^{{k'}}\int_0^t\|\p_yu_\Psi(t')\|_{\cB^{\f12}}
\|\D_{k'}^\h u_\Psi(t')\|_{L^2}
\|\D_k^\h u_\Psi(t')\|_{L^2}\,dt'\\
\lesssim & \sum_{|k'-k|\leq 4}2^{{k'}}\Bigl(\int_0^t\|\D_{k'}^\h u_\Psi(t')\|_{L^2}^2\|\p_yu_\Psi(t')\|_{\cB^{\f12}}\,dt'\Bigr)^{\frac12}\\
&\qquad\qquad\qquad\qquad\qquad\times \Bigl(\int_0^t\|\D_k^\h u_\Psi(t')\|_{L^2}^2\|\p_yu_\Psi(t')\|_{\cB^{\f12}}\,dt'\Bigr)^{\frac12},
\end{split}
\eeno
which leads to \eqref{S3eq7b}.
\end{rmk}

\begin{proof}[Proof of Lemma \ref{lem3.2}] We first get, by applying Bony's decomposition \eqref{Bony}  for the horizontal variable to  $v\p_yu$, that
\beno
v\p_yu=T^\h_{v}\p_yu+T^\h_{\p_yu}v+R^h(v,\p_yu).
\eeno
Accordingly, we shall handle the following three terms:\smallskip

\no $\bullet$ \underline{Estimate of
$\int_0^t\bigl(\D_k^{\rm h}(T^\h_{v}\p_yu)_\Psi\ |\ \D_k^{\rm
h}u_\Psi\bigr)_{L^2}\,dt'$}

We first observe that
\beno
\begin{split}
\int_0^t\bigl|\bigl(\D_k^{\rm h}(T^\h_{v}\p_yu)_\Psi\ |&\ \D_k^{\rm
h}u_\Psi\bigr)_{L^2}\bigr|\,dt'\\
\lesssim & \sum_{|k'-k|\leq 4}\int_0^t\|S_{k'-1}^\h v_\Psi(t')\|_{L^\infty}
\|\D_{k'}^\h\p_yu_\Psi(t)\|_{L^2}\|\D_k^\h u_\Psi(t')\|_{L^2}\,dt'\\
\lesssim & \sum_{|k'-k|\leq 4} d_{k'}2^{-\f{k'}2}\int_0^t\|S_{k'-1}^\h v_\Psi(t')\|_{L^\infty}\|\p_yu_\Psi(t')\|_{\cB^{\f12}}
\|\D_k^\h u_\Psi(t')\|_{L^2}\,dt'.
\end{split}
\eeno
Due to $\p_xu+\p_yv=0$ and \eqref{S1eq5},  we write $v(t,x,y)=-\int_0^y\p_x u(t,x,y')\,dy'.$ Then we deduce
from Lemma \ref{lem:Bern} that
\beq \label{S3eq11}
\begin{split}
\|\D_k^\h v_\Psi(t)\|_{L^\infty}\leq & \int_0^1\|\D_k^\h\p_xu_\Psi(t,\cdot,y')\|_{L^\infty_\h}\,dy'\\
\lesssim &2^{\f{3k}2}\int_0^1\|\D_k^\h u_\Psi(t,\cdot,y')\|_{L^2_h}\,dy'\lesssim 2^{\f{3k}2}\|\D_k^\h u_\Psi(t)\|_{L^2},
\end{split}
\eeq
from which and $s\leq 1,$ we infer
\beq \label{S3eq12}
\begin{split}
\Bigl(\int_0^t&\|S_{k'-1}^\h v_\Psi(t')\|_{L^\infty}^2\|\p_yu_\Psi(t')\|_{\cB^{\f12}}\,dt'\Bigr)^{\f12}\\
\leq &\sum_{\ell\leq k'-2}2^{\f{3\ell}2}\Bigl(\int_0^t\|\D_\ell^\h u_\Psi(t)\|_{L^2}^2\|\p_yu_\Psi(t')\|_{\cB^{\f12}}\,dt'\Bigr)^{\f12}\\
\lesssim &\sum_{\ell\leq k'-2}d_\ell 2^{\ell(1-s)}\|u_\Psi\|_{\wt{L}^2_{t,\dot{\eta}(t)}(\cB^{s+\f12})}\\
\lesssim &2^{{k'}(1-s)}\|u_\Psi\|_{\wt{L}^2_{t,\dot{\eta}(t)}(\cB^{s+\frac12})}.
\end{split}
\eeq
Consequently, by virtue of Definition \ref{def1.1}, we obtain
\beno
\begin{split}
\int_0^t\bigl|\bigl(\D_k^{\rm h}(T^\h_{v}\p_yu)_\Psi\ |&\ \D_k^{\rm
h}u_\Psi\bigr)_{L^2}\bigr|\,dt'\\
\lesssim & \sum_{|k'-k|\leq 4}d_{k'}2^{-\f{k'}2}\Bigl(\int_0^t\|S_{k'-1}^\h v_\Psi(t')\|_{L^\infty}^2\|\p_yu_\Psi(t')\|_{\cB^{\f12}}\,dt'\Bigr)^{\f12}\\
&\qquad\qquad\qquad\quad\times\Bigl(\int_0^t\|\D_k^\h u_\Psi(t')\|_{L^2}^2\|\p_yu_\Psi(t')\|_{\cB^{\f12}}\,dt'\Bigr)^{\f12}\\
\lesssim &d_k^22^{-2ks}\|u_\Psi\|_{\wt{L}^2_{t,\dot{\eta}(t)}(\cB^{s+\f12})}^2.
\end{split}
\eeno

\no $\bullet$ \underline{Estimate of
$\int_0^t\bigl(\D_k^{\rm h}(T^\h_{\p_yu}v)_\Psi\ |\ \D_k^{\rm
h}u_\Psi\bigr)_{L^2}\,dt'$}

Notice that
\beno
\begin{split}
\int_0^t\bigl|\bigl(&\D_k^{\rm h}(T^\h_{\p_yu}v)_\Psi\ |\ \D_k^{\rm h}u_\Psi\bigr)_{L^2}\bigr|\,dt'\\
\lesssim & \sum_{|k'-k|\leq 4}\int_0^t\|S_{k'-1}^\h \p_y u_\Psi(t')\|_{L^\infty_\h(L^2_{\rm v})}
\|\D_{k'}^\h v_\Psi(t)\|_{L^2_\h(L^\infty_{\rm v})}\|\D_k^\h u_\Psi(t')\|_{L^2}\,dt',
\end{split}
\eeno
which together with \eqref{S3eq11} ensures that
\beno
\begin{split}
\int_0^t\bigl|\bigl(&\D_k^{\rm h}(T^\h_{\p_yu}v)_\Psi\ |\ \D_k^{\rm
h}u_\Psi\bigr)_{L^2}\bigr|\,dt'\\
\lesssim & \sum_{|k'-k|\leq 4}2^{k'}\int_0^t\|\p_yu_\Psi(t')\|_{\cB^{\f12}}\|\D_{k'}^\h u_\Psi(t')\|_{L^2}
\|\D_k^\h u_\Psi(t')\|_{L^2}\,dt'\\
\lesssim & \sum_{|k'-k|\leq 4}2^{k'}\Bigl(\int_0^t\|\D^\h_{k'}u_\Psi(t')\|_{L^2}^2\|\p_yu_\Psi(t')\|_{\cB^{\f12}}\,dt'\Bigr)^{\frac12}\\
&\qquad\qquad\qquad\times
\Bigl(\int_0^t\|\D_k^\h u_\Psi(t')\|_{L^2}^2\|\p_yu_\Psi(t')\|_{\cB^{\f12}}\,dt'\Bigr)^{\frac12}.
\end{split}
\eeno
Then thanks to Definition \ref{def1.1}, we arrive at
\beno
\begin{split}
\int_0^t\bigl|\bigl(\D_k^{\rm h}(T^\h_{\p_yu}v)_\Psi\ |\ \D_k^{\rm
h}u_\Psi\bigr)_{L^2}\bigr|\,dt' \lesssim & d_k^22^{-2ks}\|u_\Psi\|_{\wt{L}^2_{t,\dot{\eta}(t)}(\cB^{s+\frac12})}^2.
\end{split}
\eeno

\no $\bullet$ \underline{Estimate of
$\int_0^t\bigl(\D_k^{\rm h}(R^\h(v,\p_yu))_\Psi\ |\ \D_k^{\rm
h}u_\Psi\bigr)_{L^2}\,dt'$}\vspace{0.2cm}

We get, by applying lemma
\ref{lem:Bern} and \eqref{S3eq11}, that
\beno
\begin{split}
\int_0^t\bigl|&\bigl(\D_k^{\rm h}(R^\h(v,\p_yu))_\Psi\ |\ \D_k^{\rm
h}u_\Psi\bigr)_{L^2}\bigr|\,dt'\\
\lesssim &2^{\f{k}2}\sum_{k'\geq k-3}\int_0^t\|{\D}_{k'}^\h v_\Psi(t')\|_{L^2_\h(L^\infty_{\rm v})}\|\wt{\D}_{k'}^\h \p_y u_\Psi(t')\|_{L^2}\|\D_k^\h u_\Psi(t')\|_{L^2}\,dt'\\
\lesssim & 2^{\f{k}2}\sum_{k'\geq k-3}2^{\f{k'}2}\int_0^t\|{\D}_{k'}^\h u_\Psi(t')\|_{L^2}\|\p_yu_\Psi(t')\|_{\cB^{\f12}}\|\D_k^\h u_\Psi(t')\|_{L^2}\,dt'\\
\lesssim & 2^{\f{k}2}\sum_{k'\geq k-3}2^{\f{k'}2}\Bigl(\int_0^t\|{\D}_{k'}^\h  u_\Psi(t')\|_{L^2}^2\|\p_yu_\Psi(t')\|_{\cB^{\f12}}\,dt'\Bigr)^{\f12}
\\
&\qquad\qquad\qquad\times \Bigl(\int_0^t\|\D_k^\h u_\Psi(t')\|_{L^2}^2\|\p_yu_\Psi(t')\|_{\cB^{\f12}}\,dt'\Bigr)^{\f12},
\end{split}
\eeno
which together with Definition \ref{def1.1} and $s>0$ ensures that
\beno
\begin{split}
\int_0^t\bigl|\bigl(\D_k^{\rm h}(R^\h(v,\p_yu)_\Psi\ |\ \D_k^{\rm
h}u_\Psi\bigr)_{L^2}\bigr|\,dt'
\lesssim &d_k2^{-{2ks}} \|u_\Psi\|_{\wt{L}^2_{t,\dot{\eta}(t)}(\cB^{s+\frac12})}^2\Bigl(\sum_{k'\geq k-3}d_{k'}2^{(k-k')s}\Bigr)\\
\lesssim &d_k^22^{-2ks}\|u_\Psi\|_{\wt{L}^2_{t,\dot{\eta}(t)}(\cB^{s+\frac12})}^2.
\end{split}
\eeno

By summing up the above estimates, we  achieve \eqref{S3eq9}.
\end{proof}

\begin{proof}[Proof of Lemma \ref{lem3.3}] We first get, by applying Bony's decomposition \eqref{Bony}  for the horizontal variable to $v\p_yv$, that
\beno
v\p_yv=T^\h_{v}\p_yv+T^\h_{\p_yv}v+R^h(v,\p_yv).
\eeno
Let us handle the following three terms:\smallskip

\no $\bullet$ \underline{Estimate of
$\int_0^t\bigl(\D_k^{\rm h}(T^\h_{v}\p_yv)_\Psi\ |\ \D_k^{\rm
h}v_\Psi\bigr)_{L^2}\,dt'$}

Due to $\p_yv=-\p_xu,$ one has
\beno
\begin{split}
\e^2\int_0^t\bigl|\bigl(&\D_k^{\rm h}(T^\h_{v}\p_yv)_\Psi\ |\ \D_k^{\rm
h}v_\Psi\bigr)_{L^2}\bigr|\,dt'\\
\lesssim & \e^2\sum_{|k'-k|\leq 4}\int_0^t\|S_{k'-1}^\h v_\Psi(t')\|_{L^\infty}
\|\D_{k'}^\h\p_yv_\Psi(t)\|_{L^2}\|\D_k^\h v_\Psi(t')\|_{L^2}\,dt'\\
\lesssim & \e\sum_{|k'-k|\leq 4}2^{-\f{k'}2}\int_0^t\|S_{k'-1}^\h v_\Psi(t')\|_{L^\infty}\e\|\p_xu_\Psi(t')\|_{\cB^{\f12}}
\|\D_k^\h v_\Psi(t')\|_{L^2}\,dt'\\
\lesssim & \e\sum_{|k'-k|\leq 4}2^{-\f{k'}2}\Bigl(\int_0^t\|S_{k'-1}^\h v_\Psi(t')\|_{L^\infty}^2\e\|\p_xu_\Psi(t')\|_{\cB^{\f12}}\,dt'\Bigr)^{\f12}\\
&\qquad\qquad\qquad\qquad\times \Bigl(\int_0^t\|\D_k^\h v_\Psi(t')\|_{L^2}^2\e\|\p_xu_\Psi(t')\|_{\cB^{\f12}}\,dt'\Bigr)^{\f12}.
\end{split}
\eeno
Yet we get, by a similar derivation of \eqref{S3eq12}, that
\beno
\begin{split}
\Bigl(\int_0^t&\|S_{k'-1}^\h v_\Psi(t')\|_{L^\infty}^2\e\|\p_x u_\Psi(t')\|_{\cB^{\f12}}\,dt'\Bigr)^{\f12}
\lesssim d_{k'}2^{\f{k'}2}\|u_\Psi\|_{\wt{L}^2_{t,\dot{\eta}(t)}(\cB^{1})}.
\end{split}
\eeno
Hence we deduce from Definition \ref{def1.1} that
\beno
\e^2\int_0^t\bigl|\bigl(\D_k^{\rm h}(T^\h_{v}\p_yv)_\Psi\ |\ \D_k^{\rm
h}v_\Psi\bigr)_{L^2}\bigr|\,dt'\leq d_k^22^{-k}\|u_\Psi\|_{\wt{L}^2_{t,\dot{\eta}(t)}(\cB^{1})}\e\|v_\Psi\|_{\wt{L}^2_{t,\dot{\eta}(t)}(\cB^{1})}.
\eeno

\no $\bullet$ \underline{Estimate of
$\int_0^t\bigl(\D_k^{\rm h}(T^\h_{\p_yv}v)_\Psi\ |\ \D_k^{\rm
h}v_\Psi\bigr)_{L^2}\,dt'$}

Notice that
\beno
\begin{split}
\int_0^t\bigl|\bigl(&\D_k^{\rm h}(T^\h_{\p_yv}v)_\Psi\ |\ \D_k^{\rm h}v_\Psi\bigr)_{L^2}\bigr|\,dt'\\
\lesssim & \sum_{|k'-k|\leq 4}\int_0^t\|S_{k'-1}^\h \p_x u_\Psi(t')\|_{L^\infty}
\|\D_{k'}^\h v_\Psi(t)\|_{L^2}\|\D_k^\h v_\Psi(t')\|_{L^2}\,dt',
\end{split}
\eeno
which together with \eqref{S3eq8} ensures that
\beno
\begin{split}
\int_0^t\bigl|\bigl(&\D_k^{\rm h}(T^\h_{\p_yv}v)_\Psi\ |\ \D_k^{\rm h}v_\Psi\bigr)_{L^2}\bigr|\,dt'\\
\lesssim & \sum_{|k'-k|\leq 4}2^{k'}\int_0^t\|\p_y u_\Psi(t')\|_{\cB^{\f12}}
\|\D_{k'}^\h v_\Psi(t)\|_{L^2}\|\D_k^\h v_\Psi(t')\|_{L^2}\,dt'\\
\lesssim & \sum_{|k'-k|\leq 4}2^{k'}\Bigl(\int_0^t
\|\D_{k'}^\h v_\Psi(t)\|_{L^2}^2\|\p_y u_\Psi(t')\|_{\cB^{\f12}}\,dt'\Bigr)^{\f12}\\
&\qquad\qquad\qquad\qquad \times \Bigl(\int_0^t\|\D_k^\h v_\Psi(t')\|_{L^2}^2\|\p_y u_\Psi(t')\|_{\cB^{\f12}}\,dt'\Bigr)^{\f12}
\end{split}
\eeno
Then thanks to Definition \ref{def1.1}, we arrive at
\beno
\begin{split}
\int_0^t\bigl|\bigl(\D_k^{\rm h}(T^\h_{\p_yv}v)_\Psi\ |\ \D_k^{\rm
h}v_\Psi\bigr)_{L^2}\bigr|\,dt' \lesssim & d_k^22^{-k}\|v_\Psi\|_{\wt{L}^2_{t,\dot{\eta}(t)}(\cB^{1})}^2.
\end{split}
\eeno

\no $\bullet$ \underline{Estimate of
$\int_0^t\bigl(\D_k^{\rm h}(R^\h(v,\p_yv))_\Psi\ |\ \D_k^{\rm
h}v_\Psi\bigr)_{L^2}\,dt'$}\vspace{0.2cm}

Due to $\p_xu+\p_yv=0,$ we get, by applying lemma
\ref{lem:Bern} and \eqref{S3eq11}, that
\beno
\begin{split}
\int_0^t\bigl|&\bigl(\D_k^{\rm h}(R^\h(v,\p_yv))_\Psi\ |\ \D_k^{\rm
h}v_\Psi\bigr)_{L^2}\bigr|\,dt'\\
\lesssim &2^{\f{k}2}\sum_{k'\geq k-3}\int_0^t\|{\D}_{k'}^\h v_\Psi(t')\|_{L^2}\|\wt{\D}_{k'}^\h \p_x u_\Psi(t')\|_{L^2_\h(L^\infty_{\rm v})}\|\D_k^\h v_\Psi(t')\|_{L^2}\,dt'\\
\lesssim & 2^{\f{k}2}\sum_{k'\geq k-3}2^{\f{k'}2}\int_0^t\|{\D}_{k'}^\h v_\Psi(t')\|_{L^2}\|\p_yu_\Psi(t')\|_{\cB^{\f12}}\|\D_k^\h v_\Psi(t')\|_{L^2}\,dt'\\
\lesssim & 2^{\f{k}2}\sum_{k'\geq k-3}2^{\f{k'}2}\Bigl(\int_0^t\|{\D}_{k'}^\h  v_\Psi(t')\|_{L^2}^2\|\p_yu_\Psi(t')\|_{\cB^{\f12}}\,dt'\Bigr)^{\f12}
\\
&\qquad\qquad\qquad\times \Bigl(\int_0^t\|\D_k^\h v_\Psi(t')\|_{L^2}^2\|\p_yu_\Psi(t')\|_{\cB^{\f12}}\,dt'\Bigr)^{\f12},
\end{split}
\eeno
which together with Definition \ref{def1.1} and $s>0$ ensures that
\beno
\begin{split}
\int_0^t\bigl|\bigl(\D_k^{\rm h}(R^\h(v,\p_yu))_\Psi\ |\ \D_k^{\rm
h}u_\Psi\bigr)_{L^2}\bigr|\,dt'
\lesssim d_k^22^{-k}\|v_\Psi\|_{\wt{L}^2_{t,\dot{\eta}(t)}(\cB^{1})}^2.
\end{split}
\eeno

By summing up the above estimates, we obtain \eqref{S3eq10}. This concludes the proof of Lemma \ref{lem3.3}.
\end{proof}

\renewcommand{\theequation}{\thesection.\arabic{equation}}
\setcounter{equation}{0}
\section{Global well-posedness of the system \eqref{S1eq6}}\label{sect4}

In this section, we study the global well-posedness of the hydrostatic approximate equations \eqref{S1eq6} with small analytic data.

Due to the compatibility condition
$\pa_x\int_0^1u_0dy=0$, we deduce from $\pa_xu+\pa_yv=0$ that
\ben
\pa_x\int_0^1u(t,x,y)dy=0
\een
so that by integrating the equation $\pa_tu+u\pa_xu+v\pa_yu-\pa_y^2u+\pa_xp=0$ for $y\in [0,1]$ and using the fact
that $\pa_y p=0,$ we obtain
\ben
\pa_x^2p=\pa_x\Big(\pa_yu(t,x,1)-\pa_yu(t,x,0)-\pa_x\int_0^1u^2(t,x,y)dy\Big).
\een

We define
\beq \label{S4eq1}
u_\Phi(t,x,y)\eqdefa \cF_{\xi\to
x}^{-1}\bigl(e^{\Phi(t,\xi)}\widehat{u}(t,\xi,y)\bigr) \with \Phi(t,\xi)\eqdefa (a-\lam \tht(t))|\xi|,
\eeq
where the quantity $\theta(t)$ describes the evolution of
the analytic band of $u,$ which is determined by
\begin{equation}\label{def:theta}
 \dot{\tht}(t)=\|\pa_yu_\Phi(t)\|_{\cB^{\f12}}\with \tht|_{t=0}=0.
\end{equation}

\begin{proof}[Proof of Theorem \ref{th1.2}]
In view of \eqref{S1eq6} and \eqref{S4eq1}, we observe
that $u_\Phi$ verifies
\beq\label{S4eq2}
\begin{split}
&\p_tu_\Phi+\lam\dot{\tht}(t)|D_x|u_\Phi+(u\pa_x u)_\Phi+(v\pa_yu)_\Phi-\p_{y}^2u_\Phi+\pa_xp_\Phi=0,
\end{split} \eeq
where $|D_x|$ denotes the Fourier multiplier with symbol $|\xi|.$

By applying $\D_k^\h$ to \eqref{S4eq2} and taking $L^2$ inner product of the resulting equation with $\D_k^\h u_\Phi,$ we find
\beq \label{S4eq5}
\begin{split}
\f12\f{d}{dt}&\|\D_k^{\rm h}u_\Phi(t)\|_{L^2}^2+\lam\dot{\theta}\bigl(|D_x|\D_k^{\rm
h}u_\Phi\ |\ \D_k^{\rm h}u_\Phi\bigr)_{L^2}+\|\D_k^\h\p_y\up\|_{L^2}^2\\
=&-\bigl(\D_k^\h\left(u\p_xu\right)_\Phi | \D_k^\h u_\Phi\bigr)_{L^2}-\bigl(\D_k^\h\left(v\p_yu\right)_\Phi | \D_k^\h u_\Phi\bigr)_{L^2}-\bigl(\D_k^\h \p_x p_\Phi  | \D_k^\h u_\Phi\bigr)_{L^2}.
\end{split}
\eeq
Thanks to \eqref{S1eq7} and $\p_xu+\p_yv=0,$ we get, by using integration by parts, that
\beno
\begin{split}
\big(\D_k^\h\pa_xp_\Phi | \D_k^\h u_\Phi\big)_{L^2}=&-\big(\D_k^\h p_\Phi | \D_k^\h\pa_xu_\Phi\big)_{L^2}\\
=&\big(\D_k^\h p_\Phi | \D_k^\h\pa_yv_\Phi\big)_{L^2}
=-\big(\D_k^\h \pa_yp_\Phi | \D_k^\h v_\Phi\big)_{L^2}=0.
\end{split}
\eeno
Then
by  using Lemma \ref{lem:Bern}, \eqref{S3eq5a} and by multiplying \eqref{S4eq5} by $e^{2\frak{K}t}$ and then  integrating the resulting inequality over $[0,t],$ we achieve
\beq \label{S4eq6}
\begin{split}
\f12\|&\ektp\D_k^{\rm h}u_\Phi\|_{L^\infty_t(L^2)}^2+\lam2^k\int_0^t\dot{\theta}(t')\|\ektp\D_k^{\rm
h}u_\Phi(t')\|_{L^2}^2\,dt'+\f12\|\ektp\D_k^\h\p_y\up\|_{L^2_t(L^2)}^2\\
\leq & \bigl\|e^{a|D_x|}\D_k^\h u_0\bigr\|_{L^2}^2+\int_0^t\bigl|\bigl(\ektp\D_k^\h\left(u\p_xu\right)_\Phi | \ektp\D_k^\h u_\Phi\bigr)_{L^2}\bigr|\,dt'\\
&\qquad\qquad\qquad\quad+\int_0^t\bigl|\bigl(\ektp\D_k^\h\left(v\p_yu\right)_\Phi | \ektp\D_k^\h u_\Phi\bigr)_{L^2}\bigr|\,dt'.
\end{split}
\eeq

In what follows, we shall always assume that $t<T^\star$ with
$T^\star$ being determined by \beq\label{eq4.3} T^\star\eqdefa
\sup\bigl\{\ t>0,\ \ \theta(t) <a/\lam\bigr\}. \eeq So that by
virtue of \eqref{S4eq1}, for any $t\leq T^\star,$ there holds the
following convex inequality \beq\label{eq4.4} \Phi(t,\xi)\leq
\Phi(t,\xi-\eta)+\Phi(t,\eta)\quad\mbox{for}\quad \forall\
\xi,\eta\in \R. \eeq
Then we deduce from Lemma \ref{lem3.1} that for any $s\in ]0,1]$ and $t\leq T^\star$
\beno
\int_0^t\bigl|\bigl(\ektp\D_k^\h(u\p_x u)_\Phi\ |\ \ektp\D_k^\h u_\Phi\bigr)_{L^2}\bigr|\,dt'\lesssim d_k^2
2^{-2ks}\|\ektp u_\Phi\|_{\wt{L}^2_{t,\dot{\theta}(t)}(\cB^{s+\frac12})}^2.
\eeno
Whereas it follows from Lemma \ref{lem3.2} that for any $s\in ]0,1]$ and $t\leq T^\star$
\beno
\int_0^t\bigl|\bigl(\ektp\D_k^\h(v\p_y u)_\Phi\ |\ \ektp\D_k^\h u_\Phi\bigr)_{L^2}\bigr|\,dt'\lesssim d_k^2
2^{-2ks}\|\ektp u_\Phi\|_{\wt{L}^2_{t,\dot{\theta}(t)}(\cB^{s+\frac12})}^2.
\eeno
Inserting the above estimates into \eqref{S4eq6} gives rise to
\beno
\begin{split}
\f12\|\ektp\D_k^{\rm h}u_\Phi\|_{L^\infty_t(L^2)}^2+\lam2^k\int_0^t&\dot{\theta}(t')\|\ektp\D_k^{\rm
h}u_\Phi(t')\|_{L^2}^2\,dt'+\f12\|\ektp\D_k^\h\p_y\up\|_{L^2_t(L^2)}^2\\
&\qquad\quad\leq  \bigl\|e^{a|D_x|}\D_k^\h u_0\bigr\|_{L^2}^2+Cd_k^22^{-2ks}\|\ektp u_\Phi\|_{\wt{L}^2_{t,\dot{\theta}(t)}(\cB^{s+\frac12})}^2.
\end{split}
\eeno
Then for any $s\in ]0,1],$ by multiplying the above inequality by $2^{2ks}$ and then
taking square root of the resulting inequality, and finally by summing up the resulting ones over $\Z,$ we obtain
\beno
\begin{split}
\|\ektp u_\Phi\|_{\wt{L}^\infty_t(\cB^{s})}+\sqrt{\lam}\|\ektp u_\Phi\|_{\wt{L}^2_{t,\dot\theta(t)}(\cB^{s+\f12})}
&+\|\ektp\p_y u_\Phi\|_{\wt{L}^2_t(\cB^{s})}\\
& \leq \bigl\|e^{a|D_x|}u_0\bigr\|_{\cB^{s}}
+C\|\ektp u_\Phi\|_{\wt{L}^2_{t,\dot\theta(t)}(\cB^{s+\f12})}.
\end{split}
\eeno
Taking $\lam =C^2$ in the above inequality leads to
\beq\label{S4eq10}
\|\ektp u_\Phi\|_{\wt{L}^\infty_t(\cB^{s})}
+\|\ektp\p_y u_\Phi\|_{\wt{L}^2_t(\cB^{s})}\leq \bigl\|e^{a|D_x|}u_0\bigr\|_{\cB^{s}}\quad\mbox{for} \ \ s\in ]0,1]\andf t\leq T^\star.
\eeq
In particular, we deduce from \eqref{S4eq10} for $s=\f12$ and \eqref{def:theta} that
\beno
\begin{split}
\theta(t)=&\int_0^t\|\p_y u_\Phi(t')\|_{\cB^{\f12}}\,dt'\\
\leq &\Bigl(\int_0^te^{-2\frak{K}t'}\,dt'\Bigr)^{\f12}\Bigl(\int_0^t\|\ektp\p_y u_\Phi(t')\|_{\cB^{\f12}}^2\,dt'\Bigr)^{\f12}\\
\leq &C\|\ektp\p_y u_\Phi\|_{\wt{L}^2_t(\cB^{\f12})}\leq C\bigl\|e^{a|D_x|}u_0\bigr\|_{\cB^{\f12}}.
\end{split}
\eeno
Then if we take $c_1$ in \eqref{S1eq8a} to be so small that
\beq\label{S3eq19}
C\bigl\|e^{a|D_x|}u_0\bigr\|_{\cB^\f12}\leq \f{a}{2\lam},
\eeq
we deduce by a continuous argument that $T^\star$ determined by \eqref{eq4.3} equals $+\infty$ and \eqref{S1eq12} holds.
Then Theorem \ref{th1.2} is proved provided that we present the proof of \eqref{S1eq13}, which replies on the the following propositions.

\begin{prop}\label{prop4.1}
{\sl Under the assumption of \eqref{S1eq14}, for any $s>0,$ there exists a positive constant $C$ so that for $\lam=C^2\bigl(1+\bigl\|e^{a|D_x|}u_0\bigr\|_{\cB^{\f32}}\bigr),$ there holds
\beq\label{S4eq12}
\|\ekt u_\Phi\|_{\wt{L}^\infty(\R^+;\cB^{s})}+\|\ekt\p_y u_\Phi\|_{\wt{L}^2(\R^+;\cB^{s})} \leq C\bigl\|e^{a|D_x|}u_0\bigr\|_{\cB^{s}}
\eeq}
\end{prop}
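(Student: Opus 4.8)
The plan is to upgrade the regularity estimate \eqref{S4eq10}, which holds for $s\in ]0,1]$, to arbitrary $s>0$, at the price of taking the analyticity-loss parameter $\lam$ proportional to $1+\|e^{a|D_x|}u_0\|_{\cB^{3/2}}$. The starting point is the same frequency-localized energy identity \eqref{S4eq5}: I would apply $\D_k^{\rm h}$ to \eqref{S4eq2}, pair with $\D_k^{\rm h}u_\Phi$, kill the pressure term exactly as in the proof of Theorem \ref{th1.2} using $\pa_xu+\pa_yv=0$ together with $\pa_yp=0$, invoke the Poincar\'e inequality \eqref{S3eq5a}, multiply by $e^{2\frak{K}t}$, and integrate in time to reach an analogue of \eqref{S4eq6}. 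The whole difficulty is therefore concentrated in re-estimating the two convection terms $(u\pa_xu)_\Phi$ and $(v\pa_yu)_\Phi$ for $s>1$, since Lemmas \ref{lem3.1} and \ref{lem3.2} as stated are restricted to $s\in ]0,1]$.

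The obstruction to pushing $s$ past $1$ is purely a high-frequency paraproduct issue. In the proofs of Lemmas \ref{lem3.1} and \ref{lem3.2}, the restriction $s\le 1$ entered only through the low-frequency coefficient $\sum_{\ell\le k'-2}d_\ell 2^{\ell(1-s)}\lesssim 2^{k'(1-s)}$, which requires $1-s\ge 0$ for the geometric sum over $\ell$ to converge at the top frequency. To remove this, I would redistribute the derivatives in Bony's decomposition \eqref{Bony}: instead of placing all the regularity on the function being paraproduct-localized, I would estimate the ``bad'' term $T^\h_{\pa_xw}u$ (resp. $T^\h_{\pa_yu}v$) by keeping the $L^\infty$ bound \eqref{S3eq8} on the smooth factor $u$ and measuring $u_\Phi$ itself in $\cB^{s+1/2}$ while measuring the remaining factor in the fixed space $\cB^{3/2}$. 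Concretely, the plan is to prove a product estimate of the schematic form
\beno
\int_0^t\bigl|\bigl(\ektp\D_k^\h(u\pa_x u)_\Phi\mid \ektp\D_k^\h u_\Phi\bigr)_{L^2}\bigr|\,dt'
\lesssim d_k^2 2^{-2ks}\|\ektp u_\Phi\|_{\cB^{3/2}}\,\|\ektp u_\Phi\|_{\wt{L}^2_{t,\dot\theta(t)}(\cB^{s+1/2})}^2,
\eeno
and similarly for $(v\pa_yu)_\Phi$, where the extra factor $\|u_\Phi\|_{\cB^{3/2}}$ is controlled uniformly in time by \eqref{S4eq10} with $s=1$ and the smallness hypothesis \eqref{S1eq14}. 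The point is that \eqref{S4eq10} already supplies a uniform bound $\|\ekt u_\Phi\|_{\wt L^\infty(\cB^{3/2})}\lesssim \|e^{a|D_x|}u_0\|_{\cB^{3/2}}$, so this coefficient is an admissible constant, and it is exactly this factor that forces the larger choice $\lam=C^2(1+\|e^{a|D_x|}u_0\|_{\cB^{3/2}})$ when one closes the estimate.

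Granting such product laws, the rest is the same scheme as in Theorem \ref{th1.2}: inserting them into the $s>0$ analogue of \eqref{S4eq6}, multiplying by $2^{2ks}$, taking square roots, summing over $k\in\Z$, and absorbing the $\wt{L}^2_{t,\dot\theta(t)}(\cB^{s+1/2})$ term on the left by choosing $\lam=C^2(1+\|e^{a|D_x|}u_0\|_{\cB^{3/2}})$. This yields, for every $t\le T^\star$,
\beno
\|\ektp u_\Phi\|_{\wt{L}^\infty_t(\cB^{s})}+\|\ektp\pa_y u_\Phi\|_{\wt{L}^2_t(\cB^{s})}\leq C\bigl\|e^{a|D_x|}u_0\bigr\|_{\cB^{s}},
\eeno
and since \eqref{S1eq12} already guarantees $T^\star=+\infty$, this is precisely \eqref{S4eq12}. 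The main obstacle I anticipate is the bookkeeping in the new product estimate for $s>1$: I must verify that measuring the paraproduct factors in $\cB^{3/2}$ (rather than $\cB^{1/2}$) still closes the summation over $k'$ in both the paraproduct and remainder pieces for all $s>0$, and that the time-weight $\dot\theta(t)$ is correctly split between the two square-root factors via H\"older so that Definition \ref{def1.1} applies verbatim. The borderline case is the high-high-to-low remainder $R^\h$, where convergence of $\sum_{k'\ge k-3}d_{k'}2^{(k-k')s}$ still only needs $s>0$, so no new restriction appears there.
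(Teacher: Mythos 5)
Your overall strategy --- redo the paraproduct estimates for the two convection terms so that they hold for all $s>0$ at the price of a $\cB^{\f32}$ factor, then absorb that factor by enlarging $\lam$ --- is indeed the paper's strategy, and your diagnosis that the obstruction is the low-frequency sum $\sum_{\ell\le k'-2}d_\ell 2^{\ell(1-s)}$ is accurate. But the key new estimate you propose is not the right one, and as stated it would not close. First, you have located the obstruction in the wrong paraproduct pieces: for $u\p_xu$ the term $T^\h_{\p_xu}u$ needs no $\cB^{\f32}$ factor at all (Remark \ref{rmk3.1} already gives \eqref{S3eq7a} for every $s>0$ by putting $S^\h_{k'-1}\p_xu_\Phi$ in $L^\infty$ via \eqref{S3eq8}), while for $v\p_yu$ the problematic piece is $T^\h_v\p_yu$ (through \eqref{S3eq12}), not $T^\h_{\p_yu}v$. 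Second, your schematic bound $d_k^22^{-2ks}\|u_\Phi\|_{\cB^{3/2}}\|u_\Phi\|^2_{\wt{L}^2_{t,\dot\theta(t)}(\cB^{s+1/2})}$ is quartic in $u$ against a trilinear left-hand side, and even after correcting the homogeneity it is not provable for $T^\h_v\p_yu$ when $s>1$: placing both the $v$-factor and the $\p_yu$-factor into the $\dot\theta$-weighted $\cB^{s+\f12}$ norm reproduces exactly the divergent sum $\sum_{\ell\le k'-2}d_\ell2^{\ell(1-s)}$ you set out to avoid. The paper's fix has a different structure: it uses $\|\D_k^\h v_\Phi\|_{L^\infty}\lesssim d_k2^{k/2}\|u_\Phi\|^{1/2}_{\cB^{3/2}}\|\p_yu_\Phi\|^{1/2}_{\cB^{1/2}}$, which splits the time weight and produces the mixed term $\|u_\Phi\|^{1/2}_{\wt{L}^\infty_t(\cB^{3/2})}\|\p_yu_\Phi\|_{\wt{L}^2_t(\cB^s)}\|u_\Phi\|_{\wt{L}^2_{t,\dot\theta(t)}(\cB^{s+1/2})}$ of \eqref{S4eq16}; one then needs an additional Young's inequality to send half of $\|\p_yu_\Phi\|_{\wt{L}^2_t(\cB^s)}$ into the dissipation on the left and the rest into $\sqrt\lam\,\|u_\Phi\|_{\wt{L}^2_{t,\dot\theta(t)}(\cB^{s+1/2})}$. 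This step is essential because $\|e^{a|D_x|}u_0\|_{\cB^{3/2}}$ is allowed to be large.

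There is also a logical gap in how you control the coefficient: your claim that $\|u_\Phi\|_{\wt{L}^\infty_t(\cB^{3/2})}$ is ``controlled by \eqref{S4eq10} with $s=1$'' is false, since \eqref{S4eq10} only covers $s\in]0,1]$ and the $\cB^{3/2}$ norm is not dominated by the $\cB^{1}$ norm. The correct argument is a bootstrap: one first closes the estimate under the provisional hypothesis $\lam\ge C^2(1+\|u_\Phi\|_{\wt{L}^\infty_t(\cB^{3/2})})$ (this is \eqref{S4eq18}), which yields \eqref{S4eq17}; applying \eqref{S4eq17} at $s=\f32$ gives $\|u_\Phi\|_{\wt{L}^\infty_t(\cB^{3/2})}\le C\|e^{a|D_x|}u_0\|_{\cB^{3/2}}$, and only then is the choice $\lam=C^2(1+\|e^{a|D_x|}u_0\|_{\cB^{3/2}})$ self-consistent; the hypothesis \eqref{S1eq14} is what guarantees that this larger $\lam$ is still compatible with $\theta(t)<a/\lam$ for all time. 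Without these two repairs --- the correct trilinear structure for $T^\h_v\p_yu$ and the self-consistency argument for the $\cB^{3/2}$ coefficient --- the proposal does not yield \eqref{S4eq12}.
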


\begin{prop}\label{prop4.2}
{\sl Under the assumption of \eqref{S1eq14}, for any $s>0,$ there exists a positive constant $C$ so that for $\lam=C^2\bigl(1+\bigl\|e^{a|D_x|}u_0\bigr\|_{\cB^{\f32}}\bigr),$ there holds 
\beq\label{S4eq13}
\|\ektp \p_y u_\Phi\|_{\wt{L}^\infty_t(\cB^{s})}
+\|\ektp\p_y^2 u_\Phi\|_{\wt{L}^2_t(\cB^{s})}\\
 \ \leq
C\Bigl(\bigl\|e^{a|D_x|}\p_yu_0\bigr\|_{\cB^{s}}
+\bigl\|e^{a|D_x|}u_0\bigr\|_{\cB^{s+1}}\Bigr).
\eeq
}
\end{prop}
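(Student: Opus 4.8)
The plan is to differentiate the first equation of \eqref{S1eq6} in $y$ and to run, for the new unknown $w:=\p_yu$, the same weighted dyadic energy scheme used for Theorem \ref{th1.2}. Since $\p_yp=0$, the hydrostatic pressure drops out of the interior, as $\p_y\p_xp=\p_x\p_yp=0$; and because $\p_yv=-\p_xu$, the two differentiated convection terms collapse, the pieces $(\p_xu)w$ cancelling, to give $\p_y(u\p_xu)+\p_y(v\p_yu)=u\p_xw+v\p_yw$. Hence, after conjugation by $\Phi$, the function $w_\Phi$ solves an equation of exactly the form \eqref{S4eq2} but \emph{without} the pressure term: $\p_tw_\Phi+\lam\dot\tht|D_x|w_\Phi+(u\p_xw)_\Phi+(v\p_yw)_\Phi-\p_y^2w_\Phi=0$, with initial datum $w_0=\p_yu_0$. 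I would then apply $\D_k^\h$, pair with $\D_k^\h w_\Phi$ in $L^2(\cS)$, multiply by $e^{2\frak{K}t}$ and integrate in time, reproducing the analogue of \eqref{S4eq5}--\eqref{S4eq6}.

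The two nonlinear contributions are handled by the lemmas already proved: $\bigl(\D_k^\h(u\p_xw)_\Phi\,|\,\D_k^\h w_\Phi\bigr)_{L^2}$ is controlled verbatim by Lemma \ref{lem3.1}, while $\bigl(\D_k^\h(v\p_yw)_\Phi\,|\,\D_k^\h w_\Phi\bigr)_{L^2}$ is controlled by the argument of Lemma \ref{lem3.2} with $u$ replaced throughout by $w$, the factor $v=-\int_0^y\p_xu\,dy'$ still being estimated through $u$ by \eqref{S3eq11}; the $u_\Phi$-norms that appear are furnished by Proposition \ref{prop4.1}. Multiplying by $2^{ks}$, summing over $k\in\Z$ and fixing $\lam=C^2\bigl(1+\|e^{a|D_x|}u_0\|_{\cB^{3/2}}\bigr)$ large then absorbs the $\dot\tht$-weighted terms exactly as in the passage leading to \eqref{S4eq10}.

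The one genuinely new difficulty is the boundary term. Integrating $-\p_y^2w_\Phi$ against $\D_k^\h w_\Phi$ by parts in $y$ leaves a contribution at $y=0,1$ governed by $\p_yw|_{\partial}=\p_y^2u|_{\partial}$; evaluating \eqref{S1eq6} on the walls, where $u=v=0$, gives $\p_y^2u|_{\partial}=\p_xp$, the same value at both walls since $p$ is independent of $y$. Writing $\gamma:=w|_{y=1}-w|_{y=0}=\int_0^1\p_yw\,dy$ and using that $\p_xp$ does not depend on $y$, the boundary term equals $-\int_\cS\p_xp\,\p_yw$, so that diffusion plus boundary reorganizes (after $\D_k^\h$ and conjugation) into $\|\p_y\D_k^\h w_\Phi\|_{L^2}^2-\|\D_k^\h\gamma_\Phi\|_{L^2_\h}^2+\bigl(\D_k^\h Q_\Phi\,|\,\D_k^\h\gamma_\Phi\bigr)_{L^2_\h}$, where $Q:=\p_x\int_0^1u^2\,dy$ and I have used the vertically averaged balance $\p_xp=(w|_{y=1}-w|_{y=0})-\p_x\int_0^1u^2\,dy$ derived before \eqref{S4eq1}. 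The first two terms are nonnegative by the Cauchy--Schwarz inequality in $y$, so no bad-sign principal term survives; the residual $\bigl(\D_k^\h Q_\Phi\,|\,\D_k^\h\gamma_\Phi\bigr)_{L^2_\h}$ is quadratic, bounded by the product laws and absorbed through the smallness \eqref{S1eq14}, and the single $\p_x$ carried by $Q$ is exactly what forces the $\cB^{s+1}$-norm of the data on the right-hand side of \eqref{S4eq13} (accessed via Proposition \ref{prop4.1}).

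The step I expect to require the most care is turning the mere nonnegativity of $\|\p_y\D_k^\h w_\Phi\|_{L^2}^2-\|\D_k^\h\gamma_\Phi\|_{L^2_\h}^2$ into the coercive inequality that replaces \eqref{S3eq5a} and yields the weight $\ekt$. Equality in Cauchy--Schwarz holds precisely for $y$-affine $w$, on which the effective dissipation degenerates; however, the compatibility hypothesis $\p_x\int_0^1u_0\,dy=0$, propagated to $\p_x\int_0^1u\,dy=0$, forces both $\int_0^1w\,dy=u|_{y=1}-u|_{y=0}=0$ (using \eqref{S1eq7}) and $\int_0^1y\,w\,dy=0$, i.e.\ $w$ is $L^2(0,1)$-orthogonal to all affine functions of $y$; these orthogonalities are preserved by $\D_k^\h$. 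On that subspace the degenerate mode is removed and one has $\|\p_y\D_k^\h w_\Phi\|_{L^2}^2-\|\D_k^\h\gamma_\Phi\|_{L^2_\h}^2\ge c\,\|\D_k^\h w_\Phi\|_{L^2}^2$ with a positive gap $c$, which simultaneously controls the effective dissipation and supplies the $\frak{K}$-factor. Granting this, taking square roots after the $2^{ks}$-weighting, summing over $\Z$ and running the continuation argument on $[0,T^\star)$ of \eqref{eq4.3} deliver \eqref{S4eq13}.
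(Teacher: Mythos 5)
Your overall strategy---differentiate \eqref{S1eq6} in $y$, use $\p_yv=-\p_xu$ and $\p_yp=0$ to reduce the equation for $w=\p_yu$ to $\p_tw+u\p_xw+v\p_yw-\p_y^2w=0$, and rerun the weighted dyadic energy scheme---is exactly the paper's. Your analysis of the boundary contribution of the dissipation (via $\p_y^2u|_{y=0,1}=\p_xp$, the vertically averaged pressure identity, and the orthogonality relations $\int_0^1w\,dy=\int_0^1y\,w\,dy=0$ that restore coercivity on the relevant subspace) is correct and in fact more careful than the paper, which passes from the differentiated equation to \eqref{S4eq19} without comment on these terms.

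The gap is in the interior nonlinear estimates. First, Lemma \ref{lem3.1} is only proved for $s\in]0,1]$ (the term $T^\h_{\p_xw}u$ forces the convergence of $\sum_{\ell\le k'-2}d_\ell2^{\ell(1-s)}$), whereas the proposition is needed for $s=\f32$; the paper must re-estimate $T^\h_{\p_x\p_yu}u$ separately, as in \eqref{S4eq19a}, exploiting that the low-frequency factor is itself $\p_x\p_yu$, to reach all $s>0$. Second, and more seriously, the argument of Lemma \ref{lem3.2} does \emph{not} survive the substitution $u\mapsto w$. The time weight in every $\wt{L}^2_{t,\dot\theta(t)}$ norm is pinned to $\dot\theta=\|\p_yu_\Phi\|_{\cB^{1/2}}$ by the choice of the phase $\Phi$; in Lemma \ref{lem3.2} this weight is extracted from factors such as $\|\D^\h_{k'}\p_yu_\Psi\|_{L^2}\le d_{k'}2^{-k'/2}\|\p_yu_\Psi\|_{\cB^{1/2}}$, and replacing $u$ by $w$ there produces the weight $\|\p_y^2u_\Phi\|_{\cB^{1/2}}$, which is not $\dot\theta$ and cannot be absorbed by the damping $\lam\dot\theta|D_x|$ (nor is it small). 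One has to redistribute the factors: for $T^\h_v\p_y^2u$ and $R^\h(v,\p_y^2u)$ the paper uses $\|S^\h_{k'-1}v_\Phi\|_{L^\infty}\lesssim d_{k'}2^{k'/2}\|u_\Phi\|^{1/2}_{\cB^{3/2}}\|\p_yu_\Phi\|^{1/2}_{\cB^{1/2}}$, places the resulting half power of $\dot\theta$ on $\|\D_k^\h\p_yu_\Phi\|_{L^2}$, and keeps $\|\p_y^2u_\Phi\|_{\wt{L}^2_t(\cB^{s})}$ unweighted so that it can be absorbed by Young's inequality into the dissipation on the left; and the term $T^\h_{\p_y^2u}v$ requires an integration by parts in $y$, which is precisely where $\|u_\Phi\|_{L^\infty_t(\cB^{s+1})}$---hence $\bigl\|e^{a|D_x|}u_0\bigr\|_{\cB^{s+1}}$---enters. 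Your proposal attributes the $\cB^{s+1}$ norm entirely to the boundary remainder $Q$, but it is already forced by this interior term; as written, your treatment of $v\p_yw$ does not close.
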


We admit the above propositions for the time being and continue our proof of Theorem \ref{th1.2}.

As a matter of fact, it remains to present the estimate of $\|\ekt(\pa_tu)_\Phi\|_{\wt{L}^2(\R^+;\cB^{\f32})}.$  Indeed,
by  applying $\D_k^\h$ to \eqref{S1eq6} and then taking $L^2$ inner product of resulting equation  with  $e^{2\frak{K}t}\D_k^\h(\p_tu)_\Phi,$ we obtain
\beno
\begin{split}
\|\ekt \D_k^\h\left(\p_tu\right)_\Phi\|_{L^2}^2=&e^{2\frak{K}t}\bigl(\D_k^\h\pa_y^2 u_\Phi  | \D_k^\h(\p_tu)_\Phi\bigr)_{L^2}\\
&-e^{2\frak{K}t}\bigl(\D_k^\h(u\pa_x u)_\Phi  | \D_k^\h(\p_tu)_\Phi\bigr)_{L^2}-e^{2\frak{K}t}\bigl(\D_k^\h(v\pa_yu)_\Phi  | \D_k^\h(\p_tu)_\Phi\bigr)_{L^2},\end{split} \eeno
from which, we deduce that
\begin{align*}
\|\ektp \D_k^\h\left(\p_tu\right)_\Phi\|_{L^2_t(L^2)}\leq C\Bigl(&\|\ektp \D_k^\h\p_y^2u_\Phi\|_{L^2_t(L^2)}\\
&+\bigl\|\ektp (u\pa_x u)_\Phi\bigr\|_{L^2_t(L^2)}
+\bigl\|\ektp(v\pa_yu)_\Phi\|_{L^2_t(L^2)}\Bigr).
\end{align*}
This gives rise to
\beq \label{S4eq13a}
\begin{split}
\bigl\|\ektp(\pa_tu)_\Phi\bigr\|_{\wt{L}^2_t(\cB^{\f32})}\leq
C\Bigl(&\bigl\|\ektp\pa_y^2u_\Phi\bigr\|_{\wt{L}^2_t(\cB^{\f32})}\\
&+\bigl\|\ektp(u\p_xu)_\Phi\bigr\|_{\wt{L}^2_t(\cB^{\f32})}
+\bigl\|\ektp(v\p_yu)_\Phi\bigr\|_{\wt{L}^2_t(\cB^{\f32})}\Bigr).
\end{split}
\eeq
Yet it follows from the law of product in anisotropic Besov space and Poincare inequality that
\begin{align*}
\bigl\|\ektp(u\p_xu)_\Phi\bigr\|_{\wt{L}^2_t(\cB^{\f32})}\lesssim &\|u_\Phi\|_{\wt{L}^\infty_t(\cB^{\f12})}\bigl\|\ektp\p_yu_\Phi\bigr\|_{\wt{L}^2_t(\cB^{\f52})};\\
\bigl\|\ektp(v\p_yu)_\Phi\bigr\|_{\wt{L}^2_t(\cB^{\f32})}\lesssim &\|u_\Phi\|_{\wt{L}^\infty_t(\cB^{\f12})}\bigl\|\ektp\p_yu_\Phi\bigr\|_{\wt{L}^2_t(\cB^{\f52})}+
\|u_\Phi\|_{\wt{L}^\infty_t(\cB^{\f52})}\bigl\|\ektp\p_yu_\Phi\bigr\|_{\wt{L}^2_t(\cB^{\f12})}.
\end{align*}
Inserting the above estimates into \eqref{S4eq13a} and then using \eqref{S1eq8a}, \eqref{S1eq12} and Proposition \ref{prop4.1},
we achieve
\beno
\bigl\|\ektp(\pa_tu)_\Phi\bigr\|_{\wt{L}^2_t(\cB^{\f32})}\lesssim \bigl\|e^{a|D_x|}\pa_yu_0\bigr\|_{\cB^{\f32}}+ \bigl\|e^{a|D_x|}u_0\bigr\|_{\cB^{\f52}}.
\eeno
This completes the proof of Theorem \ref{th1.2}.
\end{proof}

Now let us present the proof of the above two propositions.

\begin{proof}[Proof of Proposition \ref{prop4.1}]
We first deduce from Remark \ref{rmk3.1} that for any $s>0$
\beq \label{S4eq15}
\int_0^t\bigl|\bigl(\D_k^\h\left(u\p_xu\right)_\Phi | \D_k^\h u_\Phi\bigr)_{L^2}\bigr|\,dt'\lesssim d_k^2
2^{-2ks}\|u_\Phi\|_{\wt{L}^2_{t,\dot{\theta}(t)}(\cB^{s+\f12})}^2.
\eeq
While it follows from the proof of Lemma \ref{lem3.2} that
\beno
\int_0^t\bigl|\bigl(\D_k^\h(T^\h_{\p_y u}v+R^\h(v,\p_yu))_\Phi|\D_k^\h u_\Phi\bigr)_{L^2}\bigr|\,dt'\lesssim d_k^2
2^{-2ks}\|u_\Phi\|_{\wt{L}^2_{t,\dot{\theta}(t)}(\cB^{s+\f12})}^2.
\eeno
In view of \eqref{S3eq11}, we have
\beno
\|\D_k^\h v_\Phi(t)\|_{L^\infty}\lesssim d_k(t)2^{\f{k}2}\|u_\Phi(t)\|_{\cB^{\f32}}^{\f12}\|\p_yu_\Phi(t)\|_{\cB^{\f12}}^{\f12},
\eeno
so that there holds
\beq \label{S4eq20}
\begin{split}
\int_0^t&\bigl|\bigl(\D_k^{\rm h}(T^\h_{v}\p_yu)_\Phi\ |\ \D_k^{\rm
h}u_\Phi\bigr)_{L^2}\bigr|\,dt'\\
\lesssim & \sum_{|k'-k|\leq 4}\int_0^t\|S_{k'-1}^\h v_\Phi(t')\|_{L^\infty}
\|\D_{k'}^\h\p_yu_\Phi(t)\|_{L^2}\|\D_k^\h u_\Phi(t')\|_{L^2}\,dt'\\
\lesssim & \sum_{|k'-k|\leq 4}2^{\f{k'}2}\|u_\Phi\|_{L^\infty_t(\cB^{\f32})}^{\f12}
\|\D_{k'}^\h\p_yu_\Phi\|_{L^2_t(L^2)}\Bigl(\int_0^t\|\p_yu_\Phi(t')\|_{\cB^{\f12}}\|\D_k^\h u_\Phi(t')\|_{L^2}^2\,dt'\Bigr)^{\f12}\\
\lesssim &d_k^22^{-2ks}\|u_\Phi\|_{L^\infty_t(\cB^{\f32})}^{\f12}\|\p_yu_\Phi\|_{\wt{L}^2_t(\cB^s)}\|u_\Phi\|_{\wt{L}^2_{t,\dot{\theta}(t)}(\cB^{s+\f12})}.
\end{split}
\eeq
As a result, it comes out
\beq \label{S4eq16}
\begin{split}
\int_0^t\bigl|\bigl(\D_k^\h(v\p_y u)_\Phi&|\D_k^\h u_\Phi\bigr)_{L^2}\bigr|\,dt'\lesssim d_k^2
2^{-2ks}\|u_\Phi\|_{\wt{L}^2_{t,\dot{\theta}(t)}(\cB^{s+\f12})}\\
&\times\Bigl(\|u_\Phi\|_{\wt{L}^2_{t,\dot{\theta}(t)}(\cB^{s+\f12})}+\|u_\Phi\|_{\wt{L}^\infty_{t}(\cB^{\f32})}^\f12
\|\pa_yu_\Phi\|_{\wt{L}^2_{t}(\cB^{s})}\Bigr).
\end{split}\eeq

By virtue of \eqref{S4eq15} and \eqref{S4eq16}, we deduce from \eqref{S4eq6} that
\beno
\begin{split}
\f12\|&\ektp\D_k^{\rm h}u_\Phi\|_{L^\infty_t(L^2)}^2+\lam2^k\int_0^t\dot{\theta}(t')\|\ektp\D_k^{\rm
h}u_\Phi(t')\|_{L^2}^2\,dt'+\f12\|\ektp\D_k^\h\p_yu_\Phi\|_{L^2_t(L^2)}^2\\
\leq & \f12\bigl\|e^{a|D_x|}\D_k^\h u_0\bigr\|_{L^2}^2+Cd_k^2
2^{-2ks}\|\ektp u_\Phi\|_{\wt{L}^2_{t,\dot{\theta}(t)}(\cB^{s+\f12})}\\
&\qquad\qquad\qquad\qquad\times\Bigl(\|\ektp u_\Phi\|_{\wt{L}^2_{t,\dot{\theta}(t)}(\cB^{s+\f12})}+\|u_\Phi\|_{\wt{L}^\infty_{t}(\cB^{\f32})}^\f12
\|\ektp \pa_yu_\Phi\|_{\wt{L}^2_{t}(\cB^{s})}\Bigr),
\end{split} \eeno
from which, we infer
\beno
\begin{split}
\|&\ektp u_\Phi\|_{\wt{L}^\infty_t(\cB^{s})}+\sqrt{\lam}\|\ektp u_\Phi\|_{\wt{L}^2_{t,\dot\theta(t)}(\cB^{s+\f12})}
+\|\ektp\p_y u_\Phi\|_{\wt{L}^2_t(\cB^{s})}\leq C\Bigl(\bigl\|e^{a|D_x|}u_0\bigr\|_{\cB^{s}}\\
& \qquad\qquad
+\|\ektp u_\Phi\|_{\wt{L}^2_{t,\dot\theta(t)}(\cB^{s+\f12})}+\|u_\Phi\|_{\wt{L}^\infty_{t}(\cB^{\f32})}^{\f14}
\|\ektp\pa_yu_\Phi\|_{\wt{L}^2_{t}(\cB^{s})}^{\f12}\|\ektp u_\Phi\|_{\wt{L}^2_{t,\dot\theta(t)}(\cB^{s+\f12})}^{\f12}\Bigr).
\end{split}
\eeno
Applying Young's inequality yields
$$\longformule{
C\|u_\Phi\|_{\wt{L}^\infty_{t}(\cB^{\f32})}^{\f14}
\|\ektp\pa_yu_\Phi\|_{\wt{L}^2_{t}(\cB^{s})}^{\f12}\|\ektp u_\Phi\|_{\wt{L}^2_{t,\dot\theta(t)}(\cB^{s+\f12})}^{\f12}}{{}\leq
C\|u_\Phi\|_{\wt{L}^\infty_{t}(\cB^{\f32})}^{\f12}
\|\ektp u_\Phi\|_{\wt{L}^2_{t,\dot\theta(t)}(\cB^{s+\f12})}+\f12\|\ektp\pa_yu_\Phi\|_{\wt{L}^2_{t}(\cB^{s})}.}
$$
Therefore if we take \beq \label{S4eq18} \lam\geq C^2\bigl(1+\|u_\Phi\|_{\wt{L}^\infty_{t}(\cB^{\f32})}\bigr),\eeq
 we obtain
\beq \label{S4eq17}
\|\ektp u_\Phi\|_{\wt{L}^\infty_t(\cB^{s})}
+\|\ektp\p_y u_\Phi\|_{\wt{L}^2_t(\cB^{s})}\leq C\bigl\|e^{a|D_x|}u_0\bigr\|_{\cB^{s}}.
\eeq
which in particular implies that under the condition \eqref{S4eq18}, there holds
\beno
\|u_\Phi\|_{\wt{L}^\infty_t(\cB^{\f32})}
\leq C\bigl\|e^{a|D_x|}u_0\bigr\|_{\cB^{\f32}}.\eeno
Then by taking $\lam=C^2\bigl(1+\bigl\|e^{a|D_x|}u_0\bigr\|_{\cB^{\f32}}\bigr),$ \eqref{S4eq18} holds. Therefore
under the condition \eqref{S1eq14},  both \eqref{S3eq19} and \eqref{S4eq18} hold, and thus  \eqref{S4eq17} holds for any $t>0,$ which leads to
 \eqref{S4eq12}.
This completes the proof of the proposition.
\end{proof}

\begin{proof}[Proof of Proposition \ref{prop4.2}] Due to $\p_xu+\p_yv=0,$ we get, by applying $\p_y$ to \eqref{S1eq6}, that
\beno
\p_t\p_yu+u\p_x\p_yu+v\p_y^2u-\p_y^3u+\p_x\p_yp=0,
\eeno
from which, we get, by using a similar derivation of \eqref{S4eq6}, that
\beq \label{S4eq19}
\begin{split}
\f12\|&\ektp\D_k^{\rm h}\p_yu_\Phi\|_{L^\infty_t(L^2)}^2+\lam2^k\int_0^t\dot{\theta}(t')\|\ektp\D_k^{\rm
h}\p_yu_\Phi(t')\|_{L^2}^2\,dt'+\f12\|\ektp\D_k^\h\p^2_y u_\Phi\|_{L^2_t(L^2)}^2\\
\leq & \f12\bigl\|e^{a|D_x|}\D_k^\h \p_yu_0\bigr\|_{L^2}^2+\int_0^t\bigl|\bigl(\ektp\D_k^\h\left(u\p_x\p_yu\right)_\Phi | \ektp\D_k^\h \p_yu_\Phi\bigr)_{L^2}\bigr|\,dt'\\
&\qquad\qquad\qquad\qquad\qquad+\int_0^t\bigl|\bigl(\ektp\D_k^\h\left(v\p_y^2u\right)_\Phi | \ektp\D_k^\h \p_yu_\Phi\bigr)_{L^2}\bigr|\,dt'.
\end{split}
\eeq
It follows from the proof of Lemma \ref{lem3.1} that for any $s>0$
\beno
\int_0^t\bigl|\bigl(\D_k^\h\bigl(T^\h_u\p_x\p_yu+R^\h(u,\p_x\p_yu) \bigr)_\Phi | \D_k^\h \p_yu_\Phi\bigr)_{L^2}\bigr|\,dt'\lesssim
d_k^22^{-2ks}\|\p_yu_\Phi\|_{\wt{L}^2_{t,\dot{\theta}(t)}(\cB^{s+\f12})}.
\eeno
While we deduce from Lemma \ref{lem:Bern} and Definition \ref{def1.1} that
\beq \label{S4eq19a}
\begin{split}
\int_0^t&\bigl|\bigl(\D_k^{\rm h}(T^\h_{\p_x\p_yu}u)_\Phi\ |\ \D_k^{\rm
h}\p_y u_\Phi\bigr)_{L^2}\bigr|\,dt'\\
\lesssim & \sum_{|k'-k|\leq 4}\int_0^t\|S_{k'-1}^\h \p_x\p_yu_\Phi(t')\|_{L^\infty_\h(L^2_{\rm v})}
\|\D_{k'}^\h u_\Phi(t')\|_{L^2_\h(L^\infty_{\rm v})}\|\D_k^\h \p_y u_\Phi(t')\|_{L^2}\,dt'\\
\lesssim & \sum_{|k'-k|\leq 4}2^{k'}\int_0^t\|\p_y u_\Phi(t')\|_{\cB^{\f12}}\|\D_{k'}^\h \p_y u_\Phi(t')\|_{L^2}\|\D_k^\h \p_y u_\Phi(t')\|_{L^2}\,dt'\\
\lesssim & \sum_{|k'-k|\leq 4}2^{k'}\Bigl(\int_0^t\|\p_yu_\Phi(t')\|_{\cB^{\f12}}\|\D_{k'}^\h \p_yu_\Phi(t')\|_{L^2}^2\,dt'\Bigr)^{\f12}\\
&\qquad\qquad\qquad\qquad\times\Bigl(\int_0^t\|\p_yu_\Phi(t')\|_{\cB^{\f12}}\|\D_k^\h \p_yu_\Phi(t')\|_{L^2}^2\,dt'\Bigr)^{\f12}\\
\lesssim &d_k^22^{-2ks}\|\p_yu_\Phi\|_{\wt{L}^2_{t,\dot{\theta}(t)}(\cB^{s+\f12})}^2.
\end{split}
\eeq
As a result,  it comes out that for any $s>0,$
\beq \label{S4eq21}
\begin{split}
\int_0^t\bigl|\bigl(\D_k^\h(u\p_x\p_yu)_\Phi |& \D_k^\h \p_yu_\Phi\bigr)_{L^2}\bigr|\,dt'\lesssim
d_k^22^{-2ks}\|\p_yu_\Phi\|_{\wt{L}^2_{t,\dot{\theta}(t)}(\cB^{s+\f12})}^2.
\end{split}
\eeq
On the other hand, we deduce from  Lemma \ref{lem:Bern} and \eqref{S3eq11} that for any $s>0$
\begin{align*}
\int_0^t\bigl|&\bigl(\D_k^\h\bigl(R^\h(v,\p_y^2u) \bigr)_\Phi | \D_k^\h \p_yu_\Phi\bigr)_{L^2}\bigr|\,dt'\\
\lesssim &2^{\f{k}2}\sum_{k'\geq k-3}\int_0^t\|\D_{k'}^\h v_\Phi(t')\|_{L^2_\h(L^\infty_{\rm v})}\|\wt{\D}_{k'}^\h \p_y^2u_\Phi(t')\|_{L^2}
\|{\D}_{k'}^\h \p_yu_\Phi(t')\|_{L^2}\,dt'\\
\lesssim &2^{\f{k}2}\sum_{k'\geq k-3}\int_0^t\|u_\Phi(t')\|_{\cB^{\f32}}^{\f12}\|\p_y u_\Phi(t')\|_{\cB^{\f12}}^{\f12}\|\wt{\D}_{k'}^\h \p_y^2u_\Phi(t')\|_{L^2}
\|{\D}_{k'}^\h \p_yu_\Phi(t')\|_{L^2}\,dt'\\
\lesssim &2^{\f{k}2}\sum_{k'\geq k-3}\|u_\Phi\|_{L^\infty_t(\cB^{\f32})}^{\f12}\|\wt{\D}_{k'}^\h \p_y^2u_\Phi\|_{L^2_t(L^2)}
\Bigl(\int_0^t\|\p_y u_\Phi(t')\|_{\cB^{\f12}}
\|{\D}_{k'}^\h \p_yu_\Phi(t')\|_{L^2}^2\,dt'\Bigr)^{\f12}\\
\lesssim &
d_k^22^{-2ks}\|u_\Phi\|_{L^\infty_t(\cB^{\f32})}^{\f12}\|\p_y^2u_\Phi\|_{\wt{L}^2_{t}(\cB^{s})}\|\p_yu_\Phi\|_{\wt{L}^2_{t,\dot{\theta}(t)}(\cB^{s+\f12})}.
\end{align*}
And the proof of \eqref{S4eq20} ensures that
\beno
\begin{split}
\int_0^t\bigl|\bigl(\D_k^{\rm h}(T^\h_{v}\p_y^2u)_\Phi\ |&\ \D_k^{\rm
h}\p_yu_\Phi\bigr)_{L^2}\bigr|\,dt'\\
\lesssim &d_k^22^{-2ks}\|u_\Phi\|_{L^\infty_t(\cB^{\f32})}^{\f12}\|\p_y^2u_\Phi\|_{\wt{L}^2_t(\cB^s)}\|\p_yu_\Phi\|_{\wt{L}^2_{t,\dot{\theta}(t)}(\cB^{s+\f12})}.
\end{split}
\eeno
Finally, by using integration by parts, we have
\begin{align*}
\int_0^t\bigl|\bigl(\D_k^\h\bigl(T^\h_{\p_y^2u}v\bigr)_\Phi | \D_k^\h \p_yu_\Phi\bigr)_{L^2}\bigr|\,dt'\le&
\int_0^t\bigl|\bigl(\D_k^\h\bigl(T^\h_{\p_yu}\pa_yv\bigr)_\Phi | \D_k^\h \p_yu_\Phi\bigr)_{L^2}\bigr|\,dt\\&+
\int_0^t\bigl|\bigl(\D_k^\h\bigl(T^\h_{\p_yu}v\bigr)_\Phi | \D_k^\h \p_y^2u_\Phi\bigr)_{L^2}\bigr|\,dt.
\end{align*}
Due to $\p_xu+\p_yv=0,$ we deduce from a similar derivation of \eqref{S4eq19a} that
\begin{align*}
\int_0^t\bigl|&\bigl(\D_k^\h\bigl(T^\h_{\p_yu}\pa_yv\bigr)_\Phi | \D_k^\h \p_yu_\Phi\bigr)_{L^2}\bigr|\,dt\\
\lesssim& \sum_{|k'-k|\leq 4}\int_0^t\|S_{k'-1}^\h\p_yu_\Phi(t')\|_{L^\infty_\h(L^2_{\rm v})}\|\D_{k'}^\h \p_xu_\Phi(t')\|_{L^2_\h(L^\infty_{\rm v})}
\|\D_k^\h\p_y u_\Phi(t')\|_{L^2}\,dt'\\
\lesssim& \sum_{|k'-k|\leq 4}2^{k'}\int_0^t\|\p_y u_\Phi(t')\|_{\cB^{\f12}}\|\D_{k'}^\h \p_y u_\Phi(t')\|_{L^2}
\|\D_k^\h\p_y u_\Phi(t')\|_{L^2}\,dt'\\
\lesssim&  d_k^22^{-2ks}\|\p_yu_\Phi\|_{\wt{L}^2_{t,\dot{\theta}(t)}(\cB^{s+\f12})}^2.
\end{align*}
While we observe that
\begin{align*}
\int_0^t\bigl|&\bigl(\D_k^\h\bigl(T^\h_{\p_yu}v\bigr)_\Phi | \D_k^\h \p_y^2u_\Phi\bigr)_{L^2}\bigr|\,dt\\
\lesssim&  \sum_{|k'-k|\leq 4}\int_0^t\|S_{k'-1}^\h\p_yu_\Phi(t')\|_{L^\infty_\h(L^2_{\rm v})}\|\D_{k'}^\h v_\Phi(t')\|_{L^2_\h(L^\infty_{\rm v})}
\|\D_k^\h\p_y^2 u_\Phi(t')\|_{L^2}\,dt'\\
\lesssim& \sum_{|k'-k|\leq 4}2^{k'}\int_0^t\|\p_y u_\Phi(t')\|_{\cB^{\f12}}\|\D_{k'}^\h  u_\Phi(t')\|_{L^2}
\|\D_k^\h\p_y^2 u_\Phi(t')\|_{L^2}\,dt'\\
\lesssim&  d_k^22^{-2ks}\|\pa_yu_\Phi\|_{\wt{L}^2_{t}(\cB^{\f12})}
\|u_\Phi\|_{{L}^\infty_{t}(\cB^{s+1})}\|\pa_y^2u_\Phi\|_{\wt{L}^2_{t}(\cB^{s})}.
\end{align*}
This gives rise to
\begin{align*}
\int_0^t\bigl|\bigl(\D_k^\h\bigl(T^\h_{\p_y^2u}v\bigr)_\Phi | \D_k^\h \p_yu_\Phi\bigr)_{L^2}\bigr|\,dt'\lesssim &
d_k^22^{-2ks}\Bigl(\|\p_yu_\Phi\|_{\wt{L}^2_{t,\dot{\theta}(t)}(\cB^{s+\f12})}^2\\
&+\|\pa_yu_\Phi\|_{\wt{L}^2_{t}(\cB^{\f12})}
\|u_\Phi\|_{{L}^\infty_{t}(\cB^{s+1})}\|\pa_y^2u_\Phi\|_{\wt{L}^2_{t}(\cB^{s})}\Bigr).
\end{align*}

By summarizing the above estimates, we obtain
\beq \label{S4eq22}
\begin{split}
\int_0^t\bigl|\bigl(\D_k^\h(&v\p_y^2u)_\Phi | \D_k^\h \p_yu_\Phi\bigr)_{L^2}\bigr|\,dt'\\
\lesssim &
d_k^22^{-2ks}\Bigl(\|u_\Phi\|_{L^\infty_t(\cB^{\f32})}^{\f12}\|\p_yu_\Phi\|_{\wt{L}_{t,\dot{\theta}(t)}(\cB^{s+\f12})}
\|\p_y^2u_\Phi\|_{\wt{L}^2_t(\cB^s)}\\
&\qquad+\|\p_yu_\Phi\|_{\wt{L}_{t,\dot{\theta}(t)}(\cB^{s+\f12})}^2+\|\pa_yu_\Phi\|_{\wt{L}^2_{t}(\cB^{\f12})}
\|u_\Phi\|_{{L}^\infty_{t}(\cB^{s+1})}\|\pa_y^2u_\Phi\|_{\wt{L}^2_{t}(\cB^{s})}\Bigr).
\end{split}
\eeq

By inserting \eqref{S4eq21} and \eqref{S4eq22} into \eqref{S4eq19} and then repeating the last step of the
proof of Proposition \ref{prop4.1}, we obtain
\beno
\begin{split}
&\|\ektp \p_y u_\Phi\|_{\wt{L}^\infty_t(\cB^{s})}+\sqrt{\lam}\|\ektp \p_y u_\Phi\|_{\wt{L}^2_{t,\dot\theta(t)}(\cB^{s+\f12})}
+\|\ektp\p_y^2 u_\Phi\|_{\wt{L}^2_t(\cB^{s})}\\
& \ \leq \bigl\|e^{a|D_x|}\p_yu_0\bigr\|_{\cB^{s}}
+C\Bigl(\|\ektp u_\Phi\|_{\wt{L}^2_{t,\dot\theta(t)}(\cB^{s+\f12})}
+\bigl(\|u_\Phi\|_{L^\infty_t(\cB^{\f32})}^{\f14}\|\ektp\p_yu_\Phi\|_{\wt{L}_{t,\dot{\theta}(t)}(\cB^{s+\f12})}^{\f12}\\
&\qquad\qquad\qquad\qquad\qquad\qquad\qquad+\|\pa_yu_\Phi\|_{\wt{L}^2_{t}(\cB^{\f12})}^{\f12}
\|\ektp u_\Phi\|_{{L}^\infty_{t}(\cB^{s+1})}^{\f12}\bigr)
\|\ektp\p_y^2u_\Phi\|_{\wt{L}^2_t(\cB^s)}^{\f12}\Bigr).
\end{split}
\eeno
Applying Young's inequality yields
\beno
\begin{split}
&\|\ektp \p_y u_\Phi\|_{\wt{L}^\infty_t(\cB^{s})}+\sqrt{\lam}\|\ektp \p_y u_\Phi\|_{\wt{L}^2_{t,\dot\theta(t)}(\cB^{s+\f12})}
+\|\ektp\p_y^2 u_\Phi\|_{\wt{L}^2_t(\cB^{s})}\\
& \ \leq \bigl\|e^{a|D_x|}\p_yu_0\bigr\|_{\cB^{s}}
+C\Bigl(\bigl(1+\|u_\Phi\|_{L^\infty_t(\cB^{\f32})}^{\f12}\bigr)\|\ektp u_\Phi\|_{\wt{L}^2_{t,\dot\theta(t)}(\cB^{s+\f12})}
\\
&\qquad\qquad\qquad\qquad\qquad+\|\pa_yu_\Phi\|_{\wt{L}^2_{t}(\cB^{\f12})}
\|\ektp u_\Phi\|_{{L}^\infty_{t}(\cB^{s+1})}\bigr)+\f12
\|\ektp\p_y^2u_\Phi\|_{\wt{L}^2_t(\cB^s)},
\end{split}
\eeno
from which, \eqref{S1eq8a}, \eqref{S1eq12} and Proposition \ref{prop4.1}, we infer
\begin{align*}
&\|\ektp \p_y u_\Phi\|_{\wt{L}^\infty_t(\cB^{s})}+\sqrt{\lam}\|\ektp \p_y u_\Phi\|_{\wt{L}^2_{t,\dot\theta(t)}(\cB^{s+\f12})}
+\|\ektp\p_y^2 u_\Phi\|_{\wt{L}^2_t(\cB^{s})}\\
& \ \leq \bigl\|e^{a|D_x|}\p_yu_0\bigr\|_{\cB^{s}}
+C\Bigl(\bigl(1+\bigl\|e^{a|D_x|}u_0\bigr\|_{\cB^{\f32}}^{\f12}\bigr)\|\ektp u_\Phi\|_{\wt{L}^2_{t,\dot\theta(t)}(\cB^{s+\f12})}
+\bigl\|e^{a|D_x|}u_0\bigr\|_{\cB^{s+1}}\Bigr).
\end{align*}
Taking $\lam=C^2\bigl(1+\bigl\|e^{a|D_x|}u_0\bigr\|_{\cB^{\f32}}\bigr)$ in the above inequality leads to
 \eqref{S4eq13}.
This completes the proof of Proposition \ref{prop4.2}.
\end{proof}

\renewcommand{\theequation}{\thesection.\arabic{equation}}
\setcounter{equation}{0}
\section{The Convergence to the hydrostatic Navier-Stokes system}

In this section, we justify the limit from the scaled anisotropic Navier-Stokes system
 to the hydrostatic Navier-Stokes system in a 2-D striped domain. To this end, we introduce
\beno
w^1_\e\eqdefa u^\e-u,\quad w^2_\e\eqdefa v^\e-v,\quad q_\e\eqdefa p^\e-p.
\eeno
Then $(w^1_\e,w^2_\e,q_\e)$ verifies
\begin{equation}\label{S5eq1}
 \quad\left\{\begin{array}{l}
\displaystyle \p_tw^1_\e-\e^2\p_x^2w^1_\e-\p_y^2w^1_\e+\p_xq_\e=R^1_\e\ \ \mbox{in} \ \cS\times ]0,\infty[,\\
\displaystyle \e^2\left(\p_tw^2_\e-\e^2\p_x^2w^2_\e-\p_y^2w^2_\e\right)+\p_yq_\e=R^2_\e,\\
\displaystyle \p_xw^1_\e+\p_yw^2_\e=0,\\
\displaystyle \left(w^1_\e,w^2_\e\right)|_{y=0}=\left(w^1_\e,w^2_\e\right)|_{y=1}=0,\\
\displaystyle \left(w^1_\e, w^2_\e\right)|_{t=0}=\left(u_0^\e-u_0, v_0^\e-v_0\right),
\end{array}\right.
\end{equation}
where $v_0$ is determined from $u_0$ via $\p_xu_0+\p_yv_0=0$ and $v_0|_{y=0}=v_0|_{y=1}=0,$ and
\beq\label{S5eq2}
\begin{split}
&R^1_\e=\e^2\p_x^2u-\big(u^\e\p_xu^\e-u\p_xu\big)-\big(v^\e\p_yu^\e-v\p_yu\big),\\
&R^2_\e=-\e^2\big(\p_tv-\e^2\p_x^2v-\p_y^2v+u^\e\p_xv^\e+v^\e\p_yv^\e\big).
\end{split} \eeq
Let us define
\beq \label{theta} u_{\Th}(t,x,y)\eqdefa\cF_{\xi\to
x}^{-1}\bigl(e^{\Th(t,\xi)}\widehat{u}^(t,\xi,y)\bigr)\andf
\Th(t,\xi)\eqdefa \big(a-\mu\zeta(t)\big)|\xi|,
\eeq
where $\mu\ge \lambda$ will be determined later, and ${\zeta}(t)$ is given by
\begin{equation}\nonumber
{\zeta}(t)=\int_0^t\bigl(\bigl\|\left(\p_yu^\e_{\Psi}, \e \p_xu^\e_{\Psi}\right)(t')\bigr\|_{\cB^{\f12}}
+\|\p_yu_\Phi(t')\|_{\cB^{\f12}}\bigr)\,dt'.
\end{equation} 
Similar notation for $(w^1_\e)_\Th$ and so on.

It is easy to observe that if we take $c_0$ in \eqref{S1eq8} and $c_1$ in \eqref{S1eq8a} small enough, then $\Th(t)\ge 0$
and
\beno
\Th(t,\xi)\leq \min\left( \Psi(t,\xi), \Phi(t,\xi)\right).
\eeno
Thanks to Theorem \ref{th1.2}, we deduce that
\beq \label{S5eq3} \|u^\e_{\Psi}\|_{\wt{L}^\infty(\R^+;\cB^{\f12})}+
\|u_\Phi\|_{\wt{L}^\infty(\R^+;\cB^\f12\cap\cB^{\f52})}+\|\p_y u_\Phi\|_{\wt{L}^2(\R^+;\cB^\f12\cap\cB^{\f52})}+\|(\pa_tu)_\Phi\|_{\wt{L}^\infty(\R^+;\cB^{\f32})}\leq M,
\eeq
where $u^\e_\Psi$ and $u_\Phi$ are determined respectively  by \eqref{S3eq1} and 
\eqref{S4eq1} and $M\ge 1$ is a constant independent of  $\e$. \smallskip

In what follows, we shall neglect the subscript $\e$ in $(w^1_\e,w^2_\e).$

\begin{proof}[Proof of Theorem \ref{thm3}]
In view of \eqref{S5eq1}, we get, by using a similar derivation of \eqref{S3eq6}, that
\beq \label{S5eq6}
\begin{split}
\|\D_k^{\rm h}&(w^1_\Th, \e w^2_\Th)\|_{L^\infty_t(L^2)}^2+\mu 2^k\int_0^t\dot{\zeta}(t')\|\D_k^{\rm
h}(w^1_\Th,\e w^2_\Th)(t')\|_{L^2}^2\,dt'\\
&+\int_0^t\bigl(\|\D_k^\h\p_y (w^1_\Th,\e w^2_\Th)(t')\|_{L^2}^2+\e^2 2^{2k}\|\D_k^\h (w^1_\Th, \e w^2_\Th)(t')\|_{L^2}^2\bigr)\,dt'\\
\leq & \bigl\|e^{a|D_x|}\D_k^\h(u_0^\e-u_0,\e (v_0^\e-v_0))\bigr\|_{L^2}^2\\
&+\int_0^t\bigl|\bigl(\D_k^\h R^1_\Th | \D_k^\h w^1_\Th\bigr)_{L^2}\bigr|\,dt'
+\int_0^t\bigl|\bigl(\D_k^\h R^2_\Th | \D_k^\h w^2_\Th\bigr)_{L^2}\bigr|\,dt'.
\end{split}
\eeq
We now claim that
\beq \label{S5eq7}
\begin{split}
\int_0^t\bigl|\bigl(\D_k^\h R^1_\Th |& \D_k^\h w^1_\Th\bigr)_{L^2}\bigr|\,dt'\lesssim d_k^22^{-k}\Bigl(\e\| \p_yu_\Th\|_{\wt{L}^2_t(\cB^{\f32})}
\|\e w^1_\Th\|_{\wt{L}^2_t(\cB^{\f32})}\\
&+\|u_\Th\|^{\f12}_{L^\infty_t(\cB^{\f32})}
\|\p_yw^1_\Th\|_{\wt{L}^2_t(\cB^{\f12})}\|w^1_\Th\|_{\wt{L}^2_{t,\dot\zeta(t)}(\cB^{1})}+\|w^1_\Th\|_{\wt{L}^2_{t,\dot\zeta(t)}(\cB^{1})}^2\Bigr),
\end{split}
\eeq
and
\beq \label{S5eq8}
\begin{split}
\int_0^t\bigl|\bigl(\D_k^\h R^2_\Th | \D_k^\h& w^2_\Th\bigr)_{L^2}\bigr|\,dt'\lesssim  d_k^22^{-k}\Bigl\{\bigl\|\left(w^1_\Th, \e w^2_\Th\right)\bigr\|_{\wt{L}^2_{t,\dot\zeta(t)}(\cB^{1})}^2+\e^2
\bigl\|(\p_yw^2_\Th,\e\p_xw^2_\Th)\|_{\wt{L}^2_t(\cB^{\f12})}\\
&\times \Bigl(\|(\p_tu)_\Th\|_{\wt{L}^2_t(\cB^{\f32})}
+\| \p_yu_\Th\|_{\wt{L}^2_t(\cB^{\f32})}+
\e\| \p_yu_\Th\|_{\wt{L}^2_t(\cB^{\f52})}\Bigr)
\\
&+\e^2\|w^2_\Th\|_{\wt{L}^2_{t,\dot\zeta(t)}(\cB^{1})}\Bigl(\|w^2_\Th\|_{\wt{L}^2_{t,\dot\zeta(t)}(\cB^{1})}+\|u^\e_\Th\|^{\f12}_{L^\infty_t(\cB^{\f12})}\| \p_yu_\Th\|_{\wt{L}^2_t(\cB^{2})}\\
&\qquad\qquad\qquad\qquad\ \ +\|u_\Th\|^{\f12}_{L^\infty_t(\cB^{\f32})}\bigl(\| \p_yw^2_\Th\|_{\wt{L}^2_t(\cB^{\f12})}+\|\p_y u_\Th\|_{\wt{L}^2_t(\cB^{\f32})}\bigr)\Bigr)\Bigr\}.
\end{split}
\eeq

By virtue of \eqref{S5eq3}, \eqref{S5eq7} and \eqref{S5eq8}, we infer
\beno
\begin{split}
\sum_{i=1}^2\int_0^t\bigl|\bigl(\D_k^\h R^i_\Th | \D_k^\h w^i_\Th\bigr)_{L^2}\bigr|\,dt'\lesssim  & d_k^22^{-k}\Bigl(
M\e \bigl\|\bigl(\e\p_x(w^1_\Th,\e w^2_\Th),\e
\p_yw^2_\Th\bigr)\bigr\|_{\wt{L}^2_t(\cB^{\f12})}
\\
&+M^{\f12}\bigl\|\p_y(w^1_\Th,\e w^2_\Th)\bigr\|_{\wt{L}^2_t(\cB^{\f12})}\bigl\|(w^1_\Th,\e w^2_\Th)\bigr\|_{\wt{L}^2_{t,\dot\zeta(t)}(\cB^{1})}\\
&+M^{\f32}\e\|\e w^2_\Th\|_{\wt{L}^2_{t,\dot\zeta(t)}(\cB^{1})}+\bigl\|(w^1_\Th,\e w^2_\Th)\bigr\|_{\wt{L}^2_{t,\dot\zeta(t)}(\cB^{1})}^2\Bigr),
\end{split}
\eeno
from which and \eqref{S5eq6}, we deduce that
\beq\label{eq:error}
\begin{split}
&\|(w^1_\Th,\e w^2_\Th)\|_{\wt{L}^\infty_t(\cB^{\f12})}+\mu^{\f12}\|(w^1_\Th,\e w^2_\Th)\|_{\wt{L}^2_{t,\dot\zeta_1(t)}(\cB^{1})}
+\|\p_y(w^1_\Th,\e w^2_\Th)\|_{\wt{L}^2_t(\cB^{\f12})}\\
&+\e\|(w^1_\Th,\e w^2_\Th)\|_{\wt{L}^2_t(\cB^{\f32})}\leq C\bigl\|e^{a|D_x|}\left(u_0^\e-u_0,\e (v_0^\e-v_0)\right)\bigr\|_{\cB^{\f12}}\\
&\qquad\qquad\qquad\qquad\qquad\qquad+C\Bigl(
\sqrt{M\e} \bigl\|\bigl(\e\p_x(w^1_\Th,\e w^2_\Th),\e
\p_yw^2_\Th\bigr)\bigr\|_{\wt{L}^2_t(\cB^{\f12})}^{\f12}
\\
&\qquad\qquad\qquad\qquad\qquad\qquad+M^{\f14}\bigl\|\p_y(w^1_\Th,\e w^2_\Th)\bigr\|_{\wt{L}^2_t(\cB^{\f12})}^{\f12}\bigl\|(w^1_\Th,\e w^2_\Th)\bigr\|_{\wt{L}^2_{t,\dot\zeta(t)}(\cB^{1})}^{\f12}\\
&\qquad\qquad\qquad\qquad\qquad\qquad+M^{\f34}\e^{\f12}\|\e w^2_\Th\|_{\wt{L}^2_{t,\dot\zeta(t)}(\cB^{1})}^{\f12}+\bigl\|(w^1_\Th,\e w^2_\Th)\bigr\|_{\wt{L}^2_{t,\dot\zeta(t)}(\cB^{1})}\Bigr).
\end{split}
\eeq
Applying Young's inequality gives rise to
\beno
\begin{split}
&\|(w^1_\Th,\e w^2_\Th)\|_{\wt{L}^\infty_t(\cB^{\f12})}+\mu^{\f12}\|(w^1_\Th,\e w^2_\Th)\|_{\wt{L}^2_{t,\dot\zeta(t)}(\cB^{1})}
+\|\p_y(w^1_\Th,\e w^2_\Th)\|_{\wt{L}^2_t(\cB^{\f12})}\\
&\qquad+\e^2\|(w^1_\Th,\e w^2_\Th)\|_{\wt{L}^2_t(\cB^{\f32})}\\
&\leq C\Bigl(\bigl\|e^{a|D_x|}\left(u_0^\e-u_0,\e (v_0^\e-v_0)\right)\bigr\|_{\cB^{\f12}}+
M\bigl(\e+
\bigl\|(w^1_\Th,\e w^2_\Th)\bigr\|_{\wt{L}^2_{t,\dot\zeta(t)}(\cB^{1})}\bigr)\Bigr).
\end{split}
\eeno
Taking $\mu=C^2M^2$ leads to \eqref{S1eq14}. This completes the proof of the theorem. \end{proof}

Now let us present the proof of \eqref{S5eq7} and \eqref{S5eq8}.

\begin{proof}[Proof of \eqref{S5eq7}]
According \eqref{S5eq2}, we write
\beno
R^1_\e=\e^2\p_x^2u-\big(u^\e\p_xw^1+w^1\p_xu\big)-\big(v^\e\p_yw^1+w^2\p_yu\big).
\eeno

We first observe that
\beq \label{S5eq9}
\e^2\int_0^t\big|\big(\Delta_k^\h\p_x^2u_{\Th}|\Delta_k^\h w^1_\Th\big)_{L^2}\big|\,dt'\leq Cd_k^22^{-k}\e\|\pa_y u_\Th\|_{\wt{L}^2_{t}(\cB^{\f32})}\|\e w^1_\Th\|_{\wt{L}^2_{t}(\cB^{\f32})}.
\eeq

\noindent$\bullet$\underline{
The estimate of $\int_0^t\big|\big(\Delta_k^\h(u^\e\p_xw^1+w^1\p_xu)_\Th | \Delta_k^\h w^1_\Th\big)_{L^2}\big|\,dt'$.}

It follows from Lemma \ref{lem3.1} that
\beq\label{S5eq10}
\int_0^t\big|\big(\Delta_k^\h(u^\e\p_xw^1)_{\Th}|\Delta_k^\h w^1_\Th\big)_{L^2}\big|\,dt'\lesssim d_k^22^{-k} \|w^1_\Th\|_{\wt{L}^2_{t,\dot{\zeta}(t)}(\cB^{1})}^2.
\eeq
By applying Bony's decomposition \eqref{Bony}   for the horizontal variable to $w^1\p_xu$, we obtain
\beno
w^1\p_xu=T^\h_{w^1}\p_xu+T^\h_{\p_xu}w^1+R^h(w^1,\p_xu).
\eeno
Notice that
\beno
\|\D_{k'}^\h\p_x u_\Th(t')\|_{L^2_\h(L^\infty_{\rm v})}\lesssim d_{k'}(t)\|u_\Th(t')\|_{\cB^{\f32}}^{\f12}\|\p_y u_\Th(t')\|_{\cB^{\f12}}^{\f12},
\eeno
we infer
\beno
\begin{split}
\int_0^t&\bigl|\bigl(\D_k^{\rm h}(T^\h_{w^1}\p_xu)_\Th\ |\ \D_k^{\rm
h}w_\Th^1\bigr)_{L^2}\bigr|\,dt'\\
\lesssim & \sum_{|k'-k|\leq 4}\int_0^t\|S_{k'-1}^\h w^1_\Th(t')\|_{L^\infty_\h(L^2_{\rm v})}
\|\D_{k'}^\h\p_xu_\Th(t')\|_{L^2_\h(L^\infty_{\rm v})}\|\D_k^\h w^1_\Th(t')\|_{L^2}\,dt'\\
\lesssim & \sum_{|k'-k|\leq 4}d_{k'}\|u_\Th\|_{L^\infty_t(\cB^{\f32})}^{\f12}\|S_{k'-1}^\h w^1_\Th\|_{L^2_t(L^\infty_\h(L^2_{\rm v}))}\Bigl(\int_0^t
\|\p_y u_\Th(t')\|_{\cB^{\f12}}\|\D_k^\h w^1_\Th(t')\|_{L^2}^2\,dt'\Bigr)^{\f12}\\
\lesssim & d_k^22^{-k}\|u_\Th\|^{\f12}_{L^\infty_t(\cB^{\f32})}
\|\p_yw^1_\Th\|_{\wt{L}^2_t(\cB^{\f12})}\|w^1_\Th\|_{\wt{L}^2_{t,\dot\zeta(t)}(\cB^{1})}.
\end{split}
\eeno
While observing that
\beno
\begin{split}
\|S_{k'-1}^\h\p_xu_\Th(t')\|_{L^\infty}\lesssim &\sum_{\ell\leq k'-2}2^{\f{3\ell}2}\|\D_\ell^\h u_\Th(t')\|_{L^2}^{\f12}\|\D_\ell^\h \p_y u_\Th(t')\|_{L^2}^{\f12}\\
\lesssim& d_{k'}(t)2^{k'}\|\p_y u_\Th(t')\|_{\cB^{\f12}},
\end{split}
\eeno
we deduce
\beno
\begin{split}
\int_0^t&\bigl|\bigl(\D_k^{\rm h}(T^\h_{\p_xu}w^1)_\Th\ |\ \D_k^{\rm
h}w^1_\Th\bigr)_{L^2}\bigr|\,dt'\\
\lesssim & \sum_{|k'-k|\leq 4}\int_0^t\|S_{k'-1}^\h \p_xu_\Th(t')\|_{L^\infty}
\|\D_{k'}^\h w^1_\Th(t')\|_{L^2}\|\D_k^\h w^1_\Th(t')\|_{L^2}\,dt'\\
\lesssim & \sum_{|k'-k|\leq 4}2^{{k'}}\Bigl(\int_0^t\|\D_{k'}^\h w_\Th^1(t')\|_{L^2}^2\|\p_yu_\Th(t')\|_{\cB^{\f12}}\,dt'\Bigr)^{\frac12}\\
&\qquad\qquad\qquad\qquad\qquad\times \Bigl(\int_0^t\|\D_k^\h w^1_\Th(t')\|_{L^2}^2\|\p_yu_\Th(t')\|_{\cB^{\f12}}\,dt'\Bigr)^{\frac12}\\
\lesssim & d_k^22^{-k}\|w^1_\Th\|_{\wt{L}^2_{t,\dot\zeta(t)}(\cB^{1})}^2.
\end{split}
\eeno
Along the same line, we have
\beno
\begin{split}
\int_0^t\bigl|&\bigl(\D_k^{\rm h}(R^\h(w^1,\p_xu)_\Th\ |\ \D_k^{\rm
h}w^1_\Th\bigr)_{L^2}\bigr|\,dt'\\
\lesssim &2^{\f{k}2}\sum_{k'\geq k-3}\int_0^t\|{\D}_{k'}^\h w^1_\Th(t')\|_{L^2}\|\wt{\D}_{k'}^\h\p_x u_\Th(t')\|_{L^2_\h(L^\infty_{\rm v})}\|\D_k^\h w^1_\Th(t')\|_{L^2}\,dt'\\
\lesssim & 2^{\f{k}2}\sum_{k'\geq k-3}2^{\f{k'}2}\Bigl(\int_0^t\|{\D}_{k'}^\h  w^1_\Th(t')\|_{L^2}^2\|\p_yu_\Th(t')\|_{\cB^{\f12}}\,dt'\Bigr)^{\f12}
\\
&\qquad\qquad\qquad\qquad\times \Bigl(\int_0^t\|\D_k^\h w^1_\Th(t')\|_{L^2}^2\|\p_yu_\Th(t')\|_{\cB^{\f12}}\,dt'\Bigr)^{\f12}\\
\lesssim &d_k^22^{-k}\|w^1_\Th\|_{\wt{L}^2_{t,\dot{\zeta}(t)}(\cB^{1})}^2.
\end{split}
\eeno
As a result, it comes out
\beq \label{S5eq11}
\begin{split}
\int_0^t\bigl|\bigl(\D_k^{\rm h}&({w^1}\p_xu)_\Th\ |\ \D_k^{\rm
h}w_\Th^1\bigr)_{L^2}\bigr|\,dt'\\
\lesssim &d_k^22^{-k}\|w_\Th^1\|_{\wt{L}^2_{t,\dot{\zeta}(t)}(\cB^{1})}\Bigl(\|w^1_\Th\|_{\wt{L}^2_{t,\dot{\zeta}(t)}(\cB^{1})}
+\|u_\Th\|^{\f12}_{L^\infty_t(\cB^{\f32})}
\|\p_yw^1_\Th\|_{\wt{L}^2_t(\cB^{\f12})}\Bigr).
\end{split}
\eeq

\noindent$\bullet$\underline{
The estimate of $\int_0^t\big|\big(\Delta_k^\h(v^\e\p_yw^1)_\Th | \Delta_k^\h w^1_\Th\big)_{L^2}\big|\,dt'$.}

We write
\beno
v^\e\p_y w^1=w^2\p_yw^1+v\p_yw^1.
\eeno
We first deduce from Lemma \ref{lem3.2} that
\beq\label{S5eq11a}
\int_0^t\big|\big(\Delta_k^\h(w^2\p_yw^1)_\Th | \Delta_k^\h w^1_\Th\big)_{L^2}\big|\,dt'\lesssim d_k^22^{-k}\|w_\Th^1\|_{\wt{L}^2_{t,\dot{\zeta}(t)}(\cB^{1})}^2.
\eeq

Whereas by applying Bony's decomposition \eqref{Bony}   for the horizontal variable to $v\p_xw^1$, we find
\beno
v\p_y w^1=T^\h_{v}\p_y w^1+T^\h_{\p_yw^1}v+R^h(v,\p_y w^1).
\eeno
It follows from \eqref{S3eq8} that
\beno
\begin{split}
\|S_{k'-1}^\h v_\Th(t')\|_{L^\infty}\lesssim& \sum_{\ell\leq k'-2}2^{\f{3\ell}2}\|\D_{\ell}^\h u_\Th(t')\|_{L^2}^{\f12}
\|\D_{\ell}^\h \p_y u_\Th(t')\|_{L^2}^{\f12}\\
\lesssim &
d_{k'}(t)2^{\f{k'}2}\|u_\Th(t')\|_{\cB^{\f32}}^{\f12}\|\p_y u_\Th(t')\|_{\cB^{\f12}}^{\f12},
\end{split}
\eeno
from which, we infer
\beno
\begin{split}
\int_0^t&\bigl|\bigl(\D_k^{\rm h}(T^\h_{v}\p_y w^1)_\Th\ |\ \D_k^{\rm
h}w_\Th^1\bigr)_{L^2}\bigr|\,dt'\\
\lesssim & \sum_{|k'-k|\leq 4}\int_0^t\|S_{k'-1}^\h v_\Th(t')\|_{L^\infty}
\|\D_{k'}^\h\p_y w^1_\Th(t')\|_{L^2}\|\D_k^\h w^1_\Th(t')\|_{L^2}\,dt'\\
\lesssim & \sum_{|k'-k|\leq 4}2^{\f{k'}2}\|u_\Th\|_{L^\infty_t(\cB^{\f32})}^{\f12}\|\D_{k'}^\h\p_y w^1_\Th(t')\|_{L^2_t(L^2)}
\Bigl(\int_0^t
\|\p_y u_\Th(t')\|_{\cB^{\f12}}\|\D_k^\h w^1_\Th(t')\|_{L^2}^2\,dt'\Bigr)^{\f12}\\
\lesssim & d_k^22^{-k}\|u_\Th\|^{\f12}_{L^\infty_t(\cB^{\f32})}
\|\p_yw^1_\Th\|_{\wt{L}^2_t(\cB^{\f12})}\|w^1_\Th\|_{\wt{L}^2_{t,\dot\zeta(t)}(\cB^{1})}.
\end{split}
\eeno
Whereas thanks to \eqref{S3eq11},
we get
\beno
\begin{split}
\int_0^t&\bigl|\bigl(\D_k^{\rm h}(T^\h_{\p_yw^1}v)_\Th\ |\ \D_k^{\rm
h}w^1_\Th\bigr)_{L^2}\bigr|\,dt'\\
\lesssim & \sum_{|k'-k|\leq 4}\int_0^t\|S_{k'-1}^\h \p_yw^1_\Th(t')\|_{L^\infty_\h(L^2_{\rm v})}
\|\D_{k'}^\h v_\Th(t')\|_{L^2_\h(L^\infty_{\rm v})}\|\D_k^\h w^1_\Th(t')\|_{L^2}\,dt'\\
\lesssim & \sum_{|k'-k|\leq 4}d_{k'}\|S_{k'-1}^\h \p_yw^1_\Th\|_{L^2_t(L^\infty_\h(L^2_{\rm v}))}\|u_\Th\|^{\f12}_{L^\infty_t(\cB^{\f32})}
 \Bigl(\int_0^t\|\D_k^\h w^1_\Th(t')\|_{L^2}^2\|\p_yu_\Th(t')\|_{\cB^{\f12}}\,dt'\Bigr)^{\frac12}\\
\lesssim &d_k^22^{-k}\|u_\Th\|^{\f12}_{L^\infty_t(\cB^{\f32})}
\|\p_yw^1_\Th\|_{\wt{L}^2_t(\cB^{\f12})}\|w^1_\Th\|_{\wt{L}^2_{t,\dot\zeta(t)}(\cB^{1})}.
\end{split}
\eeno
Along the same line, we obtain
\beno
\begin{split}
\int_0^t\bigl|&\bigl(\D_k^{\rm h}(R^\h(v, \p_yw^1))_\Th\ |\ \D_k^{\rm
h}w^1_\Th\bigr)_{L^2}\bigr|\,dt'\\
\lesssim &2^{\f{k}2}\sum_{k'\geq k-3}\int_0^t\|{\D}_{k'}^\h v_\Th(t')\|_{L^\infty_\h(L^2_{\rm v})}\|\wt{\D}_{k'}^\h\p_y w^1_\Th(t')\|_{L^2}\|\D_k^\h w^1_\Th(t')\|_{L^2}\,dt'\\
\lesssim & 2^{\f{k}2}\sum_{k'\geq k-3}\|u_\Th\|^{\f12}_{L^\infty_t(\cB^{\f32})}
\|\wt{\D}_{k'}^\h\p_y w^1_\Th\|_{L^2_t(L^2)} \Bigl(\int_0^t\|\D_k^\h w^1_\Th(t')\|_{L^2}^2\|\p_yu_\Th(t')\|_{\cB^{\f12}}\,dt'\Bigr)^{\frac12}\\
\lesssim &d_k^22^{-k}\|u_\Th\|^{\f12}_{L^\infty_t(\cB^{\f32})}
\|\p_yw^1_\Th\|_{\wt{L}^2_t(\cB^{\f12})}\|w^1_\Th\|_{\wt{L}^2_{t,\dot\zeta(t)}(\cB^{1})}.
\end{split}
\eeno
As a consequence, we arrive at
\beq \label{S5eq12}
\begin{split}
\int_0^t\bigl|\bigl(\D_k^{\rm h}&({v}\p_yw^1)_\Th\ |\ \D_k^{\rm
h}w_\Th^1\bigr)_{L^2}\bigr|\,dt'
\lesssim d_k^22^{-k}\|u_\Th\|^{\f12}_{L^\infty_t(\cB^{\f32})}
\|\p_yw^1_\Th\|_{\wt{L}^2_t(\cB^{\f12})}\|w^1_\Th\|_{\wt{L}^2_{t,\dot\zeta(t)}(\cB^{1})}.
\end{split}
\eeq

\noindent$\bullet$\underline{
The estimate of $\int_0^t\big|\big(\Delta_k^\h(w^2\p_yu)_\Th | \Delta_k^\h w^1_\Th\big)_{L^2}\big|\,dt'$.}

By applying Bony's decomposition \eqref{Bony}   for the horizontal variable to $w^2\p_yu$, we write
\beno
w^2\p_y u=T^\h_{w^2}\p_yu+T^\h_{\p_yu}w^2+R^h(w^2,\p_yu).
\eeno
In view of  \eqref{S3eq12}, we have
\beno
\begin{split}
\Bigl(\int_0^t&\|S_{k'-1}^\h w^2_\Th(t')\|_{L^\infty}^2\|\p_yu_\Th(t')\|_{\cB^{\f12}}\,dt'\Bigr)^{\f12}
\lesssim d_{k'} 2^{\f{k'}2}\|w^1_\Th\|_{\wt{L}^2_{t,\dot{\zeta}(t)}(\cB^{1})},
\end{split}
\eeno
so that we get, by applying H\"older's inequality, that
\beno
\begin{split}
\int_0^t&\bigl|\bigl(\D_k^{\rm h}(T^\h_{w^2}\p_y u)_\Th\ |\ \D_k^{\rm
h}w_\Th^1\bigr)_{L^2}\bigr|\,dt'\\
\lesssim & \sum_{|k'-k|\leq 4}2^{-\f{k'}2}\int_0^t\|S_{k'-1}^\h w^2_\Th(t')\|_{L^\infty}\|\p_yu_\Th(t')\|_{\cB^{\f12}}\|\D_k^\h w^1_\Th(t')\|_{L^2}\,dt'\\
\lesssim & \sum_{|k'-k|\leq 4}2^{-\f{k'}2}\Bigl(\int_0^t\|S_{k'-1}^\h w^2_\Th(t')\|_{L^\infty}^2\|\p_yu_\Th(t')\|_{\cB^{\f12}}\,dt'\Bigr)^{\f12}\\
&\qquad\qquad\qquad\times\Bigl(\int_0^t\|\D_k^\h w^1_\Th(t')\|_{L^2}^2\|\p_yu_\Th(t')\|_{\cB^{\f12}}\,dt'\Bigr)^{\f12}\\
\lesssim & d_k^22^{-k}\|w^1_\Th\|_{\wt{L}^2_{t,\dot\zeta(t)}(\cB^{1})}^2.
\end{split}
\eeno
While thanks to \eqref{S3eq11}, we find
\beno
\begin{split}
\int_0^t&\bigl|\bigl(\D_k^{\rm h}(T^\h_{\p_yu}w^2)_\Th\ |\ \D_k^{\rm
h}w^1_\Th\bigr)_{L^2}\bigr|\,dt'\\
\lesssim & \sum_{|k'-k|\leq 4}\int_0^t\|S_{k'-1}^\h \p_yu_\Th(t')\|_{L^\infty_\h(L^2_{\rm v})}
\|\D_{k'}^\h w^2_\Th(t')\|_{L^2_\h(L^\infty_{\rm v})}\|\D_k^\h w^1_\Th(t')\|_{L^2}\,dt'\\
\lesssim & \sum_{|k'-k|\leq 4}2^{k'}\int_0^t\|\p_yu_\Th(t')\|_{\cB^{\f12}}\|\D_{k'}^\h w^1_\Th(t')\|_{L^2}\|\D_k^\h w^1_\Th(t')\|_{L^2}\,dt'\\
\lesssim &d_k^22^{-k}\|w^1_\Th\|_{\wt{L}^2_{t,\dot\zeta(t)}(\cB^{1})}^2.
\end{split}
\eeno
Along the same line, we obtain
\beno
\begin{split}
\int_0^t\bigl|&\bigl(\D_k^{\rm h}(R^\h(w^2, \p_y u))_\Th\ |\ \D_k^{\rm
h}w^1_\Th\bigr)_{L^2}\bigr|\,dt'\\
\lesssim &2^{\f{k}2}\sum_{k'\geq k-3}\int_0^t\|{\D}_{k'}^\h w^2_\Th(t')\|_{L^2_\h(L^\infty_{\rm v})}
\|\wt{\D}_{k'}^\h\p_y u_\Th(t')\|_{L^2}\|\D_k^\h w^1_\Th(t')\|_{L^2}\,dt'\\
\lesssim & 2^{\f{k}2}\sum_{k'\geq k-3}2^{\f{k'}2}\int_0^t\|{\D}_{k'}^\h w^1_\Th(t')\|_{L^2}\|\p_yu_\Th(t')\|_{\cB^{\f12}}\|\D_k^\h w^1_\Th(t')\|_{L^2}\,dt'\\
\lesssim &d_k^22^{-k}\|w^1_\Th\|_{\wt{L}^2_{t,\dot\zeta(t)}(\cB^{1})}^2.
\end{split}
\eeno
This gives rise to
\beq \label{S5eq13}
\begin{split}
\int_0^t\bigl|\bigl(\D_k^{\rm h}&(w^2\p_yu)_\Th\ |\ \D_k^{\rm
h}w_\Th^1\bigr)_{L^2}\bigr|\,dt'
\lesssim d_k^22^{-k}\|w^1_\Th\|_{\wt{L}^2_{t,\dot\zeta(t)}(\cB^{1})}^2.
\end{split}
\eeq

By summing up (\ref{S5eq9}-\ref{S5eq13}), we conclude the proof of \eqref{S5eq7}.
\end{proof}

\begin{proof}[Proof of \eqref{S5eq8}] We first observe from $\p_xu+\p_yv=0$ and Poincare inequality that
\beq \label{S5eq14}
\begin{split}
\e^2\int_0^t\big|\big(\Delta_k^\h (\p_tv)_{\Th}|\Delta_k^\h w^2_\Th\big)_{L^2}\big|dt'\lesssim&\e^2d_k^22^{-k}\|(\p_tu)_\Th\|_{\wt{L}^2_{t}(\cB^{\f32})}\|\p_yw^2_\Th\|_{\wt{L}^2_{t}(\cB^{\f12})},\\
\e^2\int_0^t\big|\big(\Delta_k^\h (\p_y^2v)_{\Th}|\Delta_k^\h w^2_\Th\big)_{L^2}\big|dt'\lesssim& \e^2d_k^22^{-k}\|\p_yu_\Th\|_{\wt{L}^2_{t}(\cB^{\f32})}\|\p_yw^2_\Th\|_{\wt{L}^2_{t}(\cB^{\f12})},\\
\e^4\int_0^t\big|\big(\Delta_k^\h (\p_x^2v)_{\Th}|\Delta_k^\h w^2_\Th\big)_{L^2}\big|dt'\lesssim& \e^4d_k^22^{-k}\|\p_yu_\Th\|_{\wt{L}^2_{t}(\cB^{\f52})}\|w^2_\Th\|_{\wt{L}^2_{t}(\cB^{\f32})}.
\end{split}
\eeq

\noindent$\bullet$\underline{
The estimate of $\int_0^t\big|\big(\Delta_k^\h(u^\e\p_xv^\e)_\Th | \Delta_k^\h w^1_\Th\big)_{L^2}\big|\,dt'$.}

We write $$ u^\e\p_xv^\e=u^\e\p_xw^2+u^\e\p_xv.$$
It follows from Lemma \ref{lem3.1} that
\beq\label{S5eq17}
\int_0^t\big|\big(\Delta_k^\h(u^\e\p_x w^2)_{\Th}|\Delta_k^\h w^2_\Th\big)_{L^2}\big|\,dt'\lesssim d_k^22^{-k} \|w^2_\Th\|_{\wt{L}^2_{t,\dot{\zeta}(t)}(\cB^{1})}^2.
\eeq
By applying Bony's decomposition for the horizontal variable to $u^\e\p_x v$  gives
\beno u^\e\p_x v=T^\h_{u^\e}\p_x v+T^\h_{\p_x v}{u^\e}+R^\h({u^\e},\p_xv).
\eeno
Due to $$ \|S_{k'-1}^\h u^\e_\Th(t')\|_{L^\infty}\lesssim \|u^\e_\Th(t')\|_{\cB^{\f12}}^{\f12}\|\p_y u^\e_\Th(t')\|_{\cB^{\f12}}^{\f12},$$
and \eqref{S3eq11}, we have
\beno
\begin{split}
\int_0^t&\bigl|\bigl(\D_k^{\rm h}(T^\h_{u^\e}\p_x v)_\Th\ |\ \D_k^{\rm
h}w_\Th^2\bigr)_{L^2}\bigr|\,dt'\\
\lesssim & \sum_{|k'-k|\leq 4}\int_0^t\|S_{k'-1}^\h u^\e_\Th(t')\|_{L^\infty}
\|\D_{k'}^\h\p_x v_\Th(t')\|_{L^2}\|\D_k^\h w^2_\Th(t')\|_{L^2}\,dt'\\
\lesssim & \sum_{|k'-k|\leq 4}2^{2{k'}}\|u^\e_\Th\|_{L^\infty_t(\cB^{\f12})}^{\f12}\|\D_k^\h u_\Th\|_{L^2_t(L^2)}
\Bigl(\int_0^t\|\p_y u^\e_\Th(t')\|_{\cB^{\f12}}\|\D_k^\h w^2_\Th(t')\|_{L^2}^2\,dt'\Bigr)^{\f12}\\
\lesssim & d_k^22^{-k}\|u^\e_\Th\|_{L^\infty_t(\cB^{\f12})}^{\f12}\|\p_y u_\Th\|_{\wt{L}_t^2(\cB^2)}\|w^1_\Th\|_{\wt{L}^2_{t,\dot\zeta(t)}(\cB^{1})}^2.
\end{split}
\eeno
While again thanks to \eqref{S3eq11}, we find
\beno
\|S_{k'-1}^\h\p_xv_\Th(t')\|_{L^\infty}\lesssim 2^{\f{k'}2}\|\p_y u_\Th(t')\|_{\cB^{2}},
\eeno
which leads to
\beno
\begin{split}
\int_0^t&\bigl|\bigl(\D_k^{\rm h}(T^\h_{\p_xv} u^\e)_\Th\ |\ \D_k^{\rm
h}w^2_\Th\bigr)_{L^2}\bigr|\,dt'\\
\lesssim & \sum_{|k'-k|\leq 4}\int_0^t\|S_{k'-1}^\h \p_xv_\Th(t')\|_{L^\infty}
\|\D_{k'}^\h u^\e_\Th(t')\|_{L^2}\|\D_k^\h w^2_\Th(t')\|_{L^2}\,dt'\\
\lesssim & \sum_{|k'-k|\leq 4}d_{k'}\|\p_yu_\Th\|_{\wt{L}^2_t(\cB^{2})}\|u^\e\|_{L^\infty_t(\cB^{\f12})}^{\f12}\Bigl(\int_0^t\|\p_y u^\e_\Th(t')\|_{\cB^{\f12}}\|\D_k^\h w^2_\Th(t')\|_{L^2}^2\,dt'\Bigr)^{\f12}\\
\lesssim & d_k^22^{-k}\|u^\e_\Th\|_{L^\infty_t(\cB^{\f12})}^{\f12}\|\p_y u_\Th\|_{\wt{L}_t^2(\cB^2)}\|w^1_\Th\|_{\wt{L}^2_{t,\dot\zeta(t)}(\cB^{1})}^2.
\end{split}
\eeno
Along the same line, we obtain
\beno
\begin{split}
\int_0^t\bigl|&\bigl(\D_k^{\rm h}(R^\h(u^\e, \p_xv))_\Th\ |\ \D_k^{\rm
h}w^2_\Th\bigr)_{L^2}\bigr|\,dt'\\
\lesssim &2^{\f{k}2}\sum_{k'\geq k-3}\int_0^t\|{\D}_{k'}^\h u^\e_\Th(t')\|_{L^2_\h(L^\infty_{\rm v})}
\|\wt{\D}_{k'}^\h\p_xv_\Th(t')\|_{L^2}\|\D_k^\h w^1_\Th(t')\|_{L^2}\,dt'\\
\lesssim & 2^{\f{k}2}\sum_{k'\geq k-3}2^{\f{3k'}2}\|u^\e_\Th\|_{L^\infty_t(\cB^{\f12})}^{\f12}\|\D_k^\h u_\Th\|_{L^2_t(L^2)}
\Bigl(\int_0^t\|\p_y u^\e_\Th(t')\|_{\cB^{\f12}}\|\D_k^\h w^2_\Th(t')\|_{L^2}^2\,dt'\Bigr)^{\f12}\\
\lesssim & d_k^22^{-k}\|u^\e_\Th\|_{L^\infty_t(\cB^{\f12})}^{\f12}\|\p_y u_\Th\|_{\wt{L}_t^2(\cB^2)}\|w^2_\Th\|_{\wt{L}^2_{t,\dot\zeta(t)}(\cB^{1})}.
\end{split}
\eeno
This gives rise to
\beq \label{S5eq18}
\begin{split}
\int_0^t\bigl|\bigl(\D_k^{\rm h}&(u^\e\p_xv)_\Th\ |\ \D_k^{\rm
h}w_\Th^2\bigr)_{L^2}\bigr|\,dt'
\lesssim d_k^22^{-k}\|u^\e_\Th\|_{L^\infty_t(\cB^{\f12})}^{\f12}\|\p_y u_\Th\|_{\wt{L}_t^2(\cB^2)}\|w^2_\Th\|_{\wt{L}^2_{t,\dot\zeta(t)}(\cB^{1})}.
\end{split}
\eeq

\noindent$\bullet$\underline{
The estimate of $\int_0^t\big|\big(\Delta_k^\h(v^\e\p_yv^\e)_\Th | \Delta_k^\h w^1_\Th\big)_{L^2}\big|\,dt'$.}

We first note that
\beno
v^\e\p_y v^\e=v\p_yw^2+w^2\p_yw^2+v\p_yv+w^2\p_yv.\eeno
We first deduce Lemma \ref{lem3.3} that
\begin{align*}
\e^2\int_0^t\bigl|\bigl(\D_k^{\rm h}(w^2\p_yw^2)_\Th\ |&\ \D_k^{\rm
h}w_\Th^2\bigr)_{L^2}\bigr|\,dt'
\lesssim d_k^22^{-k}
\bigl\|(w^1_\Th,\e w^2_\Th)\bigr\|_{\wt{L}^2_{t,\dot\zeta(t)}(\cB^{1})}^2.
\end{align*}
It follows from \eqref{S5eq12} that
\begin{align*}
\int_0^t\bigl|\bigl(\D_k^{\rm h}({v}\p_yw^2)_\Th\ |&\ \D_k^{\rm
h}w_\Th^2\bigr)_{L^2}\bigr|\,dt'
\lesssim d_k^22^{-k}
\|u_\Th\|^{\f12}_{L^\infty_t(\cB^{\f32})}
\|\p_yw^2_\Th\|_{\wt{L}^2_t(\cB^{\f12})}\|w^2_\Th\|_{\wt{L}^2_{t,\dot\zeta(t)}(\cB^{1})}.
\end{align*}
And \eqref{S5eq11} ensures that
\begin{align*}
\int_0^t\bigl|\bigl(\D_k^{\rm h}&({w^2}\p_xu)_\Th\ |\ \D_k^{\rm
h}w_\Th^2\bigr)_{L^2}\bigr|\,dt'\\
\lesssim &d_k^22^{-k}\|w_\Th^2\|_{\wt{L}^2_{t,\dot{\zeta}(t)}(\cB^{1})}\Bigl(\|w^2_\Th\|_{\wt{L}^2_{t,\dot{\zeta}(t)}(\cB^{1})}
+\|u_\Th\|^{\f12}_{L^\infty_t(\cB^{\f32})}
\|\p_yw^2_\Th\|_{\wt{L}^2_t(\cB^{\f12})}\Bigr).
\end{align*}
We deduce from the proof of \eqref{S5eq12} that
\begin{align*}
\int_0^t\bigl|\bigl(\D_k^{\rm h}({v}\p_y v)_\Th\ |\ \D_k^{\rm
h}w_\Th^2\bigr)_{L^2}\bigr|\,dt'
\lesssim & d_k^22^{-k}\|u_\Th\|^{\f12}_{L^\infty_t(\cB^{\f32})}
\|\p_yv_\Th\|_{\wt{L}^2_t(\cB^{\f12})}\|w^2_\Th\|_{\wt{L}^2_{t,\dot\zeta(t)}(\cB^{1})}\\
\lesssim & d_k^22^{-k}\|u_\Th\|^{\f12}_{L^\infty_t(\cB^{\f32})}
\|\p_y u_\Th\|_{\wt{L}^2_t(\cB^{\f32})}\|w^2_\Th\|_{\wt{L}^2_{t,\dot\zeta(t)}(\cB^{1})}.
\end{align*}
As a result, it comes out
\beq \label{S5eq21}
\begin{split}
\e^2\int_0^t\big|\big(&\Delta_k^\h(v^\e\p_yv^\e)_\Th | \Delta_k^\h w^1_\Th\big)_{L^2}\big|\,dt'
\lesssim d_k^22^{-k}\Bigl(
\bigl\|(w^1_\Th,\e w^2_\Th)\bigr\|_{\wt{L}^2_{t,\dot\zeta(t)}(\cB^{1})}^2 \\
&+\e^2\|u_\Th\|^{\f12}_{L^\infty_t(\cB^{\f32})}\bigl(
\|\p_yw^2_\Th\|_{\wt{L}^2_t(\cB^{\f12})}+\|\p_y u_\Th\|_{\wt{L}^2_t(\cB^{\f32})}\bigr)\|w^2_\Th\|_{\wt{L}^2_{t,\dot\zeta(t)}(\cB^{1})}\Bigr).
\end{split}
\eeq

Summing up (\ref{S5eq14}-\ref{S5eq21}) gives rise to \eqref{S5eq8}.
\end{proof}

\section*{Acknowledgments}

 Part of this work was done when Marius Paicu was visiting the Chinese Academy of Sciences and the Peking University  in June  2018. We appreciate the hospitality and the financial support of these institutions. M. Paicu was also partially supported by the Agence Nationale de la Recherche, Project IFSMACS, grant ANR-15-CE40-0010. P. Zhang is partially supported
by NSF of China under Grants   11371347 and 11688101, and innovation grant from National Center for
Mathematics and Interdisciplinary Sciences. Z. Zhang is partially supported by NSF of China under Grant 11425103.

\bigskip

\end{document}